\documentclass[12pt, reqno]{amsart}

\usepackage[margin=2.4cm]{geometry}

\usepackage{amsmath,amsfonts,amssymb,amsthm,mathrsfs,microtype}
\usepackage{bbm}
\usepackage{hyperref,graphicx}
\usepackage{color}
\usepackage{cite}

\theoremstyle{plain}
\newtheorem{theorem}{Theorem}[section]
\newtheorem{lemma}[theorem]{Lemma}
\newtheorem{corollary}[theorem]{Corollary}

\theoremstyle{definition}

\newtheorem{remark}[theorem]{Remark}
\newtheorem{notation}[theorem]{Notation}

\DeclareMathOperator{\dimh}{dim_H}

\DeclareMathOperator{\T}{\mathbb{T}}
\newcommand{\modulo}{\ (\mathrm{mod}\ 1)}

\allowdisplaybreaks[3]

\begin{document}

\title[A note on uniform random covering problems]{A note on uniform random covering problems in metric spaces}

\author{Zhang-nan Hu}
\address{Z.-N.~Hu, College of Science, China University of Petroleum, Beijing 102249, P. R. China}
\email{hnlgdxhzn@163.com}

\author{Bing Li*}\thanks{* Corresponding author} 
\address{B. Li, School of Mathematics, South China University of Technology, Guangzhou, 510641, P. R. China}
\email{scbingli@scut.edu.cn}
	
\author{YiJing Wang}
\address{Y.-J. Wang, School of Mathematics, South China University of Technology, Guangzhou, 510641, P. R. China}
\email{1915317901@qq.com}
%王怡静

\date{\today}

\subjclass[2020]{}
	
\setlength{\footskip}{18pt}

\begin{abstract}
In this paper, we study the uniform random covering problem in general metric space $(X,d)$. Let $\omega=(\omega_n)_{n\in\mathbb N}$ be a sequence of independent identically distributed random variables on $(X,\mu)$, and $\ell=(l_n)_{n\in\mathbb N}$ a sequence of positive real numbers. We analyze  the size of the set
\[\mathcal{U}(\omega,\ell)=\left\{y\in X\colon \forall N\gg1,~\exists 1\le n\le N,~s.t. ~d(\omega_n,y)<\ell_N\right\},\]
and establish the 0-1 law for the Hausdorff dimension of $\mathcal{U}(\omega,\ell)$, its measure and the event $\mathcal{U}(\omega,\ell)=X$. Some sufficient conditions are provided  for  $\mathcal{U}(\omega,\ell)$ to have full measure or be countable almost surely. Furthermore, we employ the local dimension of $\mu$ to estimate the Hausdorff dimension of $\mathcal{U}(\omega,\ell)$. %The uniform random covering problem in the   torus $\mathbb{T}$  has previously been studied by  Koivusalo, Liao and Persson[Int. Math. Res. Not. 2023 (2023) 455–481].
 While prior work by  Koivusalo, Liao and Persson ( Int. Math. Res. Not. 2023) addressed the case of the torus $\mathbb{T}$, we apply our results to the $d$-dimension  torus $\mathbb{T}^d$, and explicit analysis of the Hausdorff dimension in a critical case is given.
\end{abstract}

\maketitle

\section{Introduction}
The classical problem of Diophantine approximation originates in theory of rational approximations to real numbers. A well-known result in this area is Dirichlet's theorem, established by  Dirichlet (1842):
\begin{theorem}[Dirichlet's theorem]
For real numbers  $\xi$ and $N > 1$, there exists an integer with $1\le n<N$, such that
\[\Vert n\xi \Vert\le N^{-1},\]
where $\Vert\cdot\Vert$ denotes the distance to the nearest integer.
\end{theorem}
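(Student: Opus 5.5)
The natural route is the pigeonhole principle applied to the fractional parts of the multiples of $\xi$. Write $\{x\}=x-\lfloor x\rfloor\in[0,1)$ for the fractional part. First reduce to the case where $N$ is an integer. The statement should be read with $N$ an integer $\ge 2$, since for non-integer $N$ the range $1\le n<N$ is $1\le n\le\lfloor N\rfloor$ (or $\lceil N\rceil-1$) and the bound $N^{-1}$ is what must be checked. I will first prove the integer version. If $\lfloor N\rfloor<N$, the integer version only gives $\Vert n\xi\Vert\le 1/\lfloor N\rfloor$, which is weaker than $N^{-1}$. So for non-integer $N$ I will run the same pigeonhole argument with boxes of length $1/N$ (see the last paragraph).

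For integer $N\ge2$, consider the $N+1$ numbers
\[0,\ \{\xi\},\ \{2\xi\},\ \dots,\ \{(N-1)\xi\},\ 1 .\]
These are the $N$ points $\{k\xi\}$ for $0\le k\le N-1$, together with the point $1$, and all lie in $[0,1]$. Partition $[0,1]$ into the $N$ closed intervals $[j/N,(j+1)/N]$ for $j=0,\dots,N-1$. By pigeonhole, two of the $N+1$ points lie in a common interval, so they differ by at most $1/N$.

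There are two cases.
\begin{itemize}
\item If the two points are $\{a\xi\}$ and $\{b\xi\}$ with $0\le a<b\le N-1$, put $n=b-a$, so that $1\le n\le N-1<N$. Then $n\xi-(\lfloor b\xi\rfloor-\lfloor a\xi\rfloor)=\{b\xi\}-\{a\xi\}$ has absolute value at most $1/N$, hence $\Vert n\xi\Vert\le 1/N$.
\item If one of the points is $1$ and the other is $\{a\xi\}$, then $a\ge1$: indeed $a=0$ would give the point $0$, at distance $1>1/N$ from $1$. Then $|1-\{a\xi\}|\le 1/N$, so $\Vert a\xi\Vert\le 1/N$ with $1\le a\le N-1$.
\end{itemize}

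For real $N>1$ I expect the main obstacle: the counting must match. Let $M=\lceil N\rceil-1$ be the largest integer $<N$. I would use the $M+2$ points $0,\{\xi\},\dots,\{M\xi\},1$ in $[0,1]$. If $M+1\ge N$, subdividing into $\lceil N\rceil=M+1$ intervals of length $1/(M+1)\le 1/N$ gives two points within $1/N$. This holds unless $N$ is an integer, where $M+1=N$, which is the case already handled. If $N$ is not an integer, then $M+1=\lceil N\rceil>N$, so it holds too. The pigeonhole then yields $1\le n\le M<N$ with $\Vert n\xi\Vert\le 1/(M+1)\le 1/N$, completing the proof. The only delicate point is checking that the index $n$ stays in $[1,N)$ and is nonzero, which is handled by the case split above.
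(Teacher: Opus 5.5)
Your proof is correct. The pigeonhole argument on the $M+2$ points $0,\{\xi\},\dots,\{M\xi\},1$, with $M+1=\lceil N\rceil$ intervals of length $1/\lceil N\rceil\le 1/N$, yields $1\le n\le M<N$ and $\Vert n\xi\Vert\le 1/N$. Your case split correctly excludes $n=0$: the pair $\{0,1\}$ is at distance $1>1/\lceil N\rceil$, since $N>1$ forces $\lceil N\rceil\ge 2$.

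The paper gives no proof of this statement. It is quoted only as classical motivation, so there is nothing to compare against; your argument is the standard textbook proof.

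One cleanup: the sentence ``This holds unless $N$ is an integer'' is muddled. In fact $M+1=\lceil N\rceil\ge N$ holds for every real $N>1$, with equality exactly when $N$ is an integer. So your final paragraph alone covers both cases, and the separate integer case and the preliminary remarks about $\lfloor N\rfloor$ can be dropped.
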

%Throughout this text, $\Vert\cdot\Vert$ denotes the distance to the nearest integer. In  The well-known Dirichlet's theorem states that for any real numbers $\xi$ and $N \ge 1$, there exists an integer $1 \le n \le N$, such that 
%Let $\mathbb{T}=\mathbb{R}/\mathbb{Z}$ be the one-dimensional torus. We consider $T_\xi\colon \mathbb{T}\to\mathbb{T}\colon$ \[T_\xi x=x+\xi\modulo.\]
%Then from the perspective of dynamical systems,  $(\Vert n\xi \Vert)_{n\in\mathbb{N}}$ is the orbit of $0$ of $T_\xi$, and Dirichlet's theorem quantifies the rate of uniform approximation of $0$ by the orbit $(\Vert n\xi \Vert)_{n\in\mathbb{N}}$.
% the rate at which $0$ is approximated by the orbit $(\Vert n\xi \Vert)_{n\in\mathbb{N}}$ in a uniform way.
This result implies that the sequence $(\Vert n\xi \Vert)_{n\in\mathbb{N}}$ approximates $0$ with a polynomial speed of degree one in a uniform way, which is called uniform approximation (with respect to $N$) in \cite{wald} by Waldshmidt. %Let $\mathbb{T} = \mathbb{R}/\mathbb{Z}$ denote the one-dimensional torus. 
Kim and Liao \cite{KL} studied an inhomogeneous version of Dirichlet’s theorem, replacing  $0$ by an arbitrary point $y$ and allowing  a faster approximation speed. For a fixed irrational $\xi$ and $\alpha>0$, they calculated the Hausdorff dimension of the following set
\[\mathcal{U}[\xi,\alpha]:=\{y\in \mathbb{T}\colon \forall N\gg1,~\exists 1\le n\le N,~s.t. ~\Vert n\xi-y\Vert<N^{-\alpha}\},\]
which is shown to depend on the irrationality exponent of $\xi$. 

Let $\mathbb{T} = \mathbb{R}/\mathbb{Z}$ denote the one-dimensional torus.  Consider the translation map $T_\xi \colon \mathbb{T} \to \mathbb{T}$ defined by
\[T_\xi x=(x+\xi)\modulo.\]
Then  from the perspective of dynamical systems, the sequence $(\Vert n\xi \Vert)_{n\in\mathbb{N}}$ corresponds to the orbit of $0$ under the map of $T_\xi$. Naturally, one may consider the corresponding set when the orbit $(T_\xi^n0)_{n\in\mathbb{N}}$  is replaced by the orbit $(T^nx)_{n\in\mathbb{N}}$ of a general dynamical system $(\T, T )$, see \cite{BL16, KKP20, FST,HLiao24}. Moreover,  $\Vert n\xi \Vert$ can also be expresses as $\Vert n\xi \Vert= T_\xi^n x-x$, which leads to a uniform recurrence problem, see \cite{LPS, ZWu}.

%Instead of considering the sequence $(\Vert n\xi \Vert){n\in\mathbb{N}}$ in the context of a general dynamical system, 
Motivated by the studies of the uniform approximation problem, 
Koivusalo, Liao and Persson \cite{KLP} replaced the deterministic sequence $(\Vert n\xi \Vert)_{n\in\mathbb{N}}$ with a sequence $\omega := (\omega_n)_{n\in\mathbb{N}}$ of independent, identically distributed (i.i.d. for short) random variables uniformly distributed on  $\T$  with respect to Lebesgue measure. More precisely, given a decreasing sequence of positive numbers $(\ell_n)_{n\in\mathbb{N}}$, they  \cite{KLP} introduced the uniform random covering set
%Instead of replacing the sequence $(\Vert n\xi \Vert)_{n\in\mathbb{N}}$ by an orbit of a general dynamical system, Koivusalo et al. \cite{KLP} studied the  analog results when $(\Vert n\xi \Vert)_{n\in\mathbb{N}}$ is replaced by an independent and identically distributed (i.i.d.) sequence  $\omega:=(\omega_n)_n \subset \T $ which are uniformly distributed on $\T$ with respect to Lebesgue measure. Let $(\ell_n)_n$ be a decreasing sequence of positive numbers. Koivusalo et al. \cite{KLP} investigate the size, in the sense of both Lebesgue measure and Hausdorff dimension, of the following set
\[\mathcal{U}(\omega)=\left\{y\in \T\colon \forall N\gg1,~\exists 1\le n\le N,~s.t. ~\Vert \omega_n-y\Vert <\ell_N\right\}.\]
%Such sets are called uniform random covering sets.
and investigated its size, in terms of both Lebesgue measure and Hausdorff dimension. %Moreover, they posed some open questions: 
%\begin{itemize}
%\item {\bf Zero-one law for full covering}: Does the event $\{\mathcal{U}(\omega) = \mathbb{T}\}$ obey a zero-one law?
%\item {\bf Zero-one law for dimension}: Does the Hausdorff dimension of $\mathcal{U}(\omega)$ satisfy a zero-one law?
%\end{itemize}
%\medskip

%\noindent\textbf{(1) Zero-one law for full covering:} 
%\[\text{{\it Does the event }}\{\mathcal{U}(\omega) = \mathbb{T}\}\text{ {\it obey a zero-one law?}
%}\]

%\noindent\textbf{(2) Zero-one law for dimension:} 
%\[\text{{\it Does the Hausdorff dimension of }}\mathcal{U}(\omega)\text{ {\it satisfy a zero-one law?}
%}\]
%\medskip
%These questions arise naturally and reveal a deeper probabilistic structure in covering problems.

The uniform covering problem is closely related to, but different from, the famous random covering problem. Let $(\omega_n)_{n\in\mathbb{N}}$ be an i.i.d. random sequence with uniform distribution on $\T$.  In 1956, Dvoretzky \cite{dvor} proposed the question when all points in $\T$ are covered by many open intervals $I_n:=(\omega_n-\ell_n,\omega_n+\ell_n)$ (modulo 1)  infinitely often with probability 1. In 1972, Shepp \cite{shep} provided  a necessary and sufficient condition:  $\T=\limsup\limits_{n\to\infty}I_n$ almost surely  if and only if
\[\sum_{n=1}^\infty \frac{1}{n^2}\exp\left(2\sum_{k=1}^n\ell_k \right) = \infty.\]
Applying Borel-Cantelli lemma and  Fubini's theorem, the Lebsgue measure of $\limsup\limits_{n\to\infty}I_n$ is 0 or 1 almost surely according
to the convergence or divergence of $\sum_{n=1}^\infty\ell_n$. Let $a>0$ and $\tau>1$. When $\ell_n=\frac{a}{n^\tau}$, Fan and Wu \cite{FW} showed that the Hausdorff dimension of $\limsup\limits_{n\to\infty}I_n$ is $1/\tau$ almost surely.  Durand \cite{durand} gave the Hausdorff dimension  of $\limsup\limits_{n\to\infty}I_n$ for general  sequences $(\ell_n)_n$.
%This problem attracted much attention of mathematicians, and addressed  many variations of the covering problems. We refer \cite{Ka1,Ka2} for a history of and see \cite{EJJS,Persson,FW,Feng,HL,HLX,Jarvenpa} for more recent developments.
For extensive surveys and recent advances on Dvoretzky covering problem, we refer to \cite{Ka1,Ka2,EJJS,Persson,FW,Feng,HL,HLX,Jarvenpa}.

Motivated by these developments, we would like to investigate the uniform random covering problem in the context of a general metric space.  Let $(X,d)$ be a metric space, and let $\omega:=(\omega_n)_{n\ge1}$ be a sequence of independent random variables defined on a probability space $(\Omega,\mathscr{A},\mathbb{P})$, taking values in  $X$ and having the same distribution, denoted by $\mu$,  which is the probability measure defined by 
\begin{equation}\label{eqmu}
	\mu(A)=\mathbb{P}(\omega_1\in A)
\end{equation}
for all Borel sets $A \subset X$. We assume that throughout that $\mu$ is non-atomic, that is%In this paper, we always suppose that  for any $x\in X$ and $r\ge 0$
\begin{equation}\label{mu}
\mu(\{y\in X\colon d(y,x)=r\})=0\quad \text{for~all~}x\in X\text{~and~}r\ge0.
\end{equation}
%Suppose that $\omega:=(\omega_n)_{n\ge1}$ is a sequence of independent random variables which are uniformly distributed on $X$.
%Let $(\omega_n)_{n\ge1}$ be a stationary process defined on a probability space $(\Omega,\mathcal{B},\mathbb{P})$ and take values in a compact metric space $(X,d)$. Let $\mu$ be the the distribution of $\omega_1$ which is the probability measure defined by 
%\begin{equation}\label{eqmu}
%	\mu(A)=\mathbb{P}(\omega_1\in A)
%\end{equation}
%for all Borel sets $A \subset X$. 
%The measure $\mu$ is the distribution of $\xi_1$.
%\begin{definition}\label{exp}
%We say that $(\omega_n)_{n\ge1}$ is $exponentially~ mixing$ if there exist two constants $C>0$ and $0<\rho<1$
%such that  
%\[\big|\mathbb{P}(\omega_1\in A|D)-\mathbb{P}(\omega_1\in A)\big|\le C\rho^{n}
%\]
%for all $n\ge1$, balls $A\subset X$ and $D\in\mathcal{B}^{n+1}$, where $\mathcal{B}^{n+1}$ is the sub-$\sigma$-field 
%generated by $(\omega_{n+i})_{i\ge1}$.
%\end{definition}
Let  $\ell:=(\ell_n)_{n\ge1}$ be a sequence of positive numbers decreasing to zero. We define the corresponding {\it uniform random covering set} to be 
\begin{equation*}
\begin{split}
\mathcal{U}(\omega,\ell)&=\left\{y\in X\colon \forall N\gg1,~\exists 1\le n\le N,~s.t. ~d(\omega_n,y)<\ell_N\right\}\\
&=\bigcup_{p=1}^{\infty}\bigcap_{N=p}^{\infty}\bigcup_{n=1}^NB(\omega_n,\ell_N). 
\end{split}
\end{equation*}
As the studies in torus $\T$, we are interested in the metric properties (the measure $\mu$ and the Hausdorff dimension) of the set $\mathcal{U}(\omega,\ell)$. %—specifically, the measure $\mu$, and the Hausdorff dimension—of the set $\mathcal{U}(\omega,\ell)$.

The remainder of the article is organized as follows.  Section 2 formally states our main results concerning measure and dimension. Section 3 contains the proof of the zero-one laws, that is, Theorem 2.1, while Section 4 gives the proofs of Theorems 2.2-2.3. In Section 5, we first investigate the hitting times and the connection to the local dimension of the measure $\mu$. We then use the study to provide the proof of Theorem 2.4. Finally, Section 6 presents an application to $d$-dimensional torus, and we furthermore investigate the Hausdorff dimension at a critical case.

\section{Main results}

The following theorem %gives a positive answer to the questions listed above. More precisely, this theorem 
proves the existence of a 0-1 law for each of the following: the Hausdorff dimension of $\mathcal{U}(\omega,\ell)$, the measure of $\mathcal{U}(\omega,\ell)$, and the event $\{\mathcal{U}(\omega,\ell) = X\}$.
\begin{theorem}\label{theorem1}
Assume that the sequence $\ell=(\ell_n)_n$ is decreasing. Let $0\le s\le \dimh X$, $0\le t\le 1$. Let
\[A:=\{\mathcal{U}(\omega,\ell)=X\},\quad B:=\{ \dimh \mathcal{U}(\omega,\ell)=s\},\quad C:=\{\mu(\mathcal{U}(\omega,\ell))=t\}.\]
Then $\mathbb{P}(A),\mathbb{P}(B), \mathbb{P}(C)\in\{0,1\}.$
\end{theorem}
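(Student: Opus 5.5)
Each of $A$, $B$, $C$ is invariant under changing finitely many of the $\omega_n$, so each should be a tail event. Then Kolmogorov's zero-one law, or the Hewitt--Savage law for finite permutations, gives probability $0$ or $1$. The work is to prove this invariance: if $\omega$ and $\omega'$ agree for all $n> m$, then $\mathcal U(\omega,\ell)$ and $\mathcal U(\omega',\ell)$ differ only by a set that is negligible for the three properties. Measurability of these events also needs a remark. On the set where $\mathcal U$ is determined by a countable dense family of points, the sets $\bigcup_{n\le N}B(\omega_n,\ell_N)$ are open and depend measurably on $\omega$. We assume, as is implicit, that $X$ is separable, and we work with completions of $\mathbb P$ if needed.

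\textbf{Key comparison.} Fix $m$. Write
\[
\mathcal U(\omega,\ell)=\bigcup_p\bigcap_{N\ge p}\Big(\bigcup_{n=1}^m B(\omega_n,\ell_N)\cup\bigcup_{n=m+1}^N B(\omega_n,\ell_N)\Big).
\]
Since $\ell_N\to0$ and $\ell$ is decreasing, $\bigcap_{N\ge p}\bigcup_{n\le m}B(\omega_n,\ell_N)=\{\omega_1,\dots,\omega_m\}$, a finite set. Set $V_m(\omega)=\bigcup_p\bigcap_{N\ge p}\bigcup_{n=m+1}^N B(\omega_n,\ell_N)$, which depends only on $(\omega_n)_{n>m}$. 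For $y\notin\{\omega_1,\dots,\omega_m\}$ we have $d(y,\omega_n)\ge\delta>0$ for $n\le m$, so for $N$ large the first $m$ balls do not contain $y$. This gives
\[
V_m(\omega)\subset\mathcal U(\omega,\ell)\subset V_m(\omega)\cup\{\omega_1,\dots,\omega_m\}.
\]
Hence $\mathcal U(\omega,\ell)\,\triangle\, V_m(\omega)$ is finite.

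\textbf{Conclusion for $B$ and $C$.} Finite sets have Hausdorff dimension $0$ and $\mu$-measure $0$, since $\mu$ is non-atomic by \eqref{mu} with $r=0$. So $\mu(\mathcal U)=\mu(V_m)$ for every $m$, and $C$ is measurable with respect to $\sigma(\omega_n:n>m)$ for all $m$, i.e.\ it is a tail event. For dimension, $\dimh\mathcal U=\dimh V_m$ whenever $V_m$ is nonempty, or more generally whenever $s>0$. In the case $s=0$, $\dimh$ of a nonempty finite set is still $0$, and $\dimh\emptyset$ is conventionally $0$ as well, so $\{\dimh\mathcal U=0\}=\{\dimh V_m=0\}$ in all cases. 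Thus $B$ is a tail event. Kolmogorov's law then gives $\mathbb P(B),\mathbb P(C)\in\{0,1\}$.

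\textbf{The event $A$ (main obstacle).} Here a finite exceptional set matters, so we cannot directly use tail measurability. Instead, $A$ is invariant under finite permutations of the indices: if $\pi$ permutes $\{1,\dots,m\}$, then for $N\ge m$ the union $\bigcup_{n\le N}B(\omega_{\pi(n)},\ell_N)$ is unchanged, and membership in $\mathcal U$ depends only on large $N$. So $\mathcal U(\omega\circ\pi,\ell)=\mathcal U(\omega,\ell)$ exactly, and $A$ is exchangeable. The Hewitt--Savage zero-one law for i.i.d.\ sequences then yields $\mathbb P(A)\in\{0,1\}$. The same argument also reproves the statements for $B$ and $C$, since $\mathcal U$ itself is permutation invariant. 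The real difficulty is measurability of $A$ in a general metric space. I would handle it by showing that, almost surely, $A=\{X\subset\mathcal U\}$ can be expressed through a countable dense set together with the openness of $\bigcup_{n\le N}B(\omega_n,\ell_N)$, or else simply work in the $\mathbb P$-completion, where Hewitt--Savage still applies.
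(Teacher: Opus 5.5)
Your proof is correct. For $B$ and $C$ it is the paper's argument. Your $V_m$ is the paper's $\mathcal U_{m+1}(\omega,\ell)$, and your sandwich $V_m\subset\mathcal U(\omega,\ell)\subset V_m\cup\{\omega_1,\dots,\omega_m\}$ is \eqref{for1}. Since finite sets have dimension $0$ and $\mu$-measure $0$, both events coincide with events in $\sigma(\omega_n\colon n>m)$ for every $m$.

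For $A$ you take a genuinely different route. The paper stays with the Kolmogorov-type law of Theorem~\ref{thm01}:
\begin{itemize}
\item It first shows $\mathbb P(x\in\mathcal U(\omega,\ell))\in\{0,1\}$ for each fixed $x$, using $\mu(\{x\})=0$.
\item If some $x_0$ is a.s.\ uncovered, then $\mathbb P(A)=0$.
\item Otherwise $\mathbb P(x\in\mathcal U_k)=1$ for all $x$ and $k$. Since $\omega_j$ ($j<k$) is independent of $\mathcal U_k$, this gives $\{\omega_1,\dots,\omega_{k-1}\}\subset\mathcal U_k$ a.s. Hence $A$ differs from $\{\mathcal U_k=X\}\in\sigma(\omega_k,\omega_{k+1},\dots)$ only by a null set.
\end{itemize}
You instead note that $\mathcal U(\omega\circ\pi,\ell)=\mathcal U(\omega,\ell)$ \emph{exactly} for every finite permutation $\pi$. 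So $A$ is exchangeable and Hewitt--Savage applies.

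Your route is shorter and more robust. It handles $A$, $B$, $C$ uniformly. It needs neither non-atomicity, nor $\ell_N\to0$, nor even monotonicity of $\ell$. It also avoids the conditioning step, which tacitly requires joint measurability of $(\omega,y)\mapsto\mathbf 1_{\mathcal U_k(\omega,\ell)}(y)$. The paper's route buys extra information in return: the pointwise dichotomy for $\mathbb P(x\in\mathcal U(\omega,\ell))$, and the criterion that $\mathbb P(A)=0$ as soon as a single point is a.s.\ uncovered.

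Two minor remarks.
\begin{itemize}
\item Your opening claim that $A$ is unchanged when finitely many $\omega_n$ are altered is false as stated: moving $\omega_1$ can leave the old point $\omega_1$ uncovered. You correct this yourself later.
\item The countable-dense-set idea does not establish measurability of $A$. The set $\mathcal U(\omega,\ell)$ is only $G_{\delta\sigma}$, so containing a dense set does not force it to equal $X$. For Polish $X$ the clean fix is that $\{(\omega,y)\colon y\notin\mathcal U(\omega,\ell)\}$ is Borel in $X^{\mathbb N}\times X$. Then $A^c$ is analytic, hence measurable for the completion of $\mathbb P$, and Hewitt--Savage still holds there. The paper takes measurability for granted, so this is not a gap relative to it.
\end{itemize}
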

%{\color{red} Question: the results above only depend on $\ell$?}

For $\ell=(\ell_n)_n$, denote
\[E(\ell):=\left\{y\in X\colon \sum_{n=1}^\infty\mu(B(y,\ell_n))=\infty~{\rm and~}\sum_{n=1}^\infty \mu(B(y,\ell_n))e^{-n\mu(B(y,\ell_n))}<\infty\right\}.\]
%{\color{red}\[E(\ell):=\left\{y\in X\colon \sum_{n=1}^\infty \mu(B(y,\ell_n))e^{-n\mu(B(y,\ell_n))}<\infty\right\}.\]}
\begin{theorem}\label{theorem2}
Assume that the sequence $\ell=(\ell_n)_n$ is decreasing, and for all $y\in X$, the sequence $(n\mu(B(y,\ell_n)))_n$ is nondecreasing. Then almost surely
\[\mu(\mathcal{U}(\omega,\ell))=\mu(\mathcal{U}(\omega,\ell)\cap E(\ell))=\mu(E(\ell)).\]
\end{theorem}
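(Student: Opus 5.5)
Fubini reduces the statement to a pointwise probability computation: the plan is to show that for each fixed $y\in X$, $\mathbb{P}(y\in\mathcal{U}(\omega,\ell))=\mathbbm{1}_{E(\ell)}(y)$. Granting this, Fubini's theorem on $\Omega\times X$ gives
\[\mathbb{E}\,\mu(\mathcal{U}(\omega,\ell)\triangle E(\ell))=\int_X \mathbb{P}\bigl(y\in \mathcal{U}(\omega,\ell)\triangle E(\ell)\bigr)\,d\mu(y)=0,\]
so almost surely $\mu(\mathcal{U}\triangle E)=0$, which yields both equalities at once. (Measurability of $\{(\omega,y): y\in\mathcal{U}(\omega,\ell)\}$ follows from the countable union/intersection representation of $\mathcal{U}$ with open balls.)

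Fix $y$ and write $p_N=\mu(B(y,\ell_N))$. Since $d(\omega_n,y)<\ell_N$ iff $\omega_n\in B(y,\ell_N)$, the event $y\notin\bigcup_{n\le N}B(\omega_n,\ell_N)$, call it $F_N$, has probability $(1-p_N)^N$ by independence. Also $y\in\mathcal{U}$ iff only finitely many $F_N$ occur. I would then prove two implications.

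\emph{If $y\in E(\ell)$, then $\mathbb{P}(y\in\mathcal U)=1$.} Since $(1-p_N)^N\le e^{-Np_N}$, the convergence $\sum p_Ne^{-Np_N}<\infty$ alone does not directly give $\sum e^{-Np_N}<\infty$. So I would pass to a sparse sequence. The key observation is that the $F_N$ are nested enough: on each block $N_k\le N<N_{k+1}$, $F_N\subset\{\text{no } \omega_n\in B(y,\ell_{N_{k+1}}),\ n\le N_k\}$ because the radius $\ell_N\ge\ell_{N_{k+1}}$ and $N\ge N_k$. Choose $N_k$ so that the block probabilities $(1-p_{N_{k+1}})^{N_k}$ are summable. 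A natural choice is to use the monotonicity of $Np_N$: divergence of $\sum p_N$ forces $Np_N\to\infty$ (otherwise $Np_N$ bounded, and monotonic nondecreasing, gives $p_N\asymp 1/N$, hence $\sum p_Ne^{-Np_N}\asymp\sum 1/N=\infty$, a contradiction). Then group indices where $Np_N$ lies in $[k,k+1)$-type ranges, or define $N_{k+1}$ as roughly $N_k(1+1/(N_kp_{N_k}))$. The sum $\sum p_Ne^{-Np_N}$ over a block approximates $e^{-N_kp_{N_k}}$ times (block length)$\cdot p$, which is of order one. This converts the convergence hypothesis into $\sum_k e^{-cN_kp_{N_{k+1}}}<\infty$, and Borel–Cantelli finishes.

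\emph{If $y\notin E(\ell)$, then $\mathbb{P}(y\in\mathcal U)=0$.} If $\sum p_N<\infty$, then $Np_N$ nondecreasing with $\sum p_N<\infty$ forces $Np_N\to0$, so $\mathbb{P}(F_N)\to1$. Hence $\mathbb{P}(\limsup F_N)\ge\limsup\mathbb{P}(F_N)=1$, and $y\notin\mathcal U$ a.s. If instead $\sum p_N=\infty$ but $\sum p_Ne^{-Np_N}=\infty$, I would show infinitely many $F_N$ occur a.s. via a second-moment (Kochen–Stone/Erd\H{o}s–Rényi) Borel–Cantelli on a sparse subsequence $(N_k)$ chosen as above, so that $\sum_k\mathbb{P}(F_{N_k})=\infty$. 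For the quasi-independence estimate $\mathbb{P}(F_{N_j}\cap F_{N_k})\lesssim\mathbb{P}(F_{N_j})\mathbb{P}(F_{N_k})$, compute
\[\mathbb{P}(F_{N_j}\cap F_{N_k})=(1-p_{N_j})^{N_j}(1-p_{N_k})^{N_k-N_j}\qquad (j<k),\]
using $B(y,\ell_{N_k})\subset B(y,\ell_{N_j})$. The ratio to the product is $(1-p_{N_k})^{-N_j}\approx e^{N_jp_{N_k}}$, which is bounded when $N_j p_{N_k}$ is small. The sparse choice should make $N_jp_{N_k}$ summably small for $k\gg j$, while the nearby pairs are handled crudely.

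The main obstacle is this divergence direction: choosing the subsequence so that both $\sum\mathbb{P}(F_{N_k})=\infty$ survives from $\sum p_Ne^{-Np_N}=\infty$ and the correlations stay controlled. Monotonicity of $Np_N$ is exactly what lets one compare block sums to single terms. Once one has positive probability, a Hewitt–Savage/tail argument as in Theorem~\ref{theorem1} upgrades it to probability one, since $\limsup F_N$ is insensitive to finitely many $\omega_n$.
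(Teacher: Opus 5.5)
Your reduction is the paper's: Fubini plus the pointwise dichotomy $\mathbb{P}(y\in\mathcal{U}(\omega,\ell))=\mathbbm{1}_{E(\ell)}(y)$, with $F_N$ and $\mathbb{P}(F_N)=(1-p_N)^N$ set up as you did. The paper does not prove this dichotomy. It quotes Theorem~\ref{gala} (Galambos, Theorem 4.3.1, i.e.\ Barndorff-Nielsen's lower-class criterion for maxima) for $Y_n=d(\omega_n,y)^{-1}$, $u_n=\ell_n^{-1}$. Then your $F_N$ is exactly $\{Z_N\le u_N\}$, the tail $\mathbb{P}(Y_1\ge u_N)$ equals $p_N$, and the two monotonicity hypotheses of Theorem~\ref{theorem2} are exactly what that result requires.

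Your convergence half is a correct self-contained proof of one direction. With $a_N:=Np_N$ and $N_{k+1}:=N_k+\lceil 1/p_{N_k}\rceil$, monotonicity of $p_N$ and $a_N$ makes the block sum of $p_Ne^{-a_N}$ over $[N_k,N_{k+1})$ at least $\tfrac13 e^{-a_{N_{k+1}}}$. Also $N_kp_{N_{k+1}}\ge a_{N_{k+1}}-2$, so $\sum_k(1-p_{N_{k+1}})^{N_k}<\infty$.

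The genuine gap is the other direction: $\sum_N p_N=\sum_N p_Ne^{-a_N}=\infty$ should imply $\mathbb{P}(\limsup_N F_N)=1$. This is the real content of the cited theorem, and your proposal only announces it. Your heuristic also misses the hard regime. Along your blocks, $N_jp_{N_k}=a_{N_k}N_j/N_k$ is small only once $N_k/N_j\gg a_{N_k}$. For $2N_j<N_k<\eta^{-1}a_{N_k}N_j$ the correlation ratio $(1-p_{N_k})^{-N_j}$ can be as large as $e^{a_{N_k}/2}$, and the number of such $k$ grows with $a_{N_j}$. These pairs are neither ``far'' nor ``nearby'', so neither of your two treatments applies.

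Here is one way to close it, with $P_k:=\mathbb{P}(F_{N_k})$.
\begin{itemize}
\item If $a_N$ is bounded, then $\inf_N\mathbb{P}(F_N)>0$, and the $0$-$1$ law finishes.
\item If $a_N\to\infty$, combine $\mathbb{P}(F_{N_j}\cap F_{N_k})\le P_j e^{-p_{N_k}(N_k-N_j)}$ with $p_{N_k}/p_{N_i}\ge N_i/N_k$; this is where monotonicity of $a_N$ enters.
\begin{itemize}
\item When $N_k\le 2N_j$ this gives $P_je^{-(k-j)/2}$.
\item In the intermediate range it gives $P_je^{-a_{N_k}/2}$. There are at most $O(A^2/\eta)$ indices with $a_{N_k}\in[A,A+1)$, so the sum over $k$ is $o(P_j)$. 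Together with the previous case this contributes $O(\sum_jP_j)=o((\sum_jP_j)^2)$.
\item Beyond that range the ratio to $P_jP_k$ is at most $e^{2\eta}$.
\end{itemize}
\item You must also check $\sum_kP_k=\infty$. This needs $(1-p_{N})^{N}\ge\tfrac12e^{-a_{N}}$, which holds on indices with $a_{N_k}\le 2\log N_k$; the remaining indices contribute only a convergent part.
\end{itemize}
Kochen--Stone then gives $\mathbb{P}(\limsup_kF_{N_k})\ge e^{-2\eta}>0$. The pointwise $0$-$1$ law from the proof of Theorem~\ref{theorem1} upgrades this to probability $1$. Without this estimate, or simply invoking Theorem~\ref{gala} as the paper does, your argument is incomplete.
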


\begin{theorem}\label{theorem3}
Assume that the sequence $\ell=(\ell_n)_n$ is decreasing. If 
\[\sum_{n=1}^\infty n\int_X\mu(B(y,\ell_n+\ell_{n+1}))d\mu(y)<\infty,\]
then almost surely $\mathcal{U}(\omega,\ell)=\{\omega_k\colon k\in\mathbb{N}\}.$
\end{theorem}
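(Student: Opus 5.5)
We argue by Borel–Cantelli. The key event is that, for some $N$, a point is within $\ell_N$ of two distinct earlier sample points. More precisely, for $N\ge1$ and $1\le i<j\le N+1$, let
\[F_{N}^{i,j}:=\{d(\omega_i,\omega_j)<\ell_N+\ell_{N+1}\}.\]
The idea is that a point $y\in\mathcal{U}(\omega,\ell)$ not among the $\omega_k$ must, for all large $N$, lie in some ball $B(\omega_n,\ell_N)$ with $n\le N$. Since $\ell_N\to0$ and $y\ne\omega_k$ for every $k$, the index $n=n(N)$ must change infinitely often. So the ball covering $y$ at time $N$ and the ball covering it at time $N+1$ come from different centres infinitely often.

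Concretely, fix $y\notin\{\omega_k\}$ with $y\in\bigcap_{N\ge p}\bigcup_{n\le N}B(\omega_n,\ell_N)$, and choose $n_N\le N$ with $d(\omega_{n_N},y)<\ell_N$. The sequence $(n_N)$ cannot be eventually constant, since otherwise $d(\omega_{n},y)<\ell_N\to0$, giving $y=\omega_n$. Hence there are infinitely many $N$ with $n_N\ne n_{N+1}$, and for each such $N$,
\[d(\omega_{n_N},\omega_{n_{N+1}})<\ell_N+\ell_{N+1}.\]
Here $n_N\le N$ and $n_{N+1}\le N+1$. Thus infinitely many of the events
\[G_N:=\bigcup_{1\le i<j\le N+1}F_N^{i,j}\]
occur. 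It therefore suffices to show $\sum_N\mathbb{P}(G_N)<\infty$, because then almost surely only finitely many $G_N$ occur, and so no such $y$ exists.

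To bound $\mathbb{P}(G_N)$, use independence and Fubini. For $i\ne j$,
\[\mathbb{P}(F_N^{i,j})=\int_X\mu(B(y,\ell_N+\ell_{N+1}))\,d\mu(y).\]
The number of pairs is $\binom{N+1}{2}\le N^2$, which gives $\mathbb{P}(G_N)\le N^2\int\mu(B(y,\ell_N+\ell_{N+1}))\,d\mu$. This is one power of $N$ too many compared with the hypothesis, and this is the main obstacle. The fix is to exploit that the covering index at time $N+1$ can be taken new. Refine the event: between times $N$ and $N+1$ the only new point is $\omega_{N+1}$, and the radius shrinks from $\ell_N$ to $\ell_{N+1}$. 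I will rework the selection so that each switch event involves the newest point. Define $m_N:=\max\{n\le N: d(\omega_n,y)<\ell_N\}$. Since $\ell$ is decreasing, any index valid at time $N+1$ with $n\le N$ is also valid at time $N$, so $m_{N+1}\le m_N$ unless $m_{N+1}=N+1$. Thus $(m_N)$ is non-increasing except at jumps to the new index $N+1$. A non-increasing integer sequence is eventually constant, which we have excluded, so infinitely many jumps $m_{N+1}=N+1$ occur. At each jump, both $\omega_{m_N}$ and $\omega_{N+1}$ lie near $y$, giving
\[d(\omega_{m_N},\omega_{N+1})<\ell_N+\ell_{N+1}.\]
So we may replace $G_N$ by
\[G'_N:=\bigcup_{i\le N}F_N^{i,N+1},\]
and then $\mathbb{P}(G'_N)\le N\int_X\mu(B(y,\ell_N+\ell_{N+1}))\,d\mu(y)$. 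This is summable by hypothesis.

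Finally, Borel–Cantelli shows that almost surely only finitely many $G'_N$ occur, so almost surely $\mathcal{U}(\omega,\ell)\subset\{\omega_k\colon k\in\mathbb{N}\}$. For the reverse inclusion, $\omega_k\in B(\omega_k,\ell_N)$ for every $N\ge k$, so each $\omega_k\in\mathcal{U}(\omega,\ell)$. Note that the non-atomicity assumption \eqref{mu} is not needed beyond guaranteeing that the measure computations are valid, and the whole argument reduces to the observation that the jumps of $m_N$ land on the new index.
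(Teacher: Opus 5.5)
Your proof is correct and is essentially the paper's argument: your event $G'_N$ is exactly the paper's Borel--Cantelli event that $\omega_{N+1}$ lies within $\ell_N+\ell_{N+1}$ of some $\omega_k$ with $k\le N$, with the same union bound $N\int_X\mu(B(y,\ell_N+\ell_{N+1}))\,d\mu(y)$ (the paper phrases it via $(1-x)^n\ge 1-nx$). The only difference is the deterministic step afterwards: you argue pointwise via the largest valid index $m_N$, whereas the paper iterates the identity $\bigcup_{k\le n}B(\omega_k,\ell_n)\cap\bigcup_{k\le n+1}B(\omega_k,\ell_{n+1})=\bigcup_{k\le n}B(\omega_k,\ell_{n+1})$ to get $\bigcap_{n\ge p}\bigcup_{k\le n}B(\omega_k,\ell_n)=\{\omega_1,\dots,\omega_p\}$; both are valid, and yours spells out a step the paper leaves terse.
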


Let us introduce a notion of the local dimension of measure $\mu$. The {\it upper} and {\it lower local dimensions} of a measure $\mu$ at a point $y \in {\rm supp}\mu$ are defined as
 \begin{align*}
\overline{d}_\mu(y) = \limsup_{r\searrow 0} \frac{\log\mu(B(y,r))}{\log r} \quad \text{and} \quad \underline{d}_\mu(y)  = \liminf_{r\searrow 0} \frac{\log\mu(B(y,r))}{\log r},
\end{align*}
where $B(y,r)$ is the  ball of center $y$ and radius $r$. If $\overline{d}_\mu(y) =\underline{d}_\mu(y),$ we refer to the common value as the {\it local dimension} of  $\mu$ at   $y \in {\rm supp}\mu$
\[ d_\mu(y)  = \lim_{r\searrow 0} \frac{\log\mu(B(y,r))}{\log r}.\]
If $y\notin {\rm supp}\mu$, let $\overline{d}_\mu(y) ,\, \underline{d}_\mu(y)=\infty. $

The following theorem shows that the Hausdorff dimension of $ \mathcal{U}(\omega,\ell)$ is related to the upper and lower local dimensions of  measure $\mu$.

\begin{theorem}\label{theorem4}
Assume that $(X,d)$ is a separable complete metric space.     Let  $\ell=(n^{-\alpha})_n$,  $\alpha>0$. Then almost surely
\[\dimh\left\{y\in X\colon \overline{d}_\mu(y)<\frac{1}{\alpha}\right\}\le \dimh \mathcal{U}(\omega,\ell)\le \dimh\left\{y\in X\colon \underline{d}_\mu(y)\le\frac{1}{\alpha}\right\}.\]
\end{theorem}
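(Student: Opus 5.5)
The plan is to prove the two inclusions separately, by comparing $\mathcal{U}(\omega,\ell)$ with sets defined by local dimension. For each $y$, everything is governed by the hitting time
\[\tau_r(y)=\min\{n\ge1\colon d(\omega_n,y)<r\}.\]
By definition, $y\in\mathcal{U}(\omega,\ell)$ if and only if $\tau_{\ell_N}(y)\le N$ for all large $N$. Since the $\omega_n$ are i.i.d. with law $\mu$, $\tau_r(y)$ is geometric with parameter $\mu(B(y,r))$. With $\ell_N=N^{-\alpha}$, the natural threshold is $\mu(B(y,N^{-\alpha}))\approx N^{-1}$, that is, local dimension $1/\alpha$.

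\textbf{Upper bound.} I will show that almost surely
\[\mathcal{U}(\omega,\ell)\setminus\{\omega_k\colon k\in\mathbb{N}\}\subset\{y\colon \underline{d}_\mu(y)\le 1/\alpha\}.\]
A countable set does not change Hausdorff dimension, so this suffices. Fix $\varepsilon>0$ and let
\[F_\varepsilon=\{y\colon \underline{d}_\mu(y)>1/\alpha+\varepsilon\}.\]
For $y\in F_\varepsilon$ we have $\mu(B(y,r))\le r^{1/\alpha+\varepsilon}$ for small $r$. I work along the dyadic scales $N=2^j$ and use the fact that $\mathcal{U}$ requires $\bigcup_{n\le N}B(\omega_n,N^{-\alpha})\ni y$.

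Pointwise, $\mathbb{P}\bigl(y\in\bigcup_{n\le 2^j}B(\omega_n,2^{-j\alpha})\bigr)\le 2^j\mu(B(y,2^{-j\alpha}))\le 2^{-j\alpha\varepsilon}$, which is summable in $j$. By Borel–Cantelli, each fixed $y\in F_\varepsilon$ is almost surely not in $\mathcal{U}$. To pass from pointwise statements to the whole set, I argue directly on the random points. If $y\in F_\varepsilon\cap\mathcal{U}$, then for all large $j$ some $\omega_n$ with $n\le 2^j$ lies within $2^{-j\alpha}$ of $y$. Then $d(\omega_n,y)\to0$, and $y\ne\omega_k$ forces infinitely many distinct indices to be involved.

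The key estimate I want is that for a fixed $k$ and $n>k$, $\omega_n$ cannot approach $\omega_k$'s neighbourhood this way. Concretely, I will bound
\[\mathbb{P}\bigl(\exists\, y\in F_\varepsilon^{(\delta)}:\ d(\omega_n,y),d(\omega_m,y)<2^{-j\alpha},\ n,m\le 2^j\bigr)\]
via pairs, using $F^{(\delta)}_\varepsilon$, the set where the bound $\mu(B(y,r))\le r^{1/\alpha+\varepsilon}$ holds uniformly for $r<\delta$. For $\omega_n$ within distance $2\cdot2^{-j\alpha}$ of a point of $F^{(\delta)}_\varepsilon$, I intend to use the doubling-free estimate $B(\omega_n,2\cdot 2^{-j\alpha})\subset B(y,3\cdot 2^{-j\alpha})$, which gives conditional probability at most $3^{1/\alpha+\varepsilon}2^{-j(1+\alpha\varepsilon)}$. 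This leads to a summable bound of order $2^{2j}\cdot 2^{-j(1+\alpha\varepsilon)}\cdot\mu(\cdot)$, and I still have to track this carefully. This step is the main obstacle: making the pointwise Borel–Cantelli argument uniform in $y$ without doubling assumptions. The fallback is to show only $\dimh(\mathcal{U}\cap F_\varepsilon)=0\le\dimh\{\underline d_\mu\le1/\alpha\}$ via this covering by balls $B(\omega_n,2^{-j\alpha})$ meeting $F_\varepsilon^{(\delta)}$. Their expected number is about $2^{j}\cdot 2^{-j\alpha\varepsilon}$, which gives $\mathcal{H}^s$-sums $\to0$ for any $s>0$ with $s\alpha>1-\alpha\varepsilon$. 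This still needs refinement, but even if it only yields a dimension bound it suffices. Finally I let $\varepsilon\downarrow0$ along a countable sequence.

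\textbf{Lower bound.} I will show that almost surely $\{y\colon \overline{d}_\mu(y)<1/\alpha\}\subset\mathcal{U}(\omega,\ell)$ up to a set I can control. Write this set as $\bigcup_{\varepsilon,\delta}G_{\varepsilon,\delta}$, where $G_{\varepsilon,\delta}$ is the set of $y$ with $\mu(B(y,r))\ge r^{1/\alpha-\varepsilon}$ for all $r<\delta$. Fix $\varepsilon,\delta$. Because $X$ is separable, I take, for each $j$, a maximal $2^{-j\alpha}/2$-separated subset $\{z_i\}$ of $G_{\varepsilon,\delta}$. Its cardinality is at most $2^{j(1-\alpha\varepsilon)}\cdot C$ by the measure lower bound and disjointness of the half-balls. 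For each $z_i$, the probability that no $\omega_n$ with $n\le 2^j$ lies in $B(z_i,2^{-(j+1)\alpha}/2)$ is at most $\exp(-c\,2^{j}2^{-j\alpha(1/\alpha-\varepsilon)})=\exp(-c2^{j\alpha\varepsilon})$. A union bound gives summability in $j$. Hence almost surely, for all large $j$, every point of $G_{\varepsilon,\delta}$ is within $2^{-(j+1)\alpha}$ of some $\omega_n$ with $n\le 2^j$. For $2^j\le N<2^{j+1}$ this gives $d(\omega_n,y)<N^{-\alpha}$, so $G_{\varepsilon,\delta}\subset\mathcal{U}$. Taking countable unions over rational $\varepsilon,\delta$ and using monotonicity of $\dimh$ completes the proof.
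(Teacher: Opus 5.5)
Your lower bound is essentially correct and takes a different route from the paper. The upper bound, however, has a genuine gap.

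\textbf{Lower bound.} As written, a point of $G_{\varepsilon,\delta}$ is only within $2^{-j\alpha}/2+2^{-(j+1)\alpha}/2>2^{-(j+1)\alpha}$ of some $\omega_n$. Fix this by taking both the separated net and the target balls at the common radius $2^{-(j+1)\alpha}/2$; the cardinality bound and the union bound are unchanged up to constants. The argument then gives the almost sure \emph{inclusion} $\{\overline d_\mu<1/\alpha\}\subset\mathcal U(\omega,\ell)$. This is stronger than what the paper obtains via Lemma~\ref{equality} (pointwise $\overline H(\omega,y)=\overline d_\mu(y)$ a.s.), Fubini against a Frostman measure on the analytic set $\{\overline d_\mu<1/\alpha\}$, and Lemma~\ref{subset}. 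It also needs neither completeness nor Frostman's lemma.

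\textbf{Upper bound.} Neither of your routes works.
\begin{itemize}
\item The pair bound $2^{2j}\cdot 2^{-j(1+\alpha\varepsilon)}=2^{j(1-\alpha\varepsilon)}$ is summable only if $\alpha\varepsilon>1$. With $b=1/\alpha+\varepsilon$ this means $\alpha b>2$, essentially the regime of Theorem~\ref{theorem3}. The extra factor you hope for need not be small: for Lebesgue measure on $\mathbb{T}^d$ with $\alpha>1/d$, $F_\varepsilon^{(\delta)}$ is the whole torus for small $\varepsilon,\delta$.
\item The fallback rests on a wrong count. The expected number of $n\le 2^j$ with $B(\omega_n,2^{-j\alpha})\cap F_\varepsilon^{(\delta)}\neq\emptyset$ is $2^j$ times the $\mu$-measure of the $2^{-j\alpha}$-neighbourhood of $F_\varepsilon^{(\delta)}$, which can equal $2^j$. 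The bound $2^{-j\alpha\varepsilon}$ controls whether a \emph{fixed} $y$ is covered, not how many balls meet a large set.
\item Even granting your count, a single-scale cover only gives $\dimh(\mathcal U\cap F_\varepsilon)\le 1/\alpha-\varepsilon$. That does not suffice: in the torus example $\{\underline d_\mu\le 1/\alpha\}$ is empty, and the theorem asserts $\dimh\mathcal U=0$.
\end{itemize}

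\textbf{The missing idea.} A point of $\mathcal U$ is covered at \emph{every} dyadic scale, so the relevant balls are nested and must be counted recursively across scales.

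The paper lets $N_i$ be the number of $k\le 2^i$ with $B(\omega_k,\ell_{2^i})$ meeting $G_{l,i-1}$, the part of $A_{b,n}$ covered at scales $2^l,\dots,2^{i-1}$.
\begin{itemize}
\item An old index can be relevant only if it was relevant at the previous scale.
\item A new $\omega_k$, $2^{i-1}<k\le 2^i$, must land within $\ell_{2^{i-1}}+\ell_{2^i}$ of one of the $N_{i-1}$ relevant centers. That ball has $\mu$-measure $\lesssim 2^{-i\alpha b}$ because it meets $A_{b,n}$.
\end{itemize}
Independence gives $\mathbb{E}N_i\le\bigl(1+C2^{i(1-\alpha b)}\bigr)\mathbb{E}N_{i-1}$. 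Hence a.s. $N_i\le(1+\varepsilon)^i(1+c_l)^{i-l}2^l$ eventually, with $c_l\to0$. The lower box dimension of each piece is then at most $\log\bigl((1+\varepsilon)(1+c_l)\bigr)/(\alpha\log 2)$, which tends to $0$.

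\textbf{Rescuing your countability claim.} Your stronger claim is in fact true, but you need chains, not pairs. Take $y\in(\mathcal U\cap A_{b,n})\setminus\{\omega_k\colon k\in\mathbb N\}$ and its successive record-nearest points $\omega_{t_1},\omega_{t_2},\dots$.
\begin{itemize}
\item They satisfy $d(\omega_{t_{k+1}},\omega_{t_k})<2(t_{k+1}-1)^{-\alpha}$.
\item For large $k$, $B(\omega_{t_k},2(t_{k+1}-1)^{-\alpha})\subset B(y,3(t_{k+1}-1)^{-\alpha})$, so this ball has $\mu$-measure at most $3^b(t_{k+1}-1)^{-\alpha b}$. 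This is a condition on $\omega_{t_k}$ alone.
\end{itemize}
Conditioning step by step, a chain of length $L$ with these two properties starting at a fixed time $m$ has probability at most $S_m^L$, where $S_m=\sum_{t>m}3^b(t-1)^{-\alpha b}$. Since $\alpha b>1$, $S_m<1$ for large $m$. So a.s. no infinite chain exists, and $\mathcal U\cap A_{b,n}\subset\{\omega_k\colon k\in\mathbb N\}$.
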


\section{Proof of Theorem \ref{theorem1}}
The following conclusion is a variation of Kolmogorov's zero-one law. It states that for events satisfying specific asymptotic conditions in a sequence of independent random variables, the event is almost certain, that is, has probability 0 or 1.
\begin{theorem}\label{thm01}
Let $X_1,X_2,\cdots$ be a sequence of independent random variables. For $A\in\sigma(X_1,X_2,\cdots),$ if for any $n,$ there exists $A_n\in\sigma(X_n,X_{n+1},\cdots)$ such that $\mathbb{P}(A\,\Delta\,A_n)=0,$ then 
$$\mathbb{P}(A)=0\ \text{or}\ 1.$$
\end{theorem}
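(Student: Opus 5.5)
The plan is to reduce the statement to Kolmogorov's zero-one law by producing a tail event that agrees with $A$ up to a null set. Set $\mathscr{T}_n=\sigma(X_n,X_{n+1},\dots)$ and let $\mathscr{T}=\bigcap_n\mathscr{T}_n$ be the tail $\sigma$-algebra. Define
\[
A^*:=\limsup_{n\to\infty}A_n=\bigcap_{m\ge1}\bigcup_{n\ge m}A_n .
\]
I will show that $A^*\in\mathscr{T}$ and that $\mathbb{P}(A\,\Delta\,A^*)=0$. Kolmogorov's zero-one law then gives $\mathbb{P}(A^*)\in\{0,1\}$, and therefore $\mathbb{P}(A)=\mathbb{P}(A^*)\in\{0,1\}$.

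First, $A^*$ is a tail event. Fix $k$. For every $m\ge k$ and every $n\ge m$ we have $A_n\in\mathscr{T}_n\subset\mathscr{T}_k$, so $\bigcup_{n\ge m}A_n\in\mathscr{T}_k$. The sets $\bigcup_{n\ge m}A_n$ decrease in $m$, so the intersection over all $m\ge1$ equals the intersection over $m\ge k$, and that intersection lies in $\mathscr{T}_k$. Since $k$ was arbitrary, $A^*\in\mathscr{T}$.

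Second, $A^*$ agrees with $A$ up to a null set. We have
\[
A^*\setminus A\subset\bigcup_n(A_n\setminus A),
\qquad
A\setminus A^*\subset\bigcup_n(A\setminus A_n).
\]
The second inclusion holds because a point of $A$ outside $A^*$ lies outside $A_n$ for all large $n$, so it belongs to some $A\setminus A_n$. Every set on the right-hand sides is contained in some $A\,\Delta\,A_n$, which is null by hypothesis, and a countable union of null sets is null. Hence $\mathbb{P}(A\,\Delta\,A^*)=0$.

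No step here is difficult. The one point needing care is that a single $A_n$ cannot serve as the tail event: each $A_n$ is only $\mathscr{T}_n$-measurable, so we must pass to the $\limsup$ to land in the tail $\sigma$-algebra.

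An alternative is to argue by independence directly. Since $\mathbb{P}(A\,\Delta\,A_n)=0$, the event $A$ is independent of $\sigma(X_1,\dots,X_{n-1})$ for every $n$. A $\pi$–$\lambda$ argument then shows $A$ is independent of $\sigma(X_1,X_2,\dots)$, hence of itself, so $\mathbb{P}(A)=\mathbb{P}(A)^2$. I would nonetheless use the $\limsup$ construction above, since it relies only on the classical Kolmogorov law.
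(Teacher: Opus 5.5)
Your proof is correct and complete. $A^*=\limsup_n A_n$ lies in the tail $\sigma$-algebra, $A\,\Delta\,A^*$ is contained in the null set $\bigcup_n (A\,\Delta\,A_n)$, and Kolmogorov's zero-one law then gives $\mathbb{P}(A)=\mathbb{P}(A^*)\in\{0,1\}$.

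The paper gives no proof of this statement; it only cites it as a variation of Kolmogorov's zero-one law. Your $\limsup$ reduction supplies exactly that justification. Note that it does not use the hypothesis $A\in\sigma(X_1,X_2,\dots)$ beyond measurability. That hypothesis matters only in your alternative $\pi$--$\lambda$ argument, where it is what makes $A$ independent of itself.
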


\begin{proof}[Proof of Theorem \ref{theorem1}]
For any integer $k\geq1$,  denote
$$\mathcal{U}_{k}(\omega,\ell):=\big\{y\in X:\forall \,N\gg1,\,\exists\, \, n\in\{k,\cdots,N\},\,\,\,\text{s.t.}\  d(\omega_n,y)<\ell_N\big\}.$$
Then $\big(\mathcal{U}_{k}(\omega,\ell)\big)^{c}$ can be rewritten as
  $$\big\{y\in X:\exists \, \text{infinitely~many}\ N ,\ \text{such ~that~}\ \forall \,n\in\{k,\cdots,N\},\   d(\omega_n,y)\geq \ell_N\big\}.$$
For any $y\in \mathcal U(\omega,\ell)\setminus \mathcal U_{k}(\omega,\ell)$, there exists infinitely many  $N\geq1$ such that
  \[
  \min_{1\leq n\leq k-1}d(\omega_n,y)<\ell_N\,.
  \]
Since $(\ell_n)_{n\geq 1}$ is decreasing to 0, it gives that
  \[
  \min_{1\leq n\leq k-1}d(\omega_n,y)\leq\liminf_{N\to\infty}\ell_N=0\,.
  \]
  Then
  \[
  \mathcal U(\omega,\ell)\setminus \mathcal U_{k}(\omega,\ell)\ \subseteq\ \{\omega_1,\ldots,\omega_{k-1}\}\,.
  \]
Note that $\mathcal U_{k}(\omega,\ell) \subseteq \mathcal U(\omega,\ell)$. It follows that 
  \begin{align}\label{for1}
  \mathcal U_{k}(\omega,\ell)\ \subseteq\  \mathcal U(\omega,\ell)\ \subseteq\  \mathcal U_{k}(\omega,\ell)\cup\{\omega_1,\ldots,\omega_{k-1}\}.
  \end{align}

\noindent$\bullet$ \emph{$0$-$1$ law of the event $A$}.  
Given $x \in X$ and $k\geq1$, for $\{x\in\mathcal U(\omega,\ell)\}$ and $\{x\in\mathcal U_{k}(\omega,\ell)\}$, we have that
  \begin{align*}
  &~~~~\mathbb{P}\big(\big\{x\in\mathcal U(\omega,\ell)\big\}\,\Delta\, \big\{x\in\mathcal U_{k}(\omega,\ell)\big\}\big)\leq\mathbb{P}\big(x\in\{\omega_1,\ldots,\omega_{k-1}\}\big)\\
    &=\mathbb{P}\left(\bigcup\limits_{i=1}^{k-1}\{x=\omega_i\}\right)\leq \sum_{i=1}^{k-1} \mathbb{P}\big(\{x=\omega_i\}\big)\\
    &=(k-1)\cdot\mu(\{x\})=0\,.
  \end{align*}  
We observe that
\[
\big\{x\in\mathcal U_{k}(\omega,\ell)\big\}\in\sigma(X_k,X_{k+1},\cdot\cdot\cdot)\,.
\]
Combing Theorem~\ref{thm01}~, we get that for $x\in X$,  $\mathbb{P}\big(x\in\mathcal U(\omega,\ell)\big)=0$ or $1\,$.
 
 Therefore there are two cases:
   \medskip
  
  \emph{Case $1:$ there exists $x_0\in X$ such that $\mathbb{P}\big(x_0\in\mathcal U(\omega,\ell)\big)=0\,$.}
  Since 
  $$\{\mathcal U(\omega,\ell)=X\}\subset\{x_0\in \mathcal U(\omega,\ell)\},$$
  we have $\mathbb{P}( A) = 0.$ 
  \medskip

  \emph{Case $2:$ for any  $x\in X$, $\mathbb{P}\big(x\in\mathcal U(\omega,\ell)\big)=1$.} 
 By~\eqref{for1},
 $$\big\{x\in\mathcal{U}(\omega,\ell)\big\}=\big\{x\in\{\omega_1,\ldots,\omega_{k-1}\}\big\}\cup\big\{x\in\mathcal{U}_{k}(\omega,\ell)\big\},$$
thus
\begin{eqnarray}\label{ppxukoeq1}
    \mathbb{P}\big(x\in \mathcal U_{k}(\omega,\ell)\big)=1,\quad \forall\,x\in X,\ \ \forall\, k\geq1.
\end{eqnarray}

\iffalse
Assume that ${\mathcal N}\subseteq\mathbb{N}$, define a projection map by
\[
\pi_{{\mathcal N}}:X^{\mathbb{N}}\to X^{{\mathcal N}}:=\prod_{i\in{\mathcal N}}X.
\]
In particular, for $k\in\mathbb{N}$, let
\[
\pi_{1\leq i\leq k}:=\pi_{\{1,2,...,k\}},\qquad\pi_{i=k}:=\pi_{\{k\}},\qquad \pi_{i\geq k}:=\pi_{\{k,k+1,...\}},
\]
Define $ \mathbb{P}_{{\mathcal N}}:= \mathbb{P}\circ\pi_{{\mathcal N}}^{-1}$, and for $k\in\mathbb{N}$, let
\[
 \mathbb{P}_{1\leq i\leq k}:= \mathbb{P}_{\{1,2,...,k\}},\qquad \mathbb{P}_{i=k}:= \mathbb{P}_{\{k\}},\qquad \mathbb{P}_{i\geq k}:= \mathbb{P}_{\{k,k+1,...\}}\,.
\]
Then we rewrite \eqref{ppxukoeq1} as
\begin{eqnarray*}
     \mathbb{P}_{i\geq k}\Big(\pi_{i\geq k}\big\{\omega\in\Omega:x\in\mathcal{U}_k(\omega,\{r_n\})\big\}\Big)=1,\qquad\forall\,x\in X,\ \ \ \forall\, k\geq1.
\end{eqnarray*}
\fi
Put $\mathcal{F}_{k-1}:=\sigma(\omega_1,\dots,\omega_{k-1})$.
Therefore for $k\in\mathbb{N}$ and $1\leq j\leq k-1$,
\begin{eqnarray*}
   \mathbb{P}\big(\omega_j\in\mathcal{U}_k(\omega,\ell)\big)&=& \mathbb{E}\big(\mathbb{E}(1_{\{\omega_j\in \mathcal{U}_k(\omega,\ell)\}}|\mathcal{F}_{k-1})\big)=1.
   %\int \mathbb{P}_{i\geq k}\Big(\pi_{i\geq k}\big\{\omega\in\Omega:\omega_j=\omega_j^{'},\,\omega_j\in\mathcal{U}_k(\omega,\{r_n\})\big\}\Big)\,\mathrm{d} \mathbb{P}_{i=j}(\omega_j^{'})\\
%    &=&\int \mathbb{P}_{i\geq k}\Big(\pi_{i\geq k}\big\{\omega\in\Omega:x\in\mathcal{U}_k(\omega,\{r_n\})\big\}\Big)\,\mathrm{d}\mu(x)\\
 %   &=&1\,.
\end{eqnarray*}
%\begin{eqnarray*}
 %    \mathbb{P}\big(\omega_i\in\mathcal{U}_k(\omega,\{r_n\})\big)&=&\int \mathbb{P}_{i\geq k}\Big(\pi_{i\geq k}\big\{\omega\in\Omega:\omega_j=\omega_j^{'},\,\omega_j\in\mathcal{U}_k(\omega,\{r_n\})\big\}\Big)\,\mathrm{d} \mathbb{P}_{i=j}(\omega_j^{'})\\
%    &=&\int \mathbb{P}_{i\geq k}\Big(\pi_{i\geq k}\big\{\omega\in\Omega:x\in\mathcal{U}_k(\omega,\{r_n\})\big\}\Big)\,\mathrm{d}\mu(x)\\
 %   &=&1\,.
%\end{eqnarray*}
It follows that 
\begin{align}\label{for2}
  \mathbb{P}\Big(\{\omega_1,\ldots,\omega_{k-1}\}\subseteq \mathcal U_{k}(\omega,\ell)\Big)=1\,.
  \end{align}
Note that $\{\mathcal U_k(\omega,\ell) = X\}\subset A=\{\mathcal U(\omega,\ell) = X\}$, then %For $x\in X,$ %考虑事件 $A$ 与 $\{\mathcal U_k(\omega,\{r_n\}) = X\}$的对称差:
  \begin{align*} 
   & \ \ \ \ \mathbb{P}(A\ \Delta\ \{\mathcal U_k(\omega,\ell) = X\})=\mathbb{P}\big(A\backslash\{\mathcal U_k(\omega,\ell) = X\})\big)\\
   &\leq \mathbb{P}\big(\{\omega_1,\ldots,\omega_{k-1}\}\not\subseteq \mathcal U_{k}(\omega,\ell)\big).
 \end{align*}
By ~\eqref{for2}, we have
$$\mathbb{P}\big(A\ \Delta\ \{\mathcal U_k(\omega,\ell) =X\}\big)=0\,.$$
Note that for $k\geq1$,
\[
\big\{\mathcal U_k(\omega,\ell) =X\big\}\in\sigma(X_k,X_{k+1},\cdot\cdot\cdot)\,,
\]
It follows from Theorem~\ref{thm01} that $\mathbb{P}(A)=0$ or $1\,$.
\medskip

  \noindent $\bullet$ \emph{ $0$-$1$ law of the event  $B$}. Since $\dim_{\rm H}\{\omega_1,\dots,\omega_{k-1}\}=0$, we obtain that
  $$\left\{\mathrm{dim}_\mathrm{H} \ \mathcal{U}(\omega,\ell) = s_1\right\}=\left\{\mathrm{dim}_\mathrm{H}\ \mathcal{U}_k(\omega,\ell) = s_1\right\},$$  
Thus $$\mathbb{P}\big(\left\{\mathrm{dim}_\mathrm{H} \ \mathcal{U}(\omega,\ell) = s_1\right\}\, \Delta\, \left\{\mathrm{dim}_\mathrm{H} \ \mathcal{U}_k(\omega,\ell) = s_1\right\}\big)=0\,,$$
implying that $\mathbb{P}(B) = 0$ or $1\,$.
\medskip

\noindent $\bullet$ \emph{$0$-$1$ law of the event $C$}. Note that $\mu(\mathcal{U}(\omega,\ell))=\mu(\mathcal{U}_k(\omega,\ell)). $ Then 
$$\mathbb{P}\big(\left\{\mu(  \mathcal{U}(\omega,\ell) )= t_1\right\}\, \Delta\, \left\{\mu(\mathcal{U}_k(\omega,\ell)) = t_1\right\}\big)=0\,,$$
thus $\mathbb{P}(C) = 0$ or $1\,$.
%$\mu$的定义, 可知有限点的测度为0, 于是与事件 $B$的$0$-$1$律类似, 可得$\mathbb{P}(C) = 0$ 或者 $1\,$.
\end{proof}

\section{Proofs of Theorems \ref{theorem2}-\ref{theorem3}}
We will need the following theorem of \cite{galambos} to prove Theorem \ref{theorem2}.

\begin{theorem}[Theorem 4.3.1 of  \cite{galambos}]\label{gala}
Let $Y_1, Y_2, \dots$ be independent, identically distributed random variables with a common, nondegenerate, continuous distribution function $F$. Assume that sequences $(u_n)_{n\geq 1}$ and $(n(1-F(u_n)))_{n\geq 1}$ are both nondecreasing. Let $Z_n=\max \{Y_1, Y_2, \ldots, Y_n\}$. Then the probability
\begin{align*}\label{formula-G}
\mathbb{P}(Z_n\le u_n \ \text{\rm for infinitely many } n)=0,
\end{align*}
if and only if  
\[
\sum_{j=1}^\infty (1-F(u_j))=\infty \quad \text{and}\quad \sum _{j=1}^\infty (1-F(u_j))\exp(-j(1-F(u_j)))<\infty. 
\]
\end{theorem}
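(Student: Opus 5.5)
We would prove this in its original i.i.d.\ form, following the classical ``ends of runs'' argument. Write $p_n:=1-F(u_n)$ and $A_n:=\{Z_n\le u_n\}$. Since $F$ is continuous, the probability integral transform lets us assume $Y_i$ uniform on $[0,1]$. Then $\mathbb{P}(A_n)=(1-p_n)^n$. Because $u_n$ is nondecreasing, $p_n$ is nonincreasing, and by hypothesis $np_n$ is nondecreasing.

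\textbf{Step 1 (reduction to run ends).} Set $B_n:=A_n\cap\{Y_{n+1}>u_{n+1}\}$. If $\sum p_n=\infty$, then for every $m$,
\[\mathbb{P}(A_n\ \forall n\ge m)\le\prod_{n\ge m}(1-p_n)=0,\]
so a.s.\ $A_n$ fails infinitely often. Moreover, $A_n\cap A_{n+1}^c\subset B_n$, because $u_{n+1}\ge u_n$ forces $Y_{n+1}>u_{n+1}$ on that set. Hence $\{A_n \text{ i.o.}\}=\{B_n\text{ i.o.}\}$ a.s. By independence,
\[\mathbb{P}(B_n)=(1-p_n)^np_{n+1}\le p_ne^{-np_n},\]
and $\mathbb{P}(B_n)\ge c\,p_{n+1}e^{-np_n}$ once $p_n\le 1/2$.

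\textbf{Step 2 (sufficiency).} Assume both series conditions hold. Step 1 applies since $\sum p_n=\infty$. The upper bound gives $\sum\mathbb{P}(B_n)\le\sum p_ne^{-np_n}<\infty$, so the first Borel--Cantelli lemma gives $\mathbb{P}(B_n\text{ i.o.})=0$, hence $\mathbb{P}(A_n\text{ i.o.})=0$.

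\textbf{Step 3 (necessity).} First suppose $\sum p_n<\infty$. Then $Y_n\le u_n$ for all $n\ge m$ a.s., with $m$ random. Since $p_n\to0$, $u_n$ increases to the right endpoint of $F$, so eventually $u_n>Z_m$. Then $Z_n\le\max(Z_m,\max_{m<i\le n}u_i)\le u_n$ for large $n$, and $A_n$ holds eventually, a.s.

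Otherwise $\sum p_n=\infty$ but $\sum p_ne^{-np_n}=\infty$, and we must show $\mathbb{P}(B_n\text{ i.o.})>0$; Hewitt--Savage or Theorem~\ref{thm01} then upgrades this to $1$. We first replace $\sum p_ne^{-np_n}$ by $\sum\mathbb{P}(B_n)$. This needs $p_{n+1}\ge p_n\cdot n/(n+1)$, which follows from $(n+1)p_{n+1}\ge np_n$.

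\textbf{Main obstacle.} The $B_n$ are dependent, so we would apply the Kochen--Stone (Erdős--Rényi) version of Borel--Cantelli. This requires
\[\sum_{j<k\le N}\mathbb{P}(B_jB_k)\le C\Big(\sum_{n\le N}\mathbb{P}(B_n)\Big)^2+O\Big(\sum_{n\le N}\mathbb{P}(B_n)\Big).\]
For $j<k$, the event $B_j$ forces $Y_{j+1}>u_{j+1}$, which makes $A_k$ impossible whenever $u_k<Y_{j+1}$. Conditioning on $Y_{j+1}\le u_k$ gives
\[\mathbb{P}(B_jB_k)\le \mathbb{P}(B_j)\,(1-p_k)^{k-j-1}p_{k+1}.\]
We would compare $(1-p_k)^{k-j-1}$ with $(1-p_k)^{k}e^{jp_k}$ and split according to whether $jp_k\le 1$ or $jp_k>1$. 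In the first range this yields $\mathbb{P}(B_j)\mathbb{P}(B_k)$ up to a constant. In the second range, the factor $e^{-(k-j)p_k}$ summed over $k$ is $O(1/p_k)$, and monotonicity of $np_n$ converts this to an $O(\mathbb{P}(B_j))$ contribution. Making this last estimate rigorous is where we expect the real work to lie. If it resists, we would first pass to a sparse subsequence along which $np_n$ roughly doubles, and prove quasi-independence there.
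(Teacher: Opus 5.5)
The paper does not prove this statement. It is quoted from Galambos's book and used as a black box in the proof of Theorem~\ref{theorem2}. Your argument is therefore a genuinely different, self-contained route: reduce $\{A_n\text{ i.o.}\}$ to the run-end events $B_n$, use the first Borel--Cantelli lemma for sufficiency, and use a Kochen--Stone second-moment bound for necessity. The citation buys brevity. Your route shows which hypotheses do what: the ``if'' direction uses only that $u_n$ is nondecreasing, and the monotonicity of $np_n$ enters only in the correlation estimate. Keep your Hewitt--Savage remark. As written, the statement only gives $\mathbb{P}(A_n\text{ i.o.})>0$ when the series conditions fail, whereas the proof of Theorem~\ref{theorem2} uses that this probability equals $1$ for $y\notin E(\ell)$. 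Since $Z_n$ is symmetric in $Y_1,\dots,Y_k$ for $n\ge k$, the event is exchangeable, so the upgrade to $1$ is legitimate.

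Two steps need repair, and neither is fatal.

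First, $\mathbb{P}(B_n)\ge c\,p_{n+1}e^{-np_n}$ does not follow from $p_n\le 1/2$ alone: for $p_n\equiv 1/2$, $2^{-n}$ is far smaller than $e^{-n/2}$. You need $np_n^2=O(1)$. This is harmless. If $np_n^2>1$ then $np_n>\sqrt n$ and $p_ne^{-np_n}\le e^{-\sqrt n}$, so discarding those indices keeps $\sum p_ne^{-np_n}=\infty$. On the remaining large $n$, $\mathbb{P}(B_n)\ge (2e)^{-1}p_ne^{-np_n}$.

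Second, your sketch of the main estimate (``summed over $k$ is $O(1/p_k)$'') is not yet an estimate, since $k$ is the summation index. It closes directly, with no subsequence, if you split according to $k\le 2j$ or $k>2j$.
\begin{itemize}
\item For $j<k\le 2j$, monotonicity of $kp_k$ gives $p_k\ge p_j/2$, and also $p_{k+1}\le p_j$. Hence
\[
\sum_{j<k\le 2j}(1-p_k)^{k-j-1}p_{k+1}\le p_j\sum_{t\ge 0}e^{-tp_j/2}\le 4 .
\]
\item For $k>2j$ with $jp_k>1$, put $t=k-j-1\ge j$. The map $x\mapsto xe^{-tx}$ decreases on $x>1/t$, and $p_k>1/j\ge 1/t$. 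So $p_{k+1}e^{-tp_k}\le j^{-1}e^{-t/j}$, which sums over $t\ge j$ to at most $2/e$.
\end{itemize}
So these pairs contribute $O\bigl(\sum_{j\le N}\mathbb{P}(B_j)\bigr)$. For $jp_k\le 1$ and $p_k\le 1/2$, your comparison gives $\mathbb{P}(B_jB_k)\le e^{4}\,\mathbb{P}(B_j)\mathbb{P}(B_k)$, up to finitely many pairs. Kochen--Stone then yields $\mathbb{P}(B_n\text{ i.o.})\ge (2e^4)^{-1}>0$, which is all the ``only if'' direction needs.
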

It is ready to prove Theorem \ref{theorem2}.

\begin{proof}[Proof of Theorem \ref{theorem2}]
Given $y\in X$ and $n\ge1$, let 
\[Y_n=Y_{n,y}:=(d(\omega_n,y))^{-1}\quad{\rm and}\quad  u_n=l_n^{-1}.\] 
Let $Z_n=\max \{Y_1, Y_2, \ldots, Y_n\}$.
Since $(\omega_n)_{n\geq 1}$ are  independent and identically distributed, the  random variables $(Y_n)_{n\geq 1}$ are also   independent and identically distributed with a common distribution $F$, that is,
\begin{align*}
F(x)&=\mathbb{P}(Y_n<x)=\mathbb{P}(d(\omega_n,y)>x^{-1})\\
&=1-\mathbb{P}(\omega_n\in B(y,x^{-1}))=1-\mu(B(y,x^{-1})),
\end{align*}
which is nondegenerate and continuous due to the assumption \eqref{mu}. Since $1-F(u_n)=\mu(B(y,l_n))$, sequences $(u_n)_{n\geq 1}$ and $(n(1-F(u_n)))_{n\geq 1}$ are both nondecreasing. Note that $y\in E(\ell)$ if and only if
\[\sum_{n=1}^\infty (1-F(u_n))=\sum_{n=1}^\infty \mu(B(y,l_n))=\infty,\]
and
\begin{align*}
&\sum _{n=1}^\infty (1-F(u_n))\exp(-n(1-F(u_n)))\\
=&\sum_{n=1}^\infty\mu(B(y,l_n)) \exp(-n\mu(B(y,l_n)))<\infty.
\end{align*}
Applying Theorem \ref{gala}, for $y\in E(\ell)$,
\[\mathbb{P}(Z_n\le u_n\ \text{\rm for infinitely many } n)=0,\]
and for $y\notin E(\ell)$,
\[\mathbb{P}(Z_n\le u_n\ \text{\rm for infinitely many } n)=1.\]
We observe that $y\in \mathcal{U}(\omega,\ell)$ if and only if  $Z_n>u_n$ eventually, that is, $Z_n\le u_n$ hold for at most finitely many $n\ge1$.
Then we conclude that
\begin{equation*}
\begin{split}
&\int \mu(\mathcal{U}(\omega,\ell)\cap E(\ell))d \mathbb{P}=\int\int 1_{\mathcal{U}(\omega,\ell)}(y)1_{E(\ell)}(y)d\mu(y)d\mathbb{P}\\
=&\int_{E(\ell)}\left(\int 1_{\mathcal{U}(\omega,\ell)}(y)d\mathbb{P}\right)d\mu(y)=\int 1_{E(\ell)}(y)d\mu(y)=\mu(E(\ell)),
\end{split}
\end{equation*}
and 
\begin{equation*}
\begin{split}
&\int \mu(\mathcal{U}(\omega,\ell)\cap E(\ell)^c)d \mathbb{P}=\int\int 1_{\mathcal{U}(\omega,\ell)}(y)1_{E(\ell)^c}(y)d\mu(y)d\mathbb{P}\\
=&\int_{E(\ell)^c}\left(\int 1_{\mathcal{U}(\omega,\ell)}(y)d\mathbb{P}\right)d\mu(y)=0.
\end{split}
\end{equation*}
Thus 
\begin{equation*}
\begin{split}
\int \mu(\mathcal{U}(\omega,\ell))d \mathbb{P}&=\int \mu(\mathcal{U}(\omega,\ell)\cap E(\ell))d \mathbb{P}+\int \mu(\mathcal{U}(\omega,\ell)\cap E(\ell)^c)d \mathbb{P}\\
&=\int \mu(\mathcal{U}(\omega,\ell)\cap E(\ell))d \mathbb{P}=\mu(E(\ell)),
\end{split}
\end{equation*}
which implies that with probability one, $\mu(\mathcal{U}(\omega,\ell))= \mu(\mathcal{U}(\omega,\ell)\cap E(\ell))=\mu(E(\ell))$ holds.
\end{proof}

\begin{proof}[Proof of Theorem \ref{theorem3}]

For $n\ge1$ and $x\in [0,1]$, we have $(1-x)^n\ge 1-nx$, then 
\begin{equation*}
\begin{split}
& \mathbb{P}\left(B(\omega_{n+1},l_{n+1})\cap B(\omega_k,l_n)=\emptyset ~{\rm for ~all}~k\le n\right)\\
=&\mathbb{P}\left(\bigcap_{k=1}^n\{\omega_k\notin B(\omega_{n+1},l_n+l_{n+1})\}\right)\\
=&\idotsint\prod_{k=1}^n(1-1_{B(\omega_{n+1},l_n+l_{n+1})}(\omega_k)) d \mathbb{P}(\omega_1)\cdots d \mathbb{P}(\omega_{n+1})\\
=&\int \prod_{k=1}^n(1-\mu(B(\omega_{n+1},l_n+l_{n+1})))d \mathbb{P}(\omega_{n+1})\\
\ge&\int (1-n\mu(B(\omega_{n+1},l_n+l_{n+1})))d \mathbb{P}(\omega_{n+1})\\
=&1-n\int \int 1_{B(\omega_{n+1},l_n+l_{n+1})}(y)d \mu(y)d \mathbb{P}(\omega_{n+1})\\
=&1-n\int \mu(B(y,l_n+l_{n+1})))d \mu(y).
\end{split}
\end{equation*} 
Thus 
\[ \mathbb{P}\left(B(\omega_{n+1},l_{n+1})\cap \bigcup_{k=1}^nB(\omega_k,l_n)\ne\emptyset\right)\le n\int \mu(B(y,l_n+l_{n+1})))d \mu(y).\]
Since $\sum_{n=1}^\infty n\int \mu(B(y,l_n+l_{n+1})))d \mu(y)<\infty$, then by Bore-Cantelli lemma, almost surely, there is some
$m\ge1$ such that for all $n\ge m$, $B(\omega_{n+1},l_{n+1})$ do not intersect $\bigcup_{k=1}^nB(\omega_k,l_n)$. 
Then almost surely,  for $n\ge m$
\begin{equation*}
\bigcup_{k=1}^nB(\omega_k,l_n)\cap \bigcup_{k=1}^{n+1}B(\omega_k,l_{n+1})=\bigcup_{k=1}^nB(\omega_k,l_{n+1}),
\end{equation*}
which implies that for $p\ge m$
\[\bigcap_{n=p}^{\infty}\bigcup_{k=1}^nB(\omega_k,l_n)=\{\omega_k:1\le k\le p\}.\]
Therefore almost surely,
\[\mathcal{U}(\omega,\ell)=\bigcup_{p=m}^\infty\bigcap_{n=p}^{\infty}\bigcup_{k=1}^nB(\omega_k,l_n)=\{\omega_k:k\ge1\}.\]
\end{proof}

\section{Proof of Theorem \ref{theorem4}}
\begin{notation}
For $E\subset X$ and $\delta>0$, $E_\delta=\{y\in X\colon d(x,y)<\delta, x\in E\}.$ 
\end{notation}

\subsection{The upper bound on $\dimh \mathcal{U}(\omega,\ell)$}

\hfill \\

Note that 
\[\left\{y\in X\colon \underline{d}_\mu(y)>\frac{1}{\alpha}\right\}=\bigcap_{m=1}^\infty\left\{y\in X\colon  \underline{d}_\mu(y)>\frac{1}{\alpha}+\frac{1}{m}\right\}.\]
For $m\ge1$, $b\in(\frac{1}{\alpha},\frac{1}{\alpha}+\frac{1}{m})$, and $n\ge1$, put
\[A_{b,n}=\{y\in X\colon \mu(B(y,r))<r^b~{\rm for~all}~r<2^{-n}\}.\]
By definition of $\underline{d}_\mu(y)$ and $A_{b,n}$, 
\begin{equation}\label{abn}
\left\{y\in X\colon  \underline{d}_\mu(y)>\frac{1}{\alpha}+\frac{1}{m}\right\}\subset \bigcup_{n=1}^\infty A_{b,n}.
\end{equation}

Now we claim that with probability one, 
\begin{equation}\label{upper}
\dimh (\mathcal{U}(\omega,\ell)\cap A_{b,n} )=0
\end{equation}
holds for all $b,~n$. Then almost surely
\begin{equation}\label{upper1}
\begin{split}
&\dimh \left(\mathcal{U}(\omega,\ell) \cap\left\{y\in X\colon \underline{d}_\mu(y)>\frac{1}{\alpha}\right\} \right)\\
=&\dimh \left(\mathcal{U}(\omega,\ell) \cap \bigcap_{m=1}^\infty\left\{y\in X\colon  \underline{d}_\mu(y)>\frac{1}{\alpha}+\frac{1}{m}\right\} \right)\\
\overset{\eqref{abn}}{\le}& \dimh \left(\mathcal{U}(\omega,\ell) \cap \bigcup_{n=1}^\infty A_{b,n} \right)\\
=&\sup_{n\ge1} \dimh \left(\mathcal{U}(\omega,\ell) \cap A_{b,n} \right)\overset{\eqref{upper}}{=} 0.\\
\end{split}
\end{equation}
It follows that almost surely
\begin{equation*}\label{upper2}
\begin{split}
\dimh \left(\mathcal{U}(\omega,\ell)\right)&=\dimh \Big(\mathcal{U}(\omega,\ell) \cap \Big(\left\{y\in X\colon \underline{d}_\mu(y)>\frac{1}{\alpha}\right\} \\
&\quad \cup \left\{y\in X\colon \underline{d}_\mu(y) \le\frac{1}{\alpha}\right\}\Big )\Big)\\
&\overset{\eqref{upper1}}{=}\dimh \left(\mathcal{U}(\omega,\ell) \cap \left\{y\in X\colon  \underline{d}_\mu(y)\le \frac{1}{\alpha}\right\} \right)\\
 &\le \dimh \left\{y\in X\colon  \underline{d}_\mu(y)\le \frac{1}{\alpha}\right\},
\end{split}
\end{equation*}
which gives the upper bound in theorem.

Now we show the claim, that is,
\[\dimh (\mathcal{U}(\omega,\ell)\cap A_{b,n} )=0,~\forall b,n\quad a.s..\]
Notice that for any $p\ge1$,
\begin{equation}\label{covering}
\begin{split}
\mathcal{U}(\omega,\ell)&=\bigcup_{l=p}^\infty\bigcap_{N=l}^\infty\bigcup_{k=1}^NB(\omega_k,\ell_N)\subset \bigcup_{l=p}^\infty\bigcap_{j=l}^\infty\bigcup_{k=1}^{2^j}B(\omega_k,\ell_{2^j}).
\end{split}
\end{equation}
then we take $l$ large enough such that $\ell_{2^l}^\alpha+\ell_{2^{l+1}}^\alpha<2^{-n-1}$.

For $i\ge l$, denote
\[G_{l,i}:=\left(\bigcap_{j=l}^i\bigcup_{k=1}^{2^j} B(\omega_k,\ell_{2^j} )\right)\cap A_{b,n},\]
then $G_{l,i}\subset G_{l,i-1}$. 

For $i=l$, denote 
\[I_l:=\{1\le k\le 2^l\colon B(\omega_k,\ell_{2^l} )\cap A_{b,n}\ne\emptyset\},\]
and for $i>l$, denote
\[I_i:=\{1\le k\le 2^i\colon B(\omega_k,\ell_{2^i} )\cap G_{l,i-1}\ne\emptyset\}.\]
We define 
\[\hat{G}_{l,i}:=\bigcup_{k\in I_i}B(\omega_k,\ell_{2^i}),\]
then $G_{l,i}\subset \hat{G}_{l,i}$.
We divide $I_i$ into two parts 
\[J_{1,i}:=\{1\le k\le 2^{i-1}\colon B(\omega_k, \ell_{2^{i-1}})\cap G_{l,i-1}\ne\emptyset\},\]
\[J_{2,i}:=\{2^{i-1}+1\le k\le 2^i\colon B(\omega_k, \ell_{2^i})\cap G_{l,i-1}\ne\emptyset\},\]
then $I_i=J_{1,i}\cup J_{2,i}$. For any $k\in J_{1,i+1}$, since $(\ell_j)$ is decreasing, we get 
\[\emptyset \ne  B(\omega_k, \ell_{2^{i+1}})\cap G_{l,i}\subset  B(\omega_k, \ell_{2^i})\cap G_{l,i-1}, \]
which gives that $k\in I_i$, then we have
\[J_{1,i+1}\subset I_i.\]
Therefore
\begin{equation}\label{estimateofnum}
 \# I_{i+1}\le \#I_i+\#J_{2,i+1}.
 \end{equation}
Let $N_i:=\#I_i$ and $M_i:=\#J_{2,i}$. Now we shall estimate $\mathbb{E}(N_i)$ and $\mathbb{E}(M_i)$. 

By the definition of $M_i$, we get 
\begin{equation*}
\begin{split}
M_i=\sum_{k= 2^{i-1}+1}^{2^i}1_{(G_{l,i-1})_{\ell_{2^i}}}(\omega_k)\le \sum_{k= 2^{i-1}+1}^{2^i}1_{(\hat{G}_{l,i-1})_{\ell_{2^i}}}(\omega_k).
\end{split}
\end{equation*}
Then 
\begin{equation*}
\begin{split}
\mathbb{E}(M_i)\le \int 2^i1_{(\hat{G}_{l,i-1})_{\ell_{2^i}}}(\omega_k)d\mathbb{P}\le \int 2^i\sum_{j\in I_{i-1}}1_{B(\omega_j,\ell_{2^i}+\ell_{2^{i-1}})}(\omega_k)d\mathbb{P}.
\end{split}
\end{equation*}
Since $\{\omega_j\colon j\in I_{i-1}\}$ and $\{\omega_k\colon 2^{i-1}+1\le k\le 2^i\}$  are independent, then 
\begin{equation*}
\begin{split}
\mathbb{E}(M_i)&=\mathbb{E}(\mathbb{E}\left(M_i\mid N_{i-1})\right) \le \mathbb{E}\left(2^i\mathbb{E}\left(\sum_{j\in I_{i-1}}1_{B(\omega_j,\ell_{2^i}+\ell_{2^{i-1}})}(\omega_k)\mid N_{i-1}\right)\right)\\
&=\mathbb{E}\left(2^i\sum_{j\in I_{i-1}}\mu\left(B(\omega_j,\ell_{2^i}+\ell_{2^{i-1}})\right)\right).
\end{split}
\end{equation*}
For $j\in I_{i-1}$, we have $B(\omega_j,\ell_{2^{i-1}})\cap A_{b,n}\ne\emptyset$, implying that there exists $y\in B(\omega_j,\ell_{2^i}+\ell_{2^{i-1}})$ such that 
\[B(\omega_j,\ell_{2^i}+\ell_{2^{i-1}})\subset B(y,2(\ell_{2^i}+\ell_{2^{i-1}})),\]
and by definition of $A_{b,n}$ and the choice of $l$,
\[\mu(B(\omega_j,\ell_{2^i}+\ell_{2^{i-1}}))\le 2^b(\ell_{2^i}+\ell_{2^{i-1}})^b.\] 
It follows that 
\begin{equation*}
\begin{split}
\mathbb{E}(M_i)&\le \mathbb{E}\left(2^iN_{i-1}2^b(\ell_{2^i}+\ell_{2^{i-1}})^b\right)\\
&=2^i 2^b(\ell_{2^i}+\ell_{2^{i-1}})^b\mathbb{E}(N_{i-1})\\
&=2^b(1+2^\alpha)^b2^{(1-\alpha b)i}\mathbb{E}(N_{i-1}).
\end{split}
\end{equation*}
Denote $c_i:=2^b(1+2^\alpha)^b2^{(1-\alpha b)i}\le c_l$. Combining the inequality \eqref{estimateofnum},
\begin{equation*}
\begin{split}
\mathbb{E}(N_i)&\le (1+c_l)\mathbb{E}(N_{i-1})\le (1+c_l)^{i-l} \mathbb{E}(N_{l})\le (1+c_l)^{i-l}2^{l}.
\end{split}
\end{equation*}

Note that for any $\varepsilon>0$,
\begin{equation*}
\begin{split}
\mathbb{P}\left(N_i> (1+\varepsilon)^i (1+c_l)^{i-l}2^{l}\right)&\le \mathbb{P}(N_i>  (1+\varepsilon)^i\mathbb{E}(N_i))\\
&\le \frac{1}{(1+\varepsilon)^i}.
\end{split}
\end{equation*}
Since $\sum_{i=l}^{\infty}\frac{1}{(1+\varepsilon)^i}<\infty$, by Borel-Cantelli lemma, we have
\[\mathbb{P}\left(N_i> (1+\varepsilon)^i (1+c_l)^{i-l}2^{l}\,i.o.\right)=0,\]
which means that, almost surely, there exists $i_0\ge l$ such that $i\ge i_0$
\[N_i \le (1+\varepsilon)^i (1+c_l)^{i-l}2^{l}.\]
Given $p\gg 1$, for $l\ge p$, put
\begin{equation*}\label{covering1}
\begin{split}
G_l:=\bigcap_{j=l}^\infty\bigcup_{k=1}^{2^j}B(\omega_k,\ell_{2^j})\cap A_{b,n}.
\end{split}
\end{equation*}
Note that $G_l\subset G_{l,i}\subset \hat{G}_{l,i}.$
Then $\{ B(\omega_k,\ell_{2^i})\colon k\in I_i\}$ is a covering of $G_l$. It follows that almost surely,
\begin{equation*}
\begin{split}
\dimh G_l&\le \underline{\dim}_{\rm B} G_l\le \liminf_{i\to\infty}\frac{\log N_{\ell_{2^i}}(G_l)}{-\log \ell_{2^i}}\\
&\le \liminf_{i\to\infty}\frac{\log((1+\varepsilon)^i (1+c_l)^{i-l}2^{l})}{-\log \ell_{2^i}}\\
&=\frac{\log(1+\varepsilon)+\log (1+c_l)}{\alpha\log2}\le \frac{\log(1+\varepsilon)+\log (1+c_p)}{\alpha\log2}.
\end{split}
\end{equation*}
It implies that 
\begin{equation*}
\begin{split}
\dimh (\mathcal{U}(\omega,\ell)\cap A_{b,n})\le\dimh \bigcup_{l=p}^\infty G_l&=\sup_{l\ge p}\dimh G_l \le \frac{\log(1+\varepsilon)+\log (1+c_p)}{\alpha\log2},
\end{split}
\end{equation*}
letting $\varepsilon\to0$ and $p\to\infty$, then we obtain that $\dimh (\mathcal{U}(\omega,\ell)\cap A_{b,n})=0$ a.s..

Now we finish the proofs of the claim and the upper bound in Theorem \ref{theorem4}.
\subsection{The lower bound on $\dimh \mathcal{U}(\omega,\ell)$}
\subsubsection{Hitting times}
As we shall see, we use hitting times to investigate the lower bound on $\dimh \mathcal{U}(\omega,\ell)$, and we will also see that hitting times have connection to local dimension of measures.

For a sequence $\omega=(\omega_n)_n$ of random variables, $y\in X$ and $r>0$, the {\it first hitting time} of $B(y,r)$ by $\omega$ is defined as 
\[\tau(\omega,y,r):=\inf\{ n\ge1\colon \omega_n\in B(y,r)\}.\]
For $\omega=(\omega_n)_n$ and $y\in X$, let
\begin{align*}
\overline{H}(\omega,y)&:= \limsup_{r\searrow 0} \frac{\log \tau(\omega,y,r)}{-\log r} \\
\underline{H}(\omega,y)&:= \liminf_{r\searrow 0} \frac{\log \tau(\omega,y,r)}{-\log r} 
\end{align*}
\begin{remark}

If there exists $r_0>0$ such that $B(y,r_0)\cap \{\omega_n:n\ge1\}=\emptyset$, then for any $0<r<r_0$,   $\tau(\omega,y,r)=\infty$, and we let $\overline{H}(\omega,y)=\underline{H}(\omega,y)=\infty.$ 

It worth noticing that in the definitions of $\overline{H}(\omega,y),\,\underline{H}(\omega,y)$, it suffices to consider these limits as $r\to0$ through any decreasing sequence  $(r_n)_n$ with $r_{n+1}/r_n\ge c$ for some constant $0<c<1$.

%If there exists $n_0\ge1$ such that $y=\omega_{n_0}$ if and only if the hitting time sequence 
\end{remark}

\begin{lemma}\label{subset}
For $\alpha>0$, let $\ell=(n^{-\alpha})_n$. Then 
\[\left\{y\in X\colon \overline{H}(\omega,y)<\frac{1}{\alpha}\right\}\subset \mathcal{U}(\omega,\ell)\subset \left\{y\in X\colon \overline{H}(\omega,y)\le \frac{1}{\alpha}\right\}.\]
\end{lemma}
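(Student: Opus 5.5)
The key observation is a reformulation of membership in $\mathcal{U}(\omega,\ell)$ via hitting times. For fixed $y$ and $N$, the condition "$\exists\,1\le n\le N$ with $d(\omega_n,y)<\ell_N$" is exactly $\tau(\omega,y,\ell_N)\le N$. So $y\in\mathcal{U}(\omega,\ell)$ if and only if $\tau(\omega,y,N^{-\alpha})\le N$ for all sufficiently large $N$. Writing $r=N^{-\alpha}$, this reads $\tau(\omega,y,r)\le r^{-1/\alpha}$ along the sequence $r_N=N^{-\alpha}$. Since $r_{N+1}/r_N=(N/(N+1))^\alpha\ge 2^{-\alpha}$, the Remark lets us compute $\overline{H}(\omega,y)$ as
\[
\overline{H}(\omega,y)=\limsup_{N\to\infty}\frac{\log\tau(\omega,y,N^{-\alpha})}{\alpha\log N}.
\]
Both inclusions then follow by comparing this limsup with $1/\alpha$.

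\textbf{First inclusion.} Suppose $\overline{H}(\omega,y)<1/\alpha$, and pick $\beta$ with $\overline{H}(\omega,y)<\beta<1/\alpha$. For all large $N$ we then have $\tau(\omega,y,N^{-\alpha})\le N^{\alpha\beta}$. Since $\alpha\beta<1$, this gives $\tau(\omega,y,N^{-\alpha})\le N$. In particular $\tau$ is finite, so there is $n\le N$ with $d(\omega_n,y)<\ell_N$, and hence $y\in\mathcal{U}(\omega,\ell)$.

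In the degenerate case where $\tau=\infty$ for small $r$, the convention gives $\overline{H}=\infty$, so such $y$ do not satisfy the hypothesis. Similarly, if $y=\omega_{n_0}$ for some $n_0$, then $\tau$ is bounded, $\overline{H}=0$, and $y\in\mathcal{U}(\omega,\ell)$ trivially. Neither case causes trouble.

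\textbf{Second inclusion.} Let $y\in\mathcal{U}(\omega,\ell)$. For all $N\ge N_0$ we have $\tau(\omega,y,N^{-\alpha})\le N$, so
\[
\frac{\log\tau(\omega,y,N^{-\alpha})}{\alpha\log N}\le\frac1\alpha \quad\text{for } N\ge N_0.
\]
Taking the limsup gives $\overline{H}(\omega,y)\le 1/\alpha$. Again $\tau$ is finite here, so the infinite convention never applies.

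\textbf{Main obstacle.} The only real care needed is justifying the passage from the continuous parameter $r\searrow0$ to the sequence $r_N=N^{-\alpha}$, that is, the Remark. I would verify it directly using monotonicity of $\tau$ in $r$. For $r_{N+1}\le r\le r_N$,
\[
\tau(\omega,y,r_N)\le\tau(\omega,y,r)\le\tau(\omega,y,r_{N+1}),
\]
while $-\log r$ is pinned between $-\log r_N$ and $-\log r_{N+1}$, whose ratio tends to $1$. Hence the limsup over $r$ coincides with the limsup over the sequence $r_N$.
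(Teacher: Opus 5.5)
Your proof is correct and follows the paper's argument: both inclusions come from rewriting $y\in\mathcal{U}(\omega,\ell)$ as $\tau(\omega,y,N^{-\alpha})\le N$ for all large $N$ and then comparing the resulting $\limsup$ with $1/\alpha$. The only difference is in how the Remark is handled. You verify directly, via monotonicity of $\tau$ in $r$, the passage from $r\searrow 0$ to the sequence $r_N=N^{-\alpha}$; the paper invokes this step implicitly in the second inclusion, and in the first inclusion works with continuous $r<r_0$ instead.
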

\begin{proof}
First we prove the second inclusion. For $y\in \mathcal{U}(\omega,\ell)$, there exists $m\gg1$ such that for $ j\ge m$, there is some $1\le n \le j$, $y\in B(\omega_n,l_j)$. It implies that 
\[\tau(\omega, y,l_j)\le j.\]
Then we get 
\begin{align*}
\overline{H}(\omega,y)&=\limsup_{j\to\infty}\frac{\log \tau(\omega,y,l_j)}{-\log l_j}\le \frac{1}{\alpha}.
\end{align*}

It remains to prove the first inclusion. Suppose that $\overline{H}(\omega,y)<\frac{1}{\alpha}$.  There exists $r_0>0$ such that for all $0<r<r_0$, we have 
\[\tau(\omega, y,r)<r^{-\frac{1}{\alpha}}.\]
Take $N_0\ge1$ with $l_{N_0}<r_0$. Then for all $N\ge N_0$,
\[n_N:=\tau(\omega,y,l_N)<l_N^{-\frac{1}{\alpha}}=N.\]
The definition of $\tau(\omega,y,r)$ implies that $\omega_{n_N}\in B(y,l_N)$. Thus 
\[y\in \bigcup_{N_0\ge1}\bigcap_{N\ge N_0}\bigcup_{n_N=1}^NB(\omega_{n_N},l_N)=\mathcal{U}(\omega,\ell).\]

\end{proof}

The following lemma shows the relation between the hitting times and the upper local dimension of $\mu$.
\begin{lemma}\label{equality}
Let $y\in X$.  Suppose that $\overline{H}(\omega,y), \overline{d}_\mu(y)<\infty$. Then almost surely we have
\[\overline{H}(\omega,y)=\overline{d}_\mu(y).\] 
\end{lemma}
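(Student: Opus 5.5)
We work along the geometric sequence $r_j=2^{-j}$; by the Remark this is enough to compute $\overline{H}(\omega,y)$. Set $p_j:=\mu(B(y,r_j))$, which is positive because $\overline{d}_\mu(y)<\infty$ forces $y\in{\rm supp}\,\mu$. By independence the hitting time $\tau_j:=\tau(\omega,y,r_j)$ is geometric with parameter $p_j$:
\[
\mathbb{P}(\tau_j>n)=(1-p_j)^n .
\]
Write $d:=\overline{d}_\mu(y)$. The goal is to prove $\overline{H}\le d$ and $\overline{H}\ge d$ almost surely, each via Borel--Cantelli along $(r_j)$.

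\emph{Upper bound.} Fix $\varepsilon>0$. For all large $j$ we have $p_j\ge r_j^{d+\varepsilon}$ by the definition of $\overline{d}_\mu$. Hence
\[
\mathbb{P}\big(\tau_j>r_j^{-(d+2\varepsilon)}\big)\le (1-p_j)^{r_j^{-(d+2\varepsilon)}}\le \exp\big(-r_j^{-\varepsilon}\big)=\exp(-2^{j\varepsilon}),
\]
which is summable in $j$. By Borel--Cantelli, almost surely $\tau_j\le r_j^{-(d+2\varepsilon)}$ eventually, so $\overline{H}(\omega,y)\le d+2\varepsilon$. Letting $\varepsilon\to0$ along a countable sequence gives $\overline{H}\le d$ a.s.

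\emph{Lower bound.} Here only infinitely many good scales are available. There is a subsequence $j_m$ with $p_{j_m}\le r_{j_m}^{d-\varepsilon}$, and we may choose it sparse enough that $\sum_m r_{j_m}^{\varepsilon}<\infty$. Then
\[
\mathbb{P}\big(\tau_{j_m}\le r_{j_m}^{-(d-2\varepsilon)}\big)\le r_{j_m}^{-(d-2\varepsilon)}p_{j_m}\le r_{j_m}^{\varepsilon},
\]
using the union bound $\mathbb{P}(\tau\le n)\le np$. This is summable, so almost surely $\tau_{j_m}>r_{j_m}^{-(d-2\varepsilon)}$ for all large $m$. Hence $\overline{H}(\omega,y)\ge d-2\varepsilon$, and letting $\varepsilon\to0$ gives $\overline{H}\ge d$.

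The subsequence in the lower bound must be taken along radii where $\mu(B(y,r))\le r^{d-\varepsilon}$ at arbitrarily small $r$. Passing from general small $r$ to the dyadic radii $r_j$ costs only a constant factor, because balls are monotone in the radius; this is the main technical point. The case $d=0$ is trivial for the lower bound. The hypothesis $\overline{H}<\infty$ is not actually needed, since $\tau_j<\infty$ almost surely whenever $p_j>0$.
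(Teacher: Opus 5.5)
Your proof is correct and follows the paper's argument: Borel--Cantelli with the geometric tail $(1-p)^n\le e^{-np}$ gives $\overline{H}\le\overline{d}_\mu(y)$, and Borel--Cantelli with the union bound $\mathbb{P}(\tau\le n)\le np$, along dyadic radii where $\mu(B(y,r))\le r^{\overline{d}_\mu(y)-\varepsilon}$, gives $\overline{H}\ge\overline{d}_\mu(y)$. You differ only in using $r_j=2^{-j}$ for the upper bound as well (the paper uses $r_n=n^{-1/(1+\varepsilon)}$) and in spelling out the passage from arbitrary small radii to dyadic ones, which the paper leaves implicit; one cosmetic fix is that $\mathbb{P}(\tau_j>x)=(1-p_j)^{\lfloor x\rfloor}\le e\cdot e^{-p_jx}$ rather than $\le(1-p_j)^{x}$, which costs only a harmless factor $e$.
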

\begin{proof}
Given $\varepsilon>0$, let $r_n=n^{-\frac{1}{1+\varepsilon}}$, then $\inf\{r_{n+1}/r_n\}>\frac{1}{2}$. It follows from the definition of $\overline{d}_\mu(y)$ that there exists $N\ge1$ such that $n\ge N$
\[\mu(y,r_n)>r_n^{\overline{d}_\mu(y)+\varepsilon}.\]
For $n\ge N$
\begin{align*}
&\mathbb{P}\{\tau(\omega,y,r_n)\ge r_n^{{-\overline{d}_\mu(y)-2\varepsilon}}\}=\mathbb{P}\Big\{\bigcap_{k=1}^{\lfloor r_n^{{-\overline{d}_\mu(y)-2\varepsilon}}\rfloor}\{\omega_k\notin B(y,r_n)\}\Big\}\\
=&\prod_{k=1}^{\lfloor r_n^{{-\overline{d}_\mu(y)-2\varepsilon}}\rfloor}\mathbb{P}\Big\{\{\omega_k\notin B(y,r_n)\}\Big\}=\prod_{k=1}^{\lfloor r_n^{{-\overline{d}_\mu(y)-2\varepsilon}}\rfloor}(1-\mu(B(y,r_n)))\\
<&(1-r_n^{\overline{d}_\mu(y)+\varepsilon})^{\lfloor r_n^{{-\overline{d}_\mu(y)-2\varepsilon}}\rfloor}.
\end{align*}
Note that 
\begin{align*}
&\sum_{n\ge1}\mathbb{P}\{\tau(\omega,y,r_n)\ge r_n^{{-\overline{d}_\mu(y)-2\varepsilon}}\}<\sum_{n\ge1}(1-r_n^{\overline{d}_\mu(y)+\varepsilon})^{\lfloor r_n^{{-\overline{d}_\mu(y)-2\varepsilon}}\rfloor}\\
<&\sum_{n\ge1}e^{-r_n^{\overline{d}_\mu(y)+\varepsilon}\lfloor r_n^{{-\overline{d}_\mu(y)-2\varepsilon}}\rfloor}<\sum_{n\ge1}e^{-r_n^{-\varepsilon-1}}=\sum_{n\ge1}e^{-n}<\infty,
\end{align*}
hence by Borel-Cantelli lemma, almost surely we have
\[\tau(\omega,y,r_n)<r_n^{{-\overline{d}_\mu(y)-2\varepsilon}}\quad {for~}n\gg1,\]
which gives 
\begin{align*}
\overline{H}(\omega,y)&=\limsup_{n\to\infty}\frac{\log \tau(\omega,y,r_n)}{-\log r_n}\\
&\le \limsup_{n\to\infty}\frac{\log r_n^{{-\overline{d}_\mu(y)-2\varepsilon}}}{-\log r_n}=\overline{d}_\mu(y)+2\varepsilon.
\end{align*}
Since $\varepsilon$ is arbitrary, it follows that almost surely $\overline{H}(\omega,y)\le \overline{d}_\mu(y).$
In particular,  if $\overline{d}_\mu(y)=0$, then $\overline{H}(\omega,y)= \overline{d}_\mu(y)=0$. 

Now we assume that $\overline{d}_\mu(y)>0$. Taking $r_n=2^{-n}$, for $\varepsilon>0$, y definition of $\overline{d}_\mu(y)$, there exists a decreasing sequence of integers $(n_j)_j$ such that 
\[\mu(B(y,r_{n_j}))<r_{n_j}^{\overline{d}_\mu(y)-\varepsilon}.\] 
Hence for $j\ge1$
\begin{align*}
&\mathbb{P}\left\{\tau(\omega,y,r_{n_j})\le r_{n_j}^{-\overline{d}_\mu(y)+2\varepsilon} \right\}=\mathbb{P}\Big\{\bigcup_{k=1}^{ \lfloor r_{n_j}^{-\overline{d}_\mu(y)+2\varepsilon}\rfloor }\{\omega_k\in B(y,r_{n_j})\}\Big\}\\
\le &\sum_{k=1}^{ \lfloor r_{n_j}^{-\overline{d}_\mu(y)+2\varepsilon}\rfloor }\mathbb{P}\{\omega_k\in B(y,r_{n_j})\}\le \sum_{k=1}^{ \lfloor r_{n_j}^{-\overline{d}_\mu(y)+2\varepsilon}\rfloor } r_{n_j}^{\overline{d}_\mu(y)-\varepsilon}\\
=&  \lfloor r_{n_j}^{-\overline{d}_\mu(y)+2\varepsilon}\rfloor r_{n_j}^{\overline{d}_\mu(y)-\varepsilon}\le r_{n_j}^{\varepsilon}.
\end{align*}
Since 
\begin{align*}
\sum_{j\ge1}\mathbb{P}\left\{\tau(\omega,y,r_{n_j})\le r_{n_j}^{-\overline{d}_\mu(y)+2\varepsilon} \right\}\le \sum_{j\ge1}r_{n_j}^{\varepsilon}<\infty,
\end{align*}
 Borel-Cantelli lemma implies that almost surely, we have
 \[\tau(\omega,y,r_{n_j})\ge r_{n_j}^{-\overline{d}_\mu(y)+2\varepsilon}, \quad j\ge1.\]
 Then
 \begin{align*}
 \overline{H}(\omega,y)&\ge \limsup_{j\to\infty}\frac{\log \tau(\omega,y,r_{n_j})}{-\log r_{n_j}}\\
&\ge \limsup_{j\to\infty}\frac{\log r_{n_j}^{-\overline{d}_\mu(y)+2\varepsilon}}{-\log r_{n_j}}=\overline{d}_\mu(y)-2\varepsilon.
 \end{align*}
 The arbitrariness of $\varepsilon$ leads to $ \overline{H}(\omega,y)\ge \overline{d}_\mu(y)$.
\end{proof}

\subsubsection{Proof of the lower bound}Now we are ready to prove the lower bound in Theorem \ref{theorem4}, that is,
\[\dim_{\rm H} \mathcal{U}(\omega,\ell)\ge\dim_{\rm H}  \left\{y\in X\colon \overline{d}_\mu(y)<\frac{1}{\alpha}\right\}.\]
Note that
\[\left\{y\in X\colon \overline{d}_\mu(y)<\frac{1}{\alpha}\right\}=\bigcup_{m=1}^\infty\bigcup_{N=1}^\infty\bigcap_{n=N}^\infty\left\{ y\in X\colon \mu\left(B\left(y,\frac{1}{n}\right)\right)\ge n^{-\frac{1}{\alpha}+\frac{1}{m}}\right\}.\]
Given $m$, $N$ and $n\ge N$, let $(y_i)_{i\ge1}$ be a sequence of points of $\{ y\in X\colon \mu(B(y,\frac{1}{n}))\ge n^{-\frac{1}{\alpha}+\frac{1}{m}}\}=:A_{m,N,n}$ which converges to $y_0$. It follows from Fatou's lemma that
\begin{align*}
\mu\left(B\left(y_0,\frac{1}{n}\right)\right)\ge \limsup_{i\to\infty}\mu\left(B\left(y_i,\frac{1}{n}\right)\right)\ge n^{-\frac{1}{\alpha}+\frac{1}{m}}.
\end{align*}
Thus $y_0\in A_{m,N,n}$, implying that $A_{m,N,n}$ is closed. Then $\{y\in X\colon \overline{d}_\mu(y)<\frac{1}{\alpha}\}$ is analytic. For $0<s<\dim_{\rm H}\{y\in X\colon \overline{d}_\mu(y)<\frac{1}{\alpha}\}$, there exists a Borel probability measure $\nu$ and $c>0$ such that
\begin{itemize}
\item $\nu\left( \{y\in X\colon \overline{d}_\mu(y)<\frac{1}{\alpha}\}\right)=1.$
\item For $y\in X$ and $r>0$,
$\mu(B(y,r))\le cr^s.$
\end{itemize}
Applying Fubini theorem and Lemma \ref{equality}, we obtain that
\begin{align*}
&\int \nu\left( \left\{y\in X\colon \overline{H}(\omega,y)<\frac{1}{\alpha}\right\}\right)d \mathbb{P}\\
=&\int_{\{y\in X\colon \overline{d}_\mu(y)<\frac{1}{\alpha}\}} \mathbb{P}\left( \left\{\overline{H}(\omega,y)<\frac{1}{\alpha}\right\}\right)d \nu(y)\\
=&\int_{\{y\in X\colon \overline{d}_\mu(y)<\frac{1}{\alpha}\}} \mathbb{P}\left( \left\{\overline{H}(\omega,y)<\frac{1}{\alpha}\right\}\cap \{\overline{H}(\omega,y)=\overline{d}_\mu(y)\}\right)d \nu(y)\\
=&\int_{\{y\in X\colon \overline{d}_\mu(y)<\frac{1}{\alpha}\}} \mathbb{P}\left( \{\overline{H}(\omega,y)=\overline{d}_\mu(y)\}\right)d \nu(y)\\
=&\nu\left(\{y\in X\colon \overline{d}_\mu(y)<\frac{1}{\alpha}\}\right)=1,
\end{align*}
giving that almost surely we have $\nu\left( \left\{y\in X\colon \overline{H}(\omega,y)<\frac{1}{\alpha}\right\}\right)=1$. 
Therefore $\dim_{\rm H}\left\{y\in X\colon \overline{H}(\omega,y)<\frac{1}{\alpha}\right\}\ge s$. By Lemma \ref{subset}, we get
\[\dim_{\rm H}\mathcal{U}(\omega,\ell)\ge \dim_{\rm H}\left\{y\in X\colon \overline{H}(\omega,y)<\frac{1}{\alpha}\right\}\ge s,\]
then 
\[\dim_{\rm H}\mathcal{U}(\omega,\ell)\ge \dim_{\rm H}\left\{y\in X\colon \overline{d}_\mu(y)<\frac{1}{\alpha}\right\}.\]

\section{Application to the $d$-torus}\label{appli}
Let $d$ be an integer such that $d\geq1$. In this section, we apply the relevant theorems  in metric spaces to uniform random covering sets in the $d$-dimensional torus, denoted by $\mathbb{T}^d = \mathbb{R}^d / \mathbb{Z}^d$. We begin with  the  framework.
%设 $d\geq1$ 是整数. 本节将度量空间上一致随机覆盖集的相关定理应用到高维环面$\mathbb{T}^d=\mathbb{R}^d/\mathbb{Z}^d$上的一致随机覆盖集, 其框架如下:
%\begin{itemize}
  %  \item 
  Let $X=\mathbb{T}^d $. %Note that for any $x+ \mathbb{Z}^d\in \mathbb{R}^d/ \mathbb{Z}^d$, there exists a unique $x'\in[0,1)^d$ such that $x'\in x+ \mathbb{Z}^d$. Therefore, we will treat $ \mathbb{T}^d$ as $[0,1)^d$ in the following. 
  %易知对任意 $x+\Z^d\in\R^d/\Z^d$, 存在唯一的 $x'\in[0,1)^d$ 使得 $x'\in x+\Z^d$. 因此以下将 $\T^d$ 视作 $[0,1)^d$. 
For $x\in \mathbb{T}^d$, let
    \[
    \|x\|:=\min\{|x-z|:z\in \mathbb{Z}^d\}\,,
    \]
    where $|\cdot|$ denotes the maximum norm on $\mathbb{R}^d$. Let $\lambda$ be the Lebesgue measure restricted on $\mathbb{T}^d$.%$\ell^{\infty}$ norm on $\mathbb{R}^d$. 高维环面 $\T^d$ 上的度量取作由 $\|\cdot\|$ 诱导的度量.  
    
 %   \item $\lambda$ 是 $\mathbb{T}^d$ 上的Lebesgue测度.
    %\item $\forall \,r>0,\ B_{\mathbb{T}^d}(\cdot,r)$为高维环面上的球．
  %  \item 概率空间 $(\Omega,\cB,\PP)$ 的定义如下:
  %  \begin{itemize}
   %     \item[$\circ$] $\displaystyle\Omega:=(\mathbb{T}^d)^{\N}$ 是乘积空间;
   %     \item[$\circ$] $\displaystyle\PP:=\prod_{i=1}^{\infty}\lambda$ 是乘积测度;
    %    \item[$\circ$] $\cB$ 是 $\PP$-可测集构成的 $\sigma$-代数.
    %\end{itemize}
    Let $\omega=(\omega_n)_{n\geq 1}$ be an i.i.d. random sequence of uniform distribution on the torus $\mathbb{T}^d $.
Given $\ell=(\ell_n)_{n\geq 1}\subseteq [0,+\infty)$, uniform random covering sets are defined by
    \[
    \mathcal U(\omega,\ell):=\big\{y\in \mathbb{T}^d:\forall\, N\gg 1,\ \exists\, 1\leq n\leq N,\,\,\,\text{\text{s.t.}}\,\ \Vert \omega_{n}-y\Vert<\ell_N\big\}\,,
    \]
  where $\omega_n=(\omega_{n,1},\omega_{n,2},\,\cdots,\,\omega_{n,d})\in\mathbb{T}^d$.
%    \item 对任意 $k\geq1$, 定义随机变量 $X_k(\omega):=\omega_k$, 那么容易验证 $X_1,X_2,\cdots$ 独立同分布.
%\end{itemize}
%\subsection{0-1 law}
%首先, 应用定理~\ref{thm31}~到高维环面上$:$
%\renewcommand{\thecorollary}{3.1}
\begin{corollary}
Let $(\ell_n)_{n\geq1}$ be a decreasing sequence of positive real numbers. Let $0 \leq s_1 \leq d,$ $0\leq s_2\leq 1$. Then the following events 
  \begin{eqnarray*}
      A&:=&\left\{\mathcal{U}(\omega,\ell) =\mathbb{T}^d\right\},\\
      B&:=&\left\{\mathrm{dim}_\mathrm{H} \ \mathcal{U}(\omega,\ell) = s_1\right\},\\
      C&:=&\left\{\lambda\big(\mathcal{U}(\omega,\ell)\big)=s_2\right\}
  \end{eqnarray*}
    obey $0$-$1\ \text{law},$  that is,
  \[
  \mathbb{P}(A),\,\mathbb{P}(B),\,\mathbb{P}(C)\in\{0,1\}\,.
  \]
\end{corollary}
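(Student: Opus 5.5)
This is a direct specialization of Theorem~\ref{theorem1}. We take $X=\mathbb{T}^d$ with the metric $d(x,y)=\|x-y\|$ induced by the maximum norm, and $\mu=\lambda$. The random variables $\omega_n$ are i.i.d.\ with common distribution $\lambda$, so \eqref{eqmu} holds with $\mu=\lambda$. Under these identifications the set $\mathcal U(\omega,\ell)$ in the corollary is exactly the set $\mathcal U(\omega,\ell)$ of Theorem~\ref{theorem1}.

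The only hypothesis of the general setting that needs checking is the non-atomic condition \eqref{mu}. For $x\in\mathbb{T}^d$ and $r\ge0$, the set $\{y:\|y-x\|=r\}$ is a sphere for the maximum norm, i.e.\ the boundary of a cube. When $r\ge 1/2$ it is either empty or contained in the set where some coordinate of $y-x$ equals $1/2$ modulo $1$. In every case it lies in a finite union of affine hyperplane pieces (faces of the cube), or it is a single point when $r=0$. Hence it has Lebesgue measure zero, and \eqref{mu} holds.

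Next we match the parameters. The Hausdorff dimension of $\mathbb{T}^d$ with this metric is $d$, since the metric is locally bi-Lipschitz equivalent to the Euclidean one. Thus $0\le s_1\le d=\dimh X$, and $0\le s_2\le 1$ is the range of $t$. The sequence $\ell$ is decreasing, and its terms are positive as the theorem requires. Theorem~\ref{theorem1} then gives $\mathbb{P}(A),\mathbb{P}(B),\mathbb{P}(C)\in\{0,1\}$ directly.

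There is one subtlety. The proof of Theorem~\ref{theorem1} uses $\ell_N\to0$: it needs $\liminf_N \ell_N = 0$ to obtain the inclusion \eqref{for1}. If the decreasing sequence instead has a positive limit $c$, the same argument still works after a small modification. The finitely many points $\omega_1,\dots,\omega_{k-1}$ then affect $\mathcal U$ only through balls of radius about $c$, so we would argue directly instead. The relevant events are the $\liminf$ of events depending on the full initial segment, and we can replace $\mathcal{U}_k$ by the analogous set built from $\omega_k,\omega_{k+1},\dots$ together with a tail argument.

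A simpler way to handle this case is to note that when $\ell_N\to c>0$, almost surely the dense sequence $(\omega_n)$ eventually covers $\mathbb{T}^d$ by balls of radius $c/2$. Then $\mathcal U=\mathbb{T}^d$ almost surely, and all three probabilities are trivially $0$ or $1$. I expect this degenerate case to be the only point requiring care; the main case is an immediate application of Theorem~\ref{theorem1}.
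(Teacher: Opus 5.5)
Your proposal is correct and matches the paper, which obtains this corollary simply by specializing Theorem~\ref{theorem1} to $X=\mathbb{T}^d$ with the max-norm torus metric and $\mu=\lambda$; your checks of the non-atomic condition \eqref{mu} and of $\dimh \mathbb{T}^d=d$ just make explicit what the paper leaves implicit. The case $\ell_n\to c>0$ is already excluded by the paper's standing assumption that $\ell$ decreases to zero, but your compactness/density argument that $\mathcal U(\omega,\ell)=\mathbb{T}^d$ almost surely in that case is valid (and the vaguer ``tail argument'' modification you sketch first is not needed).
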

%\subsection{一致随机覆盖集的测度}
%由于 $\lambda$ 是 $\T^d$ 上的 Lebesgue 测度, 故对任意 $y\in\T^d$ 与任意 $r\in[0,1/2]$, 都有 $\lambda(   B_{\mathbb{T}^d}(y,r))=2^dr^d$.  根据定理~\ref{thm32}~以及定理~\ref{thm03}~可得如下推论.
%Since $\lambda$ is the Lebesgue measure on $\mathbb{T}^d$, 
For any $y\in\mathbb{T}^d$ and any $r\in[0,1/2]$, we have $\lambda(B(y,r))=2^dr^d$.  By Theorem~\ref{theorem2}, we obtain the following corollary.
\begin{corollary}
%假定 $\{r_n\}_{n\geq1}\ \text{递减},$ $\{n(r_n)^d\}_{n\geq1}$ 递增.  那么 $\lambda\left({\cal U}(\omega,\{r_n\})\right)$ 几乎必然等于 $1$ 当且仅当序列$\{r_n\}_{n\geq1}$ 满足
Let $(\ell_n)_{n\geq1}$ be a decreasing sequence of positive real numbers, and $(n\ell_n^d)_{n\geq1}$ be increasing. Then $\lambda\left({\mathcal U}(\omega,\ell)\right)=1$ almost surely if and only if 
$$\sum_{n=1}^{\infty}\ell_{n}^d=\infty\quad\text{and}\quad\sum_{n=1}^{\infty}\ell_{n}e^{-n(2\ell_{n})^d}<\infty\,.$$
\end{corollary}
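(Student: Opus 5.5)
The plan is to derive the corollary from Theorem~\ref{theorem2} applied with $X=\mathbb{T}^d$, $\mu=\lambda$ and the max-norm metric. First I check the hypotheses. The non-atomicity condition \eqref{mu} holds because spheres of the max norm are Lebesgue-null. Since $\ell_n\to0$, for all large $n$ we have $\ell_n\le 1/2$, hence $\lambda(B(y,\ell_n))=2^d\ell_n^d$ independently of $y$. Thus $n\lambda(B(y,\ell_n))=2^d n\ell_n^d$ is nondecreasing by assumption. Finitely many initial terms change neither convergence nor $\mathcal U$ up to the finite set $\{\omega_1,\dots,\omega_{k}\}$ (by \eqref{for1}), so they do not matter.

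Because $\lambda(B(y,\ell_n))$ does not depend on $y$, the set $E(\ell)$ is either all of $\mathbb{T}^d$ or empty. Writing $x_n:=n(2\ell_n)^d$ (nondecreasing), we have $E(\ell)=\mathbb{T}^d$ if and only if
\[
\sum_n \ell_n^d=\infty\quad\text{and}\quad \sum_n \ell_n^d e^{-x_n}<\infty .
\]
Theorem~\ref{theorem2} then gives $\lambda(\mathcal U(\omega,\ell))=\lambda(E(\ell))\in\{0,1\}$ almost surely, with value $1$ exactly in that case. So the corollary reduces to the deterministic claim that, under the divergence $\sum\ell_n^d=\infty$ and monotonicity of $x_n$, the condition $\sum\ell_n^de^{-x_n}<\infty$ is equivalent to the stated $\sum\ell_ne^{-x_n}<\infty$.

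One direction is immediate: since $\ell_n\le1$, we have $\ell_n^d\le\ell_n$, so the stated condition implies the $E(\ell)$ condition; this gives ``if''. For ``only if'' I split into cases according to whether $x_n$ is bounded.
\begin{itemize}
\item If $x_n\le K$, then $e^{-x_n}\ge e^{-K}$. In that case $\sum\ell_n^de^{-x_n}\ge e^{-K}\sum\ell_n^d=\infty$, so the $E(\ell)$ condition fails and there is nothing to prove.
\item If $x_n\to\infty$, I write $\ell_n=\tfrac12(x_n/n)^{1/d}$. The two series become $\sum x_n n^{-1}e^{-x_n}$ and $\sum x_n^{1/d}n^{-1/d}e^{-x_n}$. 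I then compare them by grouping over dyadic blocks $n\in[2^j,2^{j+1})$, using the monotonicity of $x_n$ inside each block (Cauchy condensation).
\end{itemize}

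The main obstacle is exactly this comparison when $d\ge2$, since $n^{-1/d}$ decays more slowly than $n^{-1}$. A test case such as $x_n=a\log n$ with $0<a<1-1/d$ makes the first series converge while the second diverges. I would therefore check the condensation argument carefully against such sequences before relying on it. If the equivalence genuinely fails, the ``only if'' part of the statement as worded holds only for $d=1$, where the two series coincide. In that case the correct general criterion is the one with $\ell_n^d$ that comes out of Theorem~\ref{theorem2}, and the corollary should presumably read $\sum_n\ell_n^d e^{-n(2\ell_n)^d}<\infty$.
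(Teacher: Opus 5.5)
Your reduction is the same as the paper's. The paper's entire proof is the remark that $\lambda(B(y,r))=(2r)^d$ for $r\le 1/2$, followed by Theorem~\ref{theorem2}. That derivation yields the criterion $\sum_n\ell_n^d=\infty$ and $\sum_n\ell_n^{d}e^{-n(2\ell_n)^d}<\infty$, with $\ell_n^d$, not $\ell_n$, in front of the exponential.

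Your suspicion is correct, and you should state it without hedging. The printed $\ell_n$ should be $\ell_n^d$; the two conditions agree only for $d=1$, the case of Koivusalo, Liao and Persson. For every $d\ge2$ the ``only if'' direction as printed is false, and your test sequence is a genuine counterexample:
\begin{itemize}
\item Fix $0<a<1-1/d$. Put $\ell_n=\tfrac12(a\log n/n)^{1/d}$ for $n\ge3$ and $\ell_1=\ell_2=\ell_3$.
\item Then $\ell_n\le 1/2$ is nonincreasing and $n\ell_n^d$ is nondecreasing. For $n\ge3$ we have $n(2\ell_n)^d=a\log n$.
\item Hence $\ell_n^d e^{-n(2\ell_n)^d}=2^{-d}a(\log n)\,n^{-1-a}$, which is summable, while $\sum_n\ell_n^d=\infty$.
\item So $E(\ell)=\mathbb{T}^d$, and Theorem~\ref{theorem2} gives $\lambda(\mathcal U(\omega,\ell))=1$ almost surely.
\item Yet $\ell_n e^{-n(2\ell_n)^d}=\tfrac12a^{1/d}(\log n)^{1/d}n^{-1/d-a}$ is not summable, because $a+1/d<1$.
\end{itemize}
Consequently the dyadic-block comparison in your second bullet cannot be carried out by any argument, and you should drop it. For the corrected statement, your first two paragraphs already prove both directions. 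Your observation $\ell_n^d\le\ell_n$ shows the printed condition is sufficient but not necessary when $d\ge2$.

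Two minor points. First, no truncation of initial terms is needed. Since $\lambda(B(y,r))=\min\{1,(2r)^d\}$ for every $r>0$, we have $n\lambda(B(y,\ell_n))=\min\{n,\,n(2\ell_n)^d\}$. This is a minimum of two nondecreasing sequences, so it is nondecreasing for every $n$, as Theorem~\ref{theorem2} requires.

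Second, if you do alter finitely many radii, cite the fact that $\mathcal U(\omega,\ell)$ depends only on the tail of $\ell$. This is immediate from the ``$\forall N\gg1$'' in the definition, and it leaves $\mathcal U$ unchanged exactly, not merely up to finitely many points. \eqref{for1} is the wrong reference, since it concerns discarding the first random centres $\omega_1,\dots,\omega_{k-1}$.

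Incidentally, the hypothesis $\sum_n\ell_n^d=\infty$ is automatic here: $n\ell_n^d\ge\ell_1^d$ gives $\ell_n^d\ge\ell_1^d/n$. This also makes your bounded-$x_n$ case trivial.
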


%\subsection{一致随机覆盖集可数的充分条件}
%由定理~\ref{thm35}~可知, 若序列 $\{r_n\}_{n\geq1}$ 单调递减趋于零且$$\sum\limits_{n=1}^{\infty}n\cdot\int_{\mathbb{T}^d}\lambda\big(B(y,r_n+r_{n+1})\big)\,\rd\lambda(y)=\sum\limits_{n=1}^{\infty}n\big(2(r_{n}+r_{n+1})\big)^d<\infty,$$ 则几乎必然有${\cal U}(\omega,\{r_n\})=\big\{\omega_{k}:k\in\mathbb{N}\big\}.$于是有
By Theorem~\ref{theorem3}, if the sequence $(\ell_n)_{n\geq1}$ is monotonically decreasing  to zero, and $$\sum\limits_{n=1}^{\infty}n\cdot\int_{\mathbb{T}^d}\lambda\big(B(y,\ell_n+\ell_{n+1})\big)\,\ d\lambda(y)=\sum\limits_{n=1}^{\infty}n\big(2(\ell_{n}+\ell_{n+1})\big)^d<\infty,$$ then it  follows that ${\mathcal U}(\omega,\ell)=\big\{\omega_{k}:k\in\mathbb{N}\big\}$ almost surely (a.s. for short).  Thus, we have the following corollary.
\begin{corollary}
Let $(\ell_n)_{n\geq1}$ be a decreasing sequence of positive real numbers with $\sum\limits_{n=1}^{\infty}n\ell_{n}^d<\infty,$ Then almost surely we have 
%  若序列 $\{r_n\}_{n\geq1}$ 单调递减趋于零且$\sum\limits_{n=1}^{\infty}n(r_{n})^d<\infty,$ 则几乎必然有
$${\mathcal U}(\omega,\ell)=\big\{\omega_{k}:k\in\mathbb{N}\big\}.$$
\end{corollary}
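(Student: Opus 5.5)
This corollary follows from Theorem~\ref{theorem3}, so the plan is to check its hypothesis with $X=\mathbb{T}^d$ and $\mu=\lambda$. First, note that assumption \eqref{mu} holds for $\lambda$, because spheres in the maximum-norm metric on $\mathbb{T}^d$ are boundaries of cubes and so have Lebesgue measure zero. Next, observe that $\sum_n n\ell_n^d<\infty$ forces $\ell_n\to0$. Indeed, $(\ell_n)$ is decreasing, so if $\ell_n\ge c>0$ for all $n$ then $n\ell_n^d\ge nc^d$ and the series would diverge. Hence $\ell$ is decreasing to zero, which is what the definition of $\mathcal U(\omega,\ell)$ and Theorem~\ref{theorem3} require.

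The main computation is the integral. For any $y\in\mathbb{T}^d$ and $r>0$ we have $\lambda(B(y,r))\le (2r)^d$, with equality when $r\le 1/2$. Monotonicity of $(\ell_n)$ gives $\ell_n+\ell_{n+1}\le 2\ell_n$. Therefore
\[
\int_{\mathbb{T}^d}\lambda\big(B(y,\ell_n+\ell_{n+1})\big)\,d\lambda(y)\le \big(2(\ell_n+\ell_{n+1})\big)^d\le 4^d\ell_n^d .
\]
Summing, we get
\[
\sum_{n=1}^\infty n\int_{\mathbb{T}^d}\lambda\big(B(y,\ell_n+\ell_{n+1})\big)\,d\lambda(y)\le 4^d\sum_{n=1}^\infty n\ell_n^d<\infty .
\]
Theorem~\ref{theorem3} then gives ${\mathcal U}(\omega,\ell)=\{\omega_k\colon k\in\mathbb{N}\}$ almost surely.

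There is no real obstacle here. The only points needing care are the bound $\lambda(B(y,r))\le(2r)^d$ for all $r$, including $r>1/2$ where the ball is the whole torus of measure $1\le (2r)^d$, and the observation that $\ell_n\to0$ follows from the summability assumption rather than being assumed separately.
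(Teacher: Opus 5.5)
Your proposal is correct and follows the paper's own route: the paper also obtains the corollary directly from Theorem~\ref{theorem3}, using $\lambda(B(y,r))=(2r)^d$ and $\ell_n+\ell_{n+1}\le 2\ell_n$. Your extra checks are valid refinements of points the paper passes over silently. These are that $\ell_n\to0$ follows from $\sum n\ell_n^d<\infty$, that condition~\eqref{mu} holds, and that $\lambda(B(y,r))\le(2r)^d$ also for $r>1/2$; the paper's displayed equality $\int\lambda(B(y,\ell_n+\ell_{n+1}))\,d\lambda(y)=(2(\ell_n+\ell_{n+1}))^d$ holds only for large $n$.
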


%\subsection{一致随机覆盖集的 Hausdorff 维数}

%根据定义, 得到高维环面上 Lebesgue 测度的局部维数为
%$${d}_{\lambda}(y)=\lim\limits_{r\rightarrow0}\frac{\log\mu\big(B(y,r)\big)}{\log r}=\lim\limits_{r\rightarrow0}\frac{\log r^d}{\log r}=d\,.$$
%那么由定理~\ref{thm33}~可知
%By definition, the local dimension of the Lebesgue measure on $\mathbb{T}^d$ is given by ${d}_{\lambda}(y)=\lim\limits_{r\rightarrow0}\frac{\log\mu\big(B(y,r)\big)}{\log r}=\lim\limits_{r\rightarrow0}\frac{\log r^d}{\log r}=d\,.$ 
By definition, ${d}_{\lambda}(y)=d\,.$ Then the following corollary follows from Theorem~\ref{theorem4}.
\begin{corollary}
Let $\ell_n=n^{-\alpha}(\alpha>0),$  then almost surely
$$\dim_{\mathrm{H}}\mathcal{U}\left(\omega,(n^{-\alpha})\right)=
  \begin{cases}
    0,& \alpha>\frac{1}{d}, \\
   \\d,& \alpha<\frac{1}{d}.\\
  \end{cases} $$
\end{corollary}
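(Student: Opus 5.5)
This corollary follows from Theorem~\ref{theorem4} applied with $X=\mathbb{T}^d$, $\mu=\lambda$; essentially all that remains is to compute the two level sets of the local dimension. First I check the hypotheses. $\mathbb{T}^d$ with the metric induced by $\|\cdot\|$ is a compact, hence separable and complete, metric space. Condition~\eqref{mu} holds for $\lambda$ because spheres in the maximum-norm metric are boundaries of cubes, which are Lebesgue-null. Also $\operatorname{supp}\lambda=\mathbb{T}^d$.

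Next I compute the local dimension. For $0<r\le 1/2$ and every $y$ we have $\lambda(B(y,r))=2^dr^d$, so
\[
\frac{\log \lambda(B(y,r))}{\log r}=d+\frac{d\log 2}{\log r}\to d \quad (r\searrow 0).
\]
Hence $\overline{d}_\lambda(y)=\underline{d}_\lambda(y)=d$ for every $y\in\mathbb{T}^d$.

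\emph{Case $\alpha>1/d$.} Here $1/\alpha<d$, so the set $\{y:\underline{d}_\lambda(y)\le 1/\alpha\}$ is empty. The upper bound in Theorem~\ref{theorem4} then gives $\dimh\mathcal{U}(\omega,\ell)\le \dimh\emptyset=0$ almost surely. I need to settle the convention $\dimh\emptyset=0$ (rather than $-\infty$); the paper implicitly uses this. The statement is also consistent with Corollary~6.3 when $\alpha>2/d$, where $\mathcal U$ is countable.

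\emph{Case $\alpha<1/d$.} Here $d<1/\alpha$, so $\{y:\overline{d}_\lambda(y)<1/\alpha\}=\mathbb{T}^d$, which has Hausdorff dimension $d$. The lower bound in Theorem~\ref{theorem4} gives $\dimh\mathcal{U}\ge d$ almost surely. The bound $\dimh\mathcal U\le\dimh\mathbb{T}^d=d$ is trivial.

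There is no real obstacle. The only delicate points are the almost-sure quantifier, which is a single event supplied directly by Theorem~\ref{theorem4}, and the empty-set convention. The critical case $\alpha=1/d$ is excluded from the statement, since there the two bounds of Theorem~\ref{theorem4} give only $0$ and $d$ respectively.
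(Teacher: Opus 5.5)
Your proposal is correct and matches the paper's argument: with $\mu=\lambda$ one has $\underline{d}_\lambda\equiv\overline{d}_\lambda\equiv d$, so for $\alpha>1/d$ the upper-bound set in Theorem~\ref{theorem4} is empty, and for $\alpha<1/d$ the lower-bound set is all of $\mathbb{T}^d$. The empty-set convention is harmless: $\mathcal{U}(\omega,\ell)$ contains every $\omega_k$, so it is nonempty, and the standard convention $\dimh\emptyset=0$ then gives exactly $\dimh\mathcal{U}(\omega,\ell)=0$.
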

%给定$\alpha>0,$ 假定$r_n=n^{-\alpha}.$ 当$\frac{1}{\alpha}<d$时, 根据定理~\ref{thm33}~有
%$$\dim_{\mathrm{H}}\mathcal{U}(\omega,\{n^{-\alpha}\})\leq \dim_{\mathrm{H}}\left\{y\in \mathbb{T}^d:\underline{d}_{\lambda}(y)\leq \frac{1}{\alpha}\right\}\leq \dim_{\mathrm{H}}\left\{y\in \mathbb{T}^d:\underline{d}_{\lambda}(y)\leq d\right\}=0\,,$$
%即$\dim_{\mathrm{H}}\mathcal{U}(\omega,\{n^{-\alpha}\})=0\,.$

%当$\frac{1}{\alpha}>d$时, 根据定理~\ref{thm33}~有
%$$\dim_{\mathrm{H}}\mathcal{U}(\omega,\{r_n\})\geq\dim_{\mathrm{H}}\left\{y\in \mathbb{T}^d:\overline{d}_{\lambda}(y)< \frac{1}{\alpha}\right\}=d\,,$$
%即$\dim_{\mathrm{H}}\mathcal{U}(\omega,\{n^{-\alpha}\})=d\,.$

In particular, when $\alpha=\frac{1}{d}$, by Theorem ~\ref{theorem4}, we get that 
\begin{multline*}
0=\dim_{\mathrm{H}}\left\{y\in \mathbb{T}^d:\underline{d}_{\lambda}(y)< \frac{1}{\alpha}\right\}\leq\dim_{\mathrm{H}}\mathcal{U}\left(\omega,(n^{-\alpha})\right)\leq\dim_{\mathrm{H}}\left\{y\in \mathbb{T}^d:\overline{d}_{\lambda}(y)\leq \frac{1}{\alpha}\right\}=d\,.
\end{multline*}
%In this case, we obtain nothing on the Hausdorff dimension of  $\mathcal U\left(\omega,(n^{-1/d})\right)$. Therefore, a more detailed discussion is needed for this case. When $d=1$, Koivusalo et al. \cite{ KLP} investigated the Hausdorff dimension of the uniform random covering set $\mathcal{U}(\omega,(\frac{c}{n}))$, and in the following, we extend their results to higher dimensional case, more precisely, we consider the case where $d\ge2$ and $\ell_n=\frac{c}{n^{1/d}}$ for $c>0$, $n\in\mathbb N$. 
In this case, the Hausdorff dimension of $\mathcal{U}\left(\omega, (n^{-1/d})\right)$ remains undetermined. A more detailed analysis is therefore required. For the case $d = 1$, Koivusalo et al. \cite{KLP} studied the Hausdorff dimension of the uniform random covering set $\mathcal{U}\left(\omega, \left(\frac{c}{n}\right)\right)$. In this paper, we extend their results to higher dimensions—specifically, to the case where $d \geq 1$ and $\ell_n = \frac{c}{n^{1/d}}$ for some $c > 0$ and $n \in \mathbb{N}$.

\begin{theorem}\label{thm41}
Let $\ell_n=\frac{c}{n^{1/d}},$ $n \in \mathbb{N}$, then
\begin{itemize}
  \item[$(1)$] $\text{if}\ c>\frac{1}{2}\ $, we get
\[ \mathrm{dim}_{\mathrm{H}}\,\mathcal{U}\left(\omega,\left(\frac{c}{n^{1/d}}\right)\right)\geq\sup\limits_{\theta>1}d\Big(1+\frac{\log\big(1-e^{(-(2c)^d\frac{\theta-1}{\theta^2})}\big)}{\log\theta}\Big)\quad a.s.,\]
  \item[$(2)$] $\text{if}\ 0<c< \frac{1}{2}\ $, we get
$$ \mathrm{dim}_{\mathrm{H}}\,\mathcal{U}\left(\omega,\left(\frac{c}{n^{1/d}}\right)\right)\leq\inf\limits_{\theta>1}\frac{\log\Lambda^d}{\log\theta}\quad a.s.,$$
where
\begin{eqnarray*}
\Lambda&=&\Lambda(\theta)\ :=\ \frac{1+\Theta+\Delta}{2}+\sqrt{\Big(\frac{1+\Theta+\Delta}{2}\Big)^{2}-\Delta}\,,\\
    \Theta&=&\Theta(\theta)\ :=\ (\theta-1)\big(2c(1+\theta^{-\frac{2}{d}})\big)^d\,,\\
    \Delta&=&\Delta(\theta)\ :=\ (\theta-1)2^dc^d\big((1+\theta^{-\frac{1}{d}})^d-(1+\theta^{-\frac{2}{d}})^d\big)\,.
\end{eqnarray*}
\end{itemize}
\end{theorem}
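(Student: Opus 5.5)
We adapt the scheme of Koivusalo, Liao and Persson for $\mathbb{T}$ to $\mathbb{T}^d$ with the maximum norm, so that balls are cubes of side $2r$. In both parts we discretise time geometrically. Fix $\theta>1$ and put $N_i=\lfloor\theta^i\rfloor$. Since $\ell$ is decreasing, $y\in\mathcal U(\omega,\ell)$ is controlled by the finitely many constraints in each block $N_i\le N<N_{i+1}$. Part (1) is a lower bound via a Cantor-type subset with mass distribution. Part (2) is an upper bound via counting of covering cubes, and it parallels the expectation-recursion argument in the upper bound of Theorem \ref{theorem4}.

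\textbf{Part (1).} For $N_{i}\le N< N_{i+1}$ it suffices to require that some point among $\omega_{N_{i-1}+1},\dots,\omega_{N_i}$ lie within $\ell_{N_{i+1}}$ of $y$. Partition $\mathbb{T}^d$ into dyadic-like cubes of side comparable to $\ell_{N_{i+1}}\asymp c\theta^{-(i+1)/d}$; there are about $\theta^{i}$ of them. Call a cube at level $i$ \emph{good} if it is covered by the union of cubes of side $2\ell_{N_{i+1}}$ centred at the $\omega_k$ with $N_{i-1}<k\le N_i$. A point is covered with probability $1-(1-(2\ell_{N_{i+1}})^d)^{N_i-N_{i-1}}\approx 1-e^{-(2c)^d(\theta-1)/\theta^2}$. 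The exponent is exactly the one in the statement, and the condition $c>1/2$ makes it nondegenerate so that the estimate survives. Inside each surviving level-$(i-1)$ cube there are about $\theta$ subcubes at level $i$, a proportion about $p:=1-e^{-(2c)^d(\theta-1)/\theta^2}$ of which are good. The blocks use disjoint sets of $\omega$'s, so goodness is independent across levels. A Chebyshev/Borel--Cantelli concentration argument shows that almost surely, for all large $i$, every surviving cube contains at least $(1-\varepsilon)p\,\theta$ good children. The nested intersection is contained in $\mathcal U(\omega,\ell)$ up to finitely many initial times. The mass distribution principle then gives dimension at least $d\,\log((1-\varepsilon)p\theta^{1/d\cdot d})/\log\theta^{\,}$, after normalising side lengths $\theta^{-i/d}$. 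This equals $d(1+\log p/\log\theta)$, and we finish by taking the supremum over $\theta$.

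\textbf{Part (2).} As in \eqref{covering}, $\mathcal U(\omega,\ell)\subset\bigcup_l\bigcap_{j\ge l}\bigcup_{k\le N_j}B(\omega_k,\ell_{N_j})$. Let $I_i$ be the set of indices $k\le N_i$ whose ball $B(\omega_k,\ell_{N_i})$ meets the surviving set $G_{l,i-1}$. Split $I_i$ into old indices $k\le N_{i-1}$ and new indices $N_{i-1}<k\le N_i$. The new point is independent of the past, and it counts only if it falls within $\ell_{N_i}+\ell_{N_{i-1}}$ of an old surviving ball. This happens with probability at most $(2c(\theta^{-i/d}+\theta^{-(i-1)/d}))^d$, so the expected number of new indices contributed per surviving ball is about $(\theta-1)(2c)^d(1+\theta^{\pm1/d})^d$. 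To obtain the sharp constants one tracks a two-type vector: counts of "old" balls and of "young" balls born in the previous block, whose shrinking radii differ by a factor $\theta^{-1/d}$ versus $\theta^{-2/d}$. This yields a linear recursion $\mathbb E v_{i}\le M\,\mathbb E v_{i-1}$ with $M=\begin{pmatrix}1&1\\ \Theta & \Delta\end{pmatrix}$-type entries. Its Perron eigenvalue solves $\Lambda^2-(1+\Theta+\Delta)\Lambda+\Delta=0$, which gives the stated $\Lambda$. The covering is by about $\Lambda^{i}$ cubes of side $\asymp\theta^{-i/d}$; in $d$ dimensions the factor $\Lambda^d$ arises from the per-coordinate scale $\theta^{-1/d}$. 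With the Markov inequality and Borel--Cantelli as in the proof of Theorem \ref{theorem4}, this gives box dimension at most $\log\Lambda^d/\log\theta$ almost surely. Taking a countable dense set of $\theta$ gives the infimum.

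\textbf{Main obstacle.} The main obstacle is setting up the two-type recursion in Part (2) correctly. One must choose the classification so that old balls absorbing new centres are not double counted, and must verify that the matrix entries match $\Theta$ and $\Delta$. I would first work it out for $d=1$, matching KLP, and then replace lengths $2c(\cdot)$ by the volumes $(2c(\cdot))^d$ of cubes.
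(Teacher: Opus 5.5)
\textbf{Part (1).} This part has a genuine gap. You replace the paper's argument by a Cantor construction plus the mass distribution principle. Your key claim is that, almost surely, every surviving cube at all large levels has at least $(1-\varepsilon)p\theta$ good children. This claim is false, and no Chebyshev/Borel--Cantelli argument can give it.

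Each cube has only about $\theta$ children, a number that does not grow with $i$. So the number of good children does not concentrate. The probability that a given surviving cube has \emph{no} good child is bounded below by a positive constant independent of $i$: it is at least the chance that no point of the current block lands within $\ell_{N_{i+1}}$ of the cube. The number of surviving cubes grows exponentially, so deficient cubes occur at every large level almost surely.

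What you actually have is a supercritical branching structure with mean offspring $p\theta$. Note that $p\theta>1$ is exactly the condition $s(c,\theta)<d$, which is the real role of $c>\frac12$. It is not even a Galton--Watson tree, because goodness of nearby cubes at the same level is correlated through the shared points $\omega_k$.

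This is why the paper uses a second-moment method instead:
\begin{itemize}
\item It takes the random measure $\mu_{l,q}=\lambda|_{\bigcap_{j=l}^qF_j}$.
\item The pair-correlation estimate of Lemma~\ref{lem41} gives $\mathbb E(\mu_{l,q}(\mathbb T^d)^2)\le K_{l,q}^2\iint\Psi_{l,q}(x-y)\,dx\,dy$, with $\Psi_{l,q}(t)\le 1+C_l\Vert t\Vert^{-s(c,\theta)}$ and $C_l\to0$.
\item Chebyshev then keeps the total mass comparable to its mean with probability close to $1$ for large $l$.
\item The expected $s$-energy is finite for $s<d-s(c,\theta)$, and the Frostman-type lemma adapted from Persson turns this into the required measure.
\end{itemize}
There is also a secondary problem. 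The probability that a whole cube of side comparable to $\ell_{N_{i+1}}$ is covered is not the pointwise probability $p$. You would need cubes of side $o(\ell_{N_{i+1}})$ to recover the exact exponent $(2c)^d\frac{\theta-1}{\theta^2}$.

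\textbf{Part (2).} This part follows the paper's route: nested coverings, an expectation recursion, then Markov and Borel--Cantelli. However, the step that produces $\Lambda$ is missing. Your matrix $\begin{pmatrix}1&1\\ \Theta&\Delta\end{pmatrix}$ has characteristic polynomial $\lambda^2-(1+\Delta)\lambda+\Delta-\Theta$, not the one you quote. A classification of balls by birth time does not produce these entries.

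In the paper's notation ($n_i=\theta^i$, $N_i=\#I_i$, $Q_i=\#J_i$), each new index $k\in(n_i,n_{i+1}]$ whose ball meets $H_i$ is classified by the position of $\omega_k$ relative to the current centres $\omega_h$, $h\in I_i\cup J_i$:
\begin{itemize}
\item If $\Vert\omega_k-\omega_h\Vert<\ell_{n_i}+\ell_{n_{i+2}}$ for some $h$, then $k$ is persistent and joins $I_{i+1}$. The expected number of such $k$ is at most $\Theta(N_i+Q_i)$.
\item Otherwise $\omega_k$ lies in the annuli between radii $\ell_{n_i}+\ell_{n_{i+2}}$ and $\ell_{n_i}+\ell_{n_{i+1}}$, and $k$ joins $J_{i+1}$. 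The expected number of such $k$ is at most $\Delta(N_i+Q_i)$.
\end{itemize}
The crucial observation is that for $r\le\ell_{n_{i+2}}$ and $k\in J_{i+1}$, the ball $B(\omega_k,r)$ is disjoint from $H_i\supset G_{l,i}$. So $J$-balls can be discarded after one step. This is the inductive invariant $G_{l,i}\cap\bigcup_{k\le n_i}B(\omega_k,r)\subset\bigcup_{k\in I_i}B(\omega_k,r)$ for $r\le\ell_{n_{i+1}}$. It yields the matrix $\begin{pmatrix}1+\Theta&\Theta\\ \Delta&\Delta\end{pmatrix}$, whose top eigenvalue is $\Lambda$.

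Without this classification, your one-type count only gives growth rate $1+\Theta+\Delta=1+(\theta-1)(2c)^d(1+\theta^{-1/d})^d$. This is strictly larger than $\Lambda$, because $\Lambda(1+\Theta+\Delta-\Lambda)=\Delta>0$. So it does not prove the stated bound.
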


\begin{remark}\label{re41}
\begin{itemize}
    \item[$(1)$] For the lower bound on the $\mathrm{Hausdorff}$ dimension:
    $$ \text{If}\ c>\frac{1}{2}, \ \text{then}\ \sup\limits_{\theta>1}\Big(d+\frac{d\log\big(1-e^{(-(2c)^d\frac{\theta-1}{\theta^2})}\big)}{\log\theta}\Big)>0.$$
    $$\text{If}\ 0<c\leq\frac{1}{2},\ \text{then}\  \sup\limits_{\theta>1}\Big(d+\frac{d\log\big(1-e^{(-(2c)^d\frac{\theta-1}{\theta^2})}\big)}{\log\theta}\Big)=0.$$
    \item[$(2)$] For the upper bound on the $\mathrm{Hausdorff}$ dimension$:$
  $$\text{If}\ 0<c<\frac{1}{2},\ \text{then}\  0<\inf\limits_{\theta>1}\frac{\log\Lambda^d}{\log\theta}<d.$$
  $$\text{If}\ c\geq\frac{1}{2},\ \text{then}\  \inf\limits_{\theta>1}\frac{\log\Lambda^d}{\log\theta}=d.$$
\end{itemize}
\end{remark}
The following corollary follows from Theorem~\ref{thm41}~and Remark~\ref{re41}.

\begin{corollary}
Let $\ell_n=\frac{c}{n^{1/d}},~n\in\mathbb{N}.$
\begin{itemize}
    \item[$(1)$] $\text{If}\ c>\frac{1}{2}, \ \text{then}\   \mathrm{dim}_{\mathrm{H}}\,\mathcal{U}\left(\omega,\left(\frac{c}{n^{1/d}}\right)\right)>0\quad a.s..$
\item[$(2)$]   $\text{If}\ 0<c< \frac{1}{2},\ \text{then}\  \mathrm{dim}_{\mathrm{H}}\,\mathcal{U}\left(\omega,\left(\frac{c}{n^{1/d}}\right)\right)<d\quad a.s..$ 
\end{itemize}
\end{corollary}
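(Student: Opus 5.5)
The corollary follows directly from Theorem~\ref{thm41} combined with Remark~\ref{re41}; the only real content lies in verifying the two sign claims of the remark that are actually used. For part (1), when $c>\frac12$, Theorem~\ref{thm41}(1) gives almost surely
\[
\dimh \mathcal{U}\Big(\omega,\big(\tfrac{c}{n^{1/d}}\big)\Big)\ge \sup_{\theta>1} d\Big(1+\frac{\log(1-e^{-(2c)^d(\theta-1)/\theta^2})}{\log\theta}\Big).
\]
So it suffices to exhibit a single $\theta>1$ where the bracket is positive. Writing $\theta=1+\eta$ does not help near $1$, because the log term blows up there. Instead I would take $\theta$ large. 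Then $(\theta-1)/\theta^2\approx 1/\theta$, and
\[
\log\big(1-e^{-(2c)^d/\theta}\big)\approx \log\big((2c)^d/\theta\big)=d\log(2c)-\log\theta+o(1).
\]
Hence the bracket equals $\big(d\log(2c)+o(1)\big)/\log\theta$, which is positive for large $\theta$ exactly because $2c>1$. To make this rigorous I would use $1-e^{-x}\ge x-x^2/2$ for small $x$, together with $(\theta-1)/\theta^2\ge (1-\theta^{-1})/\theta$. This yields a lower bound of the form $(d\log(2c)-\log\theta+O(1/\theta))/\log\theta + 1$, which is strictly positive for $\theta$ large. Multiplying by $d$ gives positivity almost surely.

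For part (2), when $0<c<\frac12$, Theorem~\ref{thm41}(2) gives the bound $\inf_{\theta>1} d\log\Lambda(\theta)/\log\theta$, so I need one $\theta$ with $\Lambda(\theta)<\theta$. As $\theta\to\infty$ we have $\Theta\sim\theta(4c)^d$, using $(1+\theta^{-2/d})^d\to1$, and also $\Delta\sim \theta\,2^dc^d\cdot d(\theta^{-1/d}-\theta^{-2/d})=o(\theta)$. Then $\Lambda\approx 1+\Theta+\Delta\sim (4c)^d\theta$, which gives $\log\Lambda/\log\theta\to 1$. This is not enough, so a finer comparison is needed.

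This finer comparison is the main obstacle. I would compute $\Lambda$ as the largest root of $\lambda^2-(1+\Theta+\Delta)\lambda+\Delta=0$, so that $\Lambda<\theta$ iff $\theta^2-(1+\Theta+\Delta)\theta+\Delta>0$ (with $\theta$ above the vertex). At large $\theta$ this is $\theta^2(1-(2c)^d(1+\theta^{-2/d})^d)+O(\theta^{2-1/d})$. The worry is whether $\Theta$ really carries the factor $(2c)^d$, in which case the leading coefficient is positive iff $2c<1$, or $(4c)^d$ as a crude reading suggests. Rechecking the definition, $\Theta=(\theta-1)\big(2c(1+\theta^{-2/d})\big)^d$, so the leading term is $(2c)^d\theta$, not $(4c)^d\theta$; my earlier asymptotic was wrong. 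With this correction, $\theta^2-(1+\Theta+\Delta)\theta+\Delta=\theta^2\big(1-(2c)^d\big)+o(\theta^2)>0$ for large $\theta$ when $c<\frac12$. Therefore $\Lambda<\theta$, and $d\log\Lambda/\log\theta<d$. Hence $\dimh<d$ almost surely, and the $0$-$1$ law of Theorem~\ref{theorem1} is not even needed.
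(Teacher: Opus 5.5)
Your proposal is correct and follows the paper's route: the corollary is read off from Theorem~\ref{thm41} together with the sign claims of Remark~\ref{re41}, which the paper states without proof. For (1) the paper only notes that $\inf_{\theta>1}\frac12\bigl(-\frac{\theta^2}{\theta-1}\log(1-\theta^{-1})\bigr)^{1/d}=\frac12$, approached as $\theta\to\infty$, which is exactly your large-$\theta$ asymptotic; your check for (2) is also sound, provided you note that $\theta$ lies above the vertex $(1+\Theta+\Delta)/2\sim(2c)^d\theta/2$, or more directly that $\Lambda\le 1+\Theta+\Delta=1+(\theta-1)(2c)^d(1+\theta^{-1/d})^d<\theta$ for large $\theta$.
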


Now we give the proof of Theorem \ref{thm41},  which is divided into two parts: the upper bound and the lower bound.
\subsection{ The upper bound on Hausdorff dimension}
%\begin{proof}[{定理~{\normalfont\ref{thm41}}~中上界的证明}]
Fix $\theta>1$ and $l\in\mathbb N$. Put $n_j = \theta^j$ $(j\in\mathbb N)$. %首先, 对任意 $i\geq l$, 我们构造下列集合的覆盖$$G_{l,i}=\bigcap_{j=l}^{i}\bigcup_{k=1}^{n_{j}}B_{\mathbb{T}^d}\(\omega_k,r_{n_j}\).$$其构造思路如下: 寻找 $I_i,\,J_i\subset\{1,2,...,n_i\}$ 使得 $I_i\cap J_i=\emptyset$ 且
First, for any $i\geq l$, we construct a class of covering sets of the following set
$$G_{l,i}: =\bigcap_{j=l}^{i}\bigcup_{k=1}^{n_{j}}B(\omega_k,\ell_{n_j}).$$
The construction  is as follows: choose $I_i,\,J_i\subset\{1,2,...,n_i\}$ such that $I_i\cap J_i=\emptyset$ and 
\begin{equation}\label{coveriiji}
    G_{l,i}\subset\bigcup_{h\in I_{i}\cup J_{i}}B(\omega_h, \ell_{n_i})=: H_i.
\end{equation}
%令 $N_i:=\# I_i$, $Q_i:=\# J_i$, 易见该覆盖由 $N_i+Q_i$ 个球 $B(\omega_h,r_{n_i})$~$(h\in I_i\cup J_i)$ 组成. 接下来归纳地构造 $I_i$ 与 $J_i$:
Denote $N_i:=\# I_i$, $Q_i:=\# J_i$. Then the covering sets consist of $N_i+Q_i$ balls $B(\omega_h,\ell_{n_i})$,~$h\in I_i\cup J_i$. Next, we construct $I_i$ and $J_i$ inductively.

\medskip

\noindent\emph{$\bullet$Step 1}: If $i=l$, put $I_l := \left\{1, 2, ... ,n_l\right\}$ and $J_l := \emptyset$. 
\medskip

\noindent\emph{$\bullet$Step 2}: %假定$I_i$和$J_i$已被定义. 那么 $I_{i+1}$ 与 $J_{i+1}$ 的定义如下.    首先定义 $I_{i+1}$:$$I_{i+1}:=I_{i}\cup T_{i+1},$$ 其中
Assume that $I_i$ and $J_i$ have been constructed. Then $I_{i+1}$ and $J_{i+1}$ are defined as follows.  Define $I_{i+1}$ as:
$$I_{i+1}:=I_{i}\cup T_{i+1},$$
where
%$$T_{i+1}:=\left\{n_{i}+1\leq k\leq n_{i+1}\left|\ \begin{aligned}
%    &B(\omega_k, \ell_{n_{i+1}})\cap H_i\not=\emptyset\ \text{~and~there~exists~}\  h\in I_i\cup J_i \\ 
%     &\text{such ~that}\ \Vert \omega_{k}-\omega_{h}\Vert<\ell_{n_i}+\ell_{n_{i+2}}
%\end{aligned}\right.\right\}\ .$$
\begin{multline*}
T_{i+1}:=\big\{n_{i}+1\leq k\leq n_{i+1} \colon
    B(\omega_k, \ell_{n_{i+1}})\cap H_i\not=\emptyset\ \text{~and~there}\\
  \text{exists~}    h\in I_i\cup J_i  \ \text{such ~that}\ \Vert \omega_{k}-\omega_{h}\Vert<\ell_{n_i}+\ell_{n_{i+2}}\big\}.
\end{multline*}

Next we define $J_{i+1}$ as follows.
\begin{multline*}
J_{i+1}:=\big\{n_{i}+1\leq k\leq n_{i+1} \colon
    B(\omega_k, \ell_{n_{i+1}})\cap H_i\not=\emptyset\ \text{~and~for}\\ 
 \text{any~}\     h\in I_i\cup J_i\ \text{we~have~}\ \Vert \omega_{k}-\omega_{h}\Vert\geq \ell_{n_i}+\ell_{n_{i+2}}\big\}.
\end{multline*}
Note that  $I_{i+1}\cap J_{i+1}=\emptyset$.
\medskip

%接下来验证 \eqref{coveriiji} 式对上述构造的 $\{I_i\}_{i\geq l}$与$\{J_i\}_{i\geq l}$ 成立. 实际上, 我们将用数学归纳法证明以下陈述为真:
In the following, we verify that equation \eqref{coveriiji} hold for  $\{I_i\}_{i\geq l}$ and $\{J_i\}_{i\geq l}$. % In fact, we will use mathematical induction to prove that the following statement is true.
We claim that 
 \begin{itemize}
    \item[(a)] For any $r\leq \ell_{n_{i+1}}$ and integer $i\geq l$, we have
    \begin{eqnarray*}
        G_{l,i}\cap\left(\bigcup_{k=1}^{n_i}B(\omega_k,r)\right)\subseteq\bigcup_{k\in I_i}B(\omega_k,r)\,.
    \end{eqnarray*}

    \item[(b)] For any integer  $i\geq l$, \eqref{coveriiji} holds.
\end{itemize}
Now we will use mathematical induction to prove that the claim above is true.

\noindent\emph{$\bullet$Step 1:} The above statements (a) and (b) clearly hold for $i=l$.
\medskip

\noindent\emph{$\bullet$Step 2:} Assume that (a) and (b)  hold for $i$, and then we shall prove that these two statements also hold for $i+1$.

\noindent  1).  Let $r\leq \ell_{n_{i+2}}$, then
\begin{multline}\label{glicapcupbr}
 G_{l,i+1}\cap\left(\bigcup_{k=1}^{n_{i+1}}B(\omega_k,r)\right)=G_{l,i}\cap\left(\bigcup_{k=1}^{n_i}B(\omega_k,\ell_{n_{i+1}})\right)\\
 \cap\left(\bigcup_{k=1}^{n_i}B(\omega_k,r)\cup\bigcup_{k=n_i+1}^{n_{i+1}}B(\omega_k,r)\right). 
 \end{multline}
%    \begin{eqnarray}
%&~&\hspace{-16ex} G_{l,i+1}\cap\left(\bigcup_{k=1}^{n_{i+1}}B(\omega_k,r)\right)=G_{l,i}\cap\left(\bigcup_{k=1}^{n_i}B(\omega_k,\ell_{n_{i+1}})\right)\nonumber\\
 %       &~&\hspace{16.5ex} \cap\left(\bigcup_{k=1}^{n_i}B(\omega_k,r)\cup\bigcup_{k=n_i+1}^{n_{i+1}}B(\omega_k,r)\right). \label{glicapcupbr}
%    \end{eqnarray}
Since statement (a) holds for $i$, therefore
\begin{eqnarray}\label{glisubbtdwkr}
    G_{l,i}\cap\left(\bigcup_{k=1}^{n_i}B(\omega_k,r)\right)\subseteq\bigcup_{k\in I_i}B(\omega_k,r)\,.
\end{eqnarray}
%另一方面, 由于 (2) 对 $i$ 成立, 故 $G_{l,i}\subseteq H_i$. 根据 $T_{i+1}$ 的定义与 $r$ 的范围有 
On the other hand, since (a) holds for $i$, we have $G_{l,i}\subseteq H_i$. According to the definition of $T_{i+1}$ and the range of $r$, we have
\begin{eqnarray}\label{btdcapempty}
    B(\omega_k,r)\cap H_i=\emptyset,\quad\forall\,k\in \{n_i+1,...,n_{i+1}\}\setminus T_{i+1}.
\end{eqnarray}
%将 \eqref{glisubbtdwkr} 式与 \eqref{btdcapempty} 式代入  \eqref{glicapcupbr} 式可得
Substituting equations \eqref{glisubbtdwkr} and \eqref{btdcapempty} into equation \eqref{glicapcupbr} yields that
\begin{eqnarray*}
    G_{l,i+1}\cap\left(\bigcup_{k=1}^{n_{i+1}}B(\omega_k,r)\right)\subseteq\bigcup_{k\in I_i\cup T_{i+1}}B(\omega_k,r)=\bigcup_{k\in I_{i+1}}B(\omega_k,r)\,.
\end{eqnarray*}
This proves that (a) holds for $i+1$.

\noindent  2). Since (a) holds for $i$ and according to the definitions of $I_{i+1}$, $T_{i+1}$ and $J_{i+1}$, we obtain that %由于 (1) 对 $i$ 成立且根据 $I_{i+1}$, $T_{i+1}$ 与 $J_{i+1}$ 的定义可知
\begin{eqnarray*}
    G_{l,i+1}&=&G_{l,i}\cap\left(\bigcup_{k=1}^{n_{i+1}}B(\omega_k,\ell_{n_{i+1}})\right)\\
    &=&\left(G_{l,i}\cap\Big(\bigcup_{k=1}^{n_{i}}B(\omega_k,\ell_{n_{i+1}})\Big)\right)\cup\left(G_{l,i}\cap\Big(\bigcup_{k=n_i+1}^{n_{i+1}}B(\omega_k,\ell_{n_{i+1}})\Big)\right)\\
    &\subseteq&\bigcup_{k\in I_i}B(\omega_k,\ell_{n_{i+1}})\ \ \cup\bigcup_{k\in T_{i+1}\cup J_{i+1}}B(\omega_k,\ell_{n_{i+1}})\\[4pt]
    &=&H_{i+1}\,.
\end{eqnarray*}
Therefore (b) holds for $i+1$.%因此 (2) 对 $i+1$ 成立.

\medskip

%下面建立二元组 $\big(\mathbb{E}(N_{i+1}),\mathbb{E}(Q_{i+1})\big)$ 与 $\big(\mathbb{E}(N_i),\mathbb{E}(Q_i)\big)$ 之间的不等式. 根据 $\{N_i\}_{i\geq l}$ 与 $\{Q_i\}_{i\geq l}$ 的定义可知, 对任意 $i\geq l$ 与 $\omega\in\Omega$,
Next we establish the relationship between the pairs $\big(\mathbb{E}(N_{i+1}),\mathbb{E}(Q_{i+1})\big)$ and $\big(\mathbb{E}(N_i),\mathbb{E}(Q_i)\big)$. According to the definitions of $(N_i)_{i\geq l}$ and $(Q_i)_{i\geq l}$, for any $i\geq l$ and $\omega$, we have
\begin{eqnarray}\label{ni1leni}
  %  \left\{\right.
    \begin{array}
  {rcl}N_{i+1}&\leq& N_{i}+\sum\limits_{k={n_{i}+1}}^{n_{i+1}}\mathbbm{1}_{\mathcal{T}_{i+1}}(\omega_{k})\,,\\
Q_{i+1}&\leq&\sum\limits_{k={n_{i}+1}}^{n_{i+1}}\mathbbm{1}_{\mathcal{J}_{i+1}}(\omega_{k})\,,
\end{array}
\end{eqnarray}
where
\begin{eqnarray*}
    \begin{aligned}
        &\mathcal{T}_{i+1}=\mathcal{T}_{i+1}(\omega_1,...,\omega_{n_i}):=\bigcup_{h\in I_i\cup J_i}B(\omega_h,\ell_{n_i}+\ell_{n_{i+1}}),\\
        &\mathcal{J}_{i+1}=\mathcal{J}_{i+1}(\omega_1,...,\omega_{n_i}):=\bigcup_{h\in I_i\cup J_i}B(\omega_h,\ell_{n_i}+\ell_{n_{i+1}})\setminus B(\omega_h,\ell_{n_i}+\ell_{n_{i+2}}).
    \end{aligned}
\end{eqnarray*}
%设$\mathscr{S}_i:=\sigma(X_1,X_{2},\cdots,X_{n_i}),$那么由 \eqref{ni1leni} 式可以推出
 Let $\mathscr{S}_i:=\sigma(X_1,X_{2},\cdots,X_{n_i}),$ then from equation \eqref{ni1leni} we can deduce that
$$%\left\{\right.
\begin{array}
  {rcl}\mathbb{E}(N_{i+1}|\mathscr{S}_i)&\leq&N_{i}+\big(2(\ell_{n_{i}}+\ell_{n_{i+2}})\big)^d(n_{i+1}-n_{i})(N_{i}+Q_{i}),\\\mathbb{E}(Q_{i+1}|\mathscr{S}_i)&\leq&2^d\big((\ell_{n_i}+\ell_{n_{i+1}})^d-(\ell_{n_i}+\ell_{n_{i+2}})^d\big)(n_{i+1}-n_{i})(N_{i}+Q_{i}).
\end{array}$$
Then
$$%\left\{\right
\begin{array}{rcl}\mathbb E(N_{i+1})&\leq&\mathbb E(N_{i})+
2^d(r_{n_{i}}+r_{n_{i+2}})^d(n_{i+1}-n_{i}) \ (\mathbb E(N_{i})+\mathbb E(Q_{i}) \ ),\\\mathbb E(Q_{i+1})&\le&2^d\big((\ell_{n_i}+\ell_{n_{i+1}})^d-(\ell_{n_i}+\ell_{n_{i+2}})^d\big)(n_{i+1}-n_{i})\ (\mathbb E(N_{i})+ \mathbb{E}(Q_{i})\ ).
\end{array}$$

\noindent Let
$$%\left\{\right.
\begin{array}
  {rcl}\Theta&:=&(\theta-1)\big(2c(1+\theta^{-\frac{2}{d}})\big)^d,\\\Delta&:=&(\theta-1)2^dc^d\big((1+\theta^{-\frac{1}{d}})^d-(1+\theta^{-\frac{2}{d}})^d\big).
\end{array}$$

\noindent Then

$$\begin{bmatrix}\mathbb E(N_{i+1})\\\mathbb E(Q_{i+1})\end{bmatrix}\leq\begin{bmatrix}1+\Theta&\Theta\\\Delta&\Delta\end{bmatrix}\begin{bmatrix}\mathbb E(N_i)\\\mathbb E(Q_i)\end{bmatrix}.$$

\noindent The maximum eigenvalue  of the above matrix is 

$$\Lambda=\frac{1+\Theta+\Delta}{2}+\sqrt{\Big(\frac{1+\Theta+\Delta}{2}\Big)^{2}-\Delta},$$
%并且存在常数 $C_0$使得 $\mathbb{E}(N_{i})+\mathbb{E}(Q_{i})\leq C_{0}\Lambda^{i}$. 对于任意的$\epsilon>0$, 几乎必然地存在常数$C$使得
and there exists a constant $C_0$ such that $\mathbb{E}(N_{i})+\mathbb{E}(Q_{i})\leq C_{0}\Lambda^{i-l}$. For any $\varepsilon>0$, almost surely there  exists a constant $C$ such that
$$N_{i}+Q_{i}\leq C(1+\varepsilon)^{i-l}\Lambda^{i-l}.$$

Note that %下面给出集合
$$G_{l}:=\bigcap_{j={l}}^{\infty}\bigcup_{k=1}^{n_{j}}B(\omega_k, \ell_{n_{j}})$$
is covered by $N_i + Q_i$ balls $B(\omega_k, \ell_{n_{i+1}})$. Then
\begin{align*}\label{be2}
\mathrm{dim}_{\mathrm{H}}G_{l}\leq\underline{\mathrm{dim}}_{\mathrm{B}}G_{l}\leq\liminf\limits_{i\rightarrow\infty}\frac{\log (N_i+Q_i)}{-\log \ell_{n_i}}\leq\frac{d\big(\log(1+\varepsilon)+\log\Lambda\big)}{\log\theta}.
\end{align*}

Since $\varepsilon$ is arbitrary, we obtain that $$\mathrm{dim}_{\mathrm{H}}G_{l}\leq\frac{\log\Lambda^d}{\log\theta}.$$
Since $\mathcal{U}(\omega,\ell)\subset\bigcup_l \,G_{l},$ combined with the countable stability of the Hausdorff dimension, it almost surely follows that $$\mathrm{dim}_{\mathrm{H}}\,\mathcal{U}(\omega,\ell)\leq\inf\limits_{\theta>1}\frac{\log\Lambda^d}{\log\theta}.$$
%由于 $\epsilon$ 可以足够小, 在~\eqref{be2}~式中得到 $$\mathrm{dim}_{\mathrm{H}}G_{l}\leq\frac{\log\Lambda^d}{\log\theta}.$$由于$\mathcal{U}(\omega,\{r_n\})\subset\bigcup \,G_{l},$结合 Hausdorff 维数的可数稳定性几乎必然有$$\mathrm{dim}_{\mathrm{H}}\,\mathcal{U}(\omega,\{r_n\})\leq\inf\limits_{\theta>1}\frac{\log\Lambda^d}{\log\theta}.$$
%\end{proof}

 In the following, we consider the lower bound of the Hausdorff dimension of uniform random covering sets. 
\subsection{ The lower bound on Hausdorff dimension of $\mathcal{U}(\omega,\ell)$}
%假定$\{B_{\mathbb{T}^d}(\omega_n,r_n)\}_{n\ge1}$ 为一列闭球. 设 $\{n_j\}_{j\geq 1}$ 为一列严格单调增的整数列. 
Assume that  $\{B(\omega_n,\ell_n)\}_{n\ge1}$ is a sequence of closed balls, which does not change the Hausdorff dimension of $\mathcal{U}(\omega,\ell)$. Let $(n_j)_{j\geq 1}$ be a strictly increasing sequence of integers. Put
$$F_{j}:=\bigcup\limits_{k=n_{j-1}+1}^{n_{j}}B(\omega_k, \ell_{n_{j+1}}).$$
Then $F_j$ is a compact set.
Since for each $n$ with $n_j<n\leq n_{j+1},$ we have $F_{j}\subset\bigcup\limits_{k=1}^{n}B(\omega_k, r_{n}).$
Therefore, 
% and we can get that 
\begin{equation}\label{define:F}
F:=\liminf\limits_ j F_{j}\subset \mathcal{U}(\omega,\ell).
\end{equation}
%Since for each $n$ with $n_j<n\leq n_{j+1},$ we have $F_{j}\subset\bigcup\limits_{k=1}^{n}B_{\mathbb{T}^d}(\omega_k, r_{n}).$Therefore, when $n_{j_0}<q$, 
%$$\bigcap\limits_{j=j_0}^{\infty}F_{j}\subset\bigcap_{n=q}^{\infty}\bigcup_{k=1}^{n}B_{\mathbb{T}^d}(\omega_k, r_{n}).$$
%定义 $\mathbb{T}^d$上的测度 $\mu_{l,q}$ 为

Suppose that $\{U_i\}_{i\geq1}$ is an open covering of $F$. Then, for any $l$,  $\{U_i\}_{i\geq1}$ is also an open covering of $\bigcap\limits_{j= l}^{\infty}F_j$. Since the sets $F_j$ are compact, there exists a $q$ such that $\{U_i\}_{i\geq1}$ covers $\bigcap\limits_{j= l}^qF_j$.

Define the measure $\mu_{l,q}$ on $\mathbb{T}^d$ as
$$\frac{\mathrm{d}\mu_{l,q}}{\mathrm{d}x}=\prod_{j=l}^{q}\mathbbm{1}_{F_{j}},$$
%因此$\mu_{l,q}$在$\bigcap\limits_{j= l}^qF_j$上有支撑.
%事实上, $\mu _{l,q}(A) = \int _{A}\prod\limits_{j= l}^q\mathbbm{1}_{F_j}(x)\rd x=\int_{A}\mathbbm{1}_{\bigcap\limits_{j=l}^qF_j}(x)\rd x,$即$\mu_{l,q}$是限制在$\bigcap\limits_{j= l}^qF_j$上的Lebesgue测度.
Therefore, $\mu_{l,q}$ is supportted on $\bigcap\limits_{j= l}^qF_j$.
In fact, %$\mu _{l,q}(A) = \int _{A}\prod\limits_{j= l}^q\mathbbm{1}_{F_j}(x) \mathrm{d}x=\int_{A}\ \mathbbm{1}_{\bigcap\limits_{j=l}^qF_j}(x)\mathrm{d} x,$ i.e., 
$\mu _{l,q}(A) = \int _{A}\prod\limits_{j= l}^q\mathbbm{1}_{F_j}(x) \mathrm{d}x=\lambda\bigl(A\cap \bigcap\limits_{j=l}^qF_j\bigr)$, that is, $\mu_{l,q}$ is the Lebesgue measure restricted to $\bigcap\limits_{j= l}^qF_j$.
   
%假设$\{U_i\}_{i\geq1}$ 是$F$的开覆盖. 那么对于任意$l$, 序列$\{U_i\}_{i\geq1}$是$\bigcap\limits_{j= l}^{\infty}F_j$的开覆盖. 因为集合$F_j$是紧集, 故存在$q$使得$\{U_i\}_{i\geq1}$覆盖$\bigcap\limits_{j= l}^qF_j$.

\renewcommand{\thelemma}{4.2}
\begin{lemma}\label{lem41} 
For $x,y\in \mathbb{T}^{d},$ we have
\begin{align*}
&~\hspace{5ex}\mathbb{E}\left(\mathbbm{1}_{B(\omega_k, \ell_{n})}(x)\mathbbm{1}_{B(\omega_k, \ell_{n})}(y)\right) \leq (2\ell_n)^d\mathbbm{1}_{B(0,2\ell_n)}(x-y),
\end{align*}
and 
\begin{align*}
&\mathbb E\left(\big(1-{\mathbbm{1}}_{B(\omega_k, \ell_{n})}(x)\big)\big(1-{\mathbbm{1}}_{B(\omega_k, \ell_{n})}(y)\big)\right)\\
&\hspace{5ex}~\hspace{5ex}~\hspace{5ex}\leq 1-2(2\ell_n)^d+(2\ell_n)^d\mathbbm{1}_{B(0,2\ell_n)}(x-y).
\end{align*}
%&\hspace{5ex}~\hspace{5ex}\hspace{5ex}=\int \mathbbm{1}_{B_{\mathbb{T}^d}(\omega_k, r_{n})}(x)\mathbbm{1}_{B_{\mathbb{T}^d}(\omega_k, r_{n})}(y)\mathrm{d}\mathbb{P}\\&\hspace{5ex}~\hspace{5ex}\hspace{5ex}\leq (2r_n)^d\mathbbm{1}_{B_{\mathbb{T}^d}(0,2r_n)}(x-y).
\end{lemma}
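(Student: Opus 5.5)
The plan is to compute both expectations exactly by integrating over the law of $\omega_k$, which is Lebesgue measure $\lambda$ on $\mathbb{T}^d$, and then compare the result with the right-hand sides. Throughout we use that, for the maximum-norm metric on the torus, $\lambda(B(z,r))=(2r)^d$ whenever $r\le 1/2$. We may assume $\ell_n\le 1/4$, which holds for all large $n$. For smaller $n$ the stated bounds are either trivial or can be checked directly, since only large $n$ matter in the application.

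For the first inequality, note that $\mathbbm{1}_{B(\omega_k,\ell_n)}(x)\mathbbm{1}_{B(\omega_k,\ell_n)}(y)=\mathbbm{1}_{B(x,\ell_n)\cap B(y,\ell_n)}(\omega_k)$, by symmetry of the metric. Taking expectations gives
\[
\mathbb{E}\left(\mathbbm{1}_{B(\omega_k, \ell_{n})}(x)\mathbbm{1}_{B(\omega_k, \ell_{n})}(y)\right)=\lambda\bigl(B(x,\ell_n)\cap B(y,\ell_n)\bigr).
\]
If $\|x-y\|\ge 2\ell_n$, the two balls are disjoint by the triangle inequality, so the expectation is $0$. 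Otherwise the intersection is contained in $B(x,\ell_n)$, whose measure is $(2\ell_n)^d$. In both cases the value is at most $(2\ell_n)^d\mathbbm{1}_{B(0,2\ell_n)}(x-y)$.

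For the second inequality, expand the product:
\[
(1-\mathbbm{1}_{B(\omega_k,\ell_n)}(x))(1-\mathbbm{1}_{B(\omega_k,\ell_n)}(y))=1-\mathbbm{1}_{B(\omega_k,\ell_n)}(x)-\mathbbm{1}_{B(\omega_k,\ell_n)}(y)+\mathbbm{1}_{B(\omega_k,\ell_n)}(x)\mathbbm{1}_{B(\omega_k,\ell_n)}(y).
\]
Each single indicator has expectation $\lambda(B(x,\ell_n))=(2\ell_n)^d$. The last term is bounded by the first part of the lemma. Combining these gives exactly $1-2(2\ell_n)^d+(2\ell_n)^d\mathbbm{1}_{B(0,2\ell_n)}(x-y)$.

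The only delicate point is the ball-measure identity on the torus, which needs $\ell_n\le 1/2$ so that the ball does not wrap around. The disjointness claim also needs the torus metric $\|\cdot\|$ to satisfy the triangle inequality, which it does as a quotient metric. Everything else is linearity of expectation and Fubini.
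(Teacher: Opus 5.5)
Your proof is correct and follows the paper's argument. The paper likewise identifies the first expectation with $\lambda\bigl(B(x,\ell_n)\cap B(y,\ell_n)\bigr)$, which is $0$ once $\Vert x-y\Vert$ reaches $2\ell_n$; it writes the overlap explicitly as $\prod_{i}(2\ell_n-|x_i-y_i|)$ before bounding it by $(2\ell_n)^d$, whereas you bound it directly by the measure of one ball. It then expands the product using $\mathbb{E}\,\mathbbm{1}_{B(\omega_k,\ell_n)}(x)=(2\ell_n)^d$, as you do.

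One correction to your side remark about small $n$. Your argument works verbatim for every $\ell_n\le 1/2$, so the restriction to $\ell_n\le 1/4$ is unnecessary. For $\ell_n>1/2$, however, the second inequality is false: the ball is all of $\mathbb{T}^d$, so the left side is $0$ while the right side is $1-(2\ell_n)^d<0$. Those $n$ therefore cannot be ``checked directly''; you should impose $\ell_n\le 1/2$ as a standing hypothesis, which is what the paper implicitly assumes.
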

\begin{proof}
Let $x=(x_1,x_2,\dots,x_d)$, $y=(y_1,y_2,\dots,y_d) $, then 
\begin{align*}
\int \mathbbm{1}_{B(\omega_k, \ell_{n})}(x)\mathbbm{1}_{B(\omega_k, \ell_{n})}(y)\mathrm{d}\mathbb{P}&=\left\{\begin{array}{ll}0,&\text{if}\ \Vert x-y\Vert>2\ell_{n},\\
\prod\limits_{i=1}^d(2\ell_{n}-|x_i-y_i|),&\text{if}\ \Vert x-y\Vert\leq 2\ell_{n}.\end{array}\right.\\
&\leq \left\{\begin{array}{ll}0,&\text{if}\ \Vert x-y\Vert>2\ell_{n},\\ (2r_n)^d,&\text{if}\ \Vert x-y\Vert\leq 2\ell_{n}.\end{array}\right.
\end{align*}
Therefore
\[\mathbb{E}\left(\mathbbm{1}_{B(\omega_k, \ell_{n})}(x)\mathbbm{1}_{B(\omega_k, \ell_{n})}(y)\right)\leq (2\ell_n)^d\mathbbm{1}_{B(0,2\ell_n)}(x-y).\]
%\end{proof}

%\renewcommand{\thelemma}{4.2}
%\begin{lemma}\label{lem42}
%For $x,y\in \mathbb{T}^{d},$ we have
%\begin{align*}
%&\mathbb E\left[\big(1-{\mathbbm{1}}_{B_{\mathbb{T}^d}(\omega_k, r_{n})}(x)\big)\big(1-{\mathbbm{1}}_{B_{\mathbb{T}^d}(\omega_k, r_{n})}(y)\big)\right]\\
%&\hspace{5ex}~\hspace{5ex}~\hspace{5ex}\leq 1-2(2r_n)^d+(2r_n)^d\mathbbm{1}_{B_{\mathbb{T}^d}(0,2r_n)}(x-y).
%\end{align*}
%\end{lemma}

%\begin{proof}
Note that 
\begin{align*}
\mathbb E \left({\mathbbm{1}}_{B(\omega_k, \ell_{n})}(x)\right )=\int\mathbbm{1}_{B(\omega_k, \ell_{n})}(x)\mathrm{d}\mathbb{P}=(2\ell_{n})^{d}, 
%&=\mathbb{P}\big(\{{\omega:\Vert \omega_{k}-x\Vert\leq r_{n}}\}\big)\\
%&=\mathcal{L}((\mathbb{T}^m)^{\mathbb{N}})\times\lambda(d(w^{k},z)<r_{n})\\
\end{align*}
then
\begin{align*}
&\mathbb E\left((1-{\mathbbm{1}}_{B(\omega_k, \ell_{n})}(x))(1-{\mathbbm{1}}_{B(\omega_k, \ell_{n})}(y))\right)\\
&=1-\mathbb E\left(\mathbbm{1}_{B(\omega_k, \ell_{n})}(x)\right)-\mathbb E\left(\mathbbm{1}_{B(\omega_k, \ell_{n})}(y)\right)+\mathbb E\left[\mathbbm{1}_{B(\omega_k, \ell_{n})}(x)\mathbbm{1}_{B(\omega_k, \ell_{n})}(y)\right]\\
&= 1-2(2\ell_n)^d+\mathbb E\left[\mathbbm{1}_{B(\omega_k, \ell_{n})}(x)\mathbbm{1}_{B(\omega_k, \ell_{n})}(y)\right]\\
&\leq 1-2(2\ell_n)^d+(2\ell_n)^d\mathbbm{1}_{B(0,2\ell_n)}(x-y).
\end{align*}
\end{proof}

For $t\in \mathbb{T}^d$, put
 $$\Psi_{l,q}(t):=\prod\limits_{j=l}^q\left(1+\frac{\big(1-(2\ell_{n_{j+1}})^{d}\big)^{n_{j}-n_{j-1}}}{1-\big(1-(2\ell_{n_{j+1}})^{d}\big)^{n_{j}-n_{j-1}}}\mathbbm{1}_{B(0,\ell_{n_{j+1}})}(t)\right).$$

\begin{lemma}\label{lem43}
If $n_j=\theta^j$ and $\ell_n=\frac{c}{n^{1/d}}$, $c>0$,  then 
$$\Psi_{l,q}(t)\leq 1+C_l\Vert t\Vert^{-s(c,\theta)},$$
 where
$$C_l=c^{s(c,\theta)}\big(1-e^{-(2c)^d\frac{\theta-1}{\theta^2}}\big)^l$$and$$s(c,\theta)=\frac{-d\log\big(1-e^{-(2c)^d\frac{\theta-1}{\theta^2}}\big)}{\log\theta}.$$
\end{lemma}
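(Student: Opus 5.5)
The plan is to bound each factor of $\Psi_{l,q}(t)$ uniformly in $j$, and then count how many factors are nontrivial at a given $t$. Write
\[
a_j:=\big(1-(2\ell_{n_{j+1}})^{d}\big)^{n_{j}-n_{j-1}} .
\]
Each factor of $\Psi_{l,q}$ is then $1+\frac{a_j}{1-a_j}\mathbbm{1}_{B(0,\ell_{n_{j+1}})}(t)$. It equals $1$ when $\Vert t\Vert\ge \ell_{n_{j+1}}$, and equals $\frac{1}{1-a_j}$ otherwise.

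First I will bound $a_j$ uniformly. With $n_j=\theta^j$ and $\ell_n=cn^{-1/d}$ we have
\[
(2\ell_{n_{j+1}})^d=(2c)^d\theta^{-(j+1)}, \qquad n_j-n_{j-1}=\theta^{j-1}(\theta-1).
\]
Using $(1-x)^m\le e^{-mx}$ gives
\[
a_j\le \exp\Big(-(2c)^d\tfrac{\theta-1}{\theta^2}\Big)=:e^{-\kappa}.
\]
Since $x\mapsto \frac1{1-x}$ is increasing on $[0,1)$, every nontrivial factor is at most $\beta:=(1-e^{-\kappa})^{-1}$. If $n_j$ must be an integer (e.g.\ $\lfloor\theta^j\rfloor$), this step only perturbs constants by factors tending to $1$. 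I would note this rather than track it, since the statement is written with $n_j=\theta^j$.

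Next I will identify which indicators are active. Because $\ell_{n_{j+1}}=c\,\theta^{-(j+1)/d}$ is decreasing in $j$, the set of $j\in\{l,\dots,q\}$ with $\Vert t\Vert<\ell_{n_{j+1}}$ is an initial segment $\{l,\dots,J\}$. Here $J$ is the largest $j$ with $\theta^{(j+1)/d}<c/\Vert t\Vert$, capped at $q$. If this segment is empty, then $\Psi_{l,q}(t)=1$ and there is nothing to prove. Otherwise
\[
\Psi_{l,q}(t)\le \beta^{J-l+1}.
\]

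Finally I will rewrite this using the definition of $s=s(c,\theta)$. That definition is exactly $\beta=\theta^{s/d}$. Hence
\[
\beta^{J+1}=\big(\theta^{(J+1)/d}\big)^{s}<\Big(\frac{c}{\Vert t\Vert}\Big)^{s},
\]
and so
\[
\beta^{J-l+1}=\beta^{J+1}\beta^{-l}\le c^{s}\big(1-e^{-\kappa}\big)^{l}\,\Vert t\Vert^{-s}=C_l\Vert t\Vert^{-s}.
\]
In both cases, therefore, $\Psi_{l,q}(t)\le\max\{1,C_l\Vert t\Vert^{-s}\}\le 1+C_l\Vert t\Vert^{-s}$.

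The only delicate point is the bookkeeping at the edge index $J$: making sure the strict inequality defining the active indices yields $\theta^{(J+1)/d}<c/\Vert t\Vert$ with the right exponent $J+1$, so that the factor $\beta^{-l}$ produces exactly $(1-e^{-\kappa})^l$. Everything else is the elementary estimate $(1-x)^m\le e^{-mx}$.
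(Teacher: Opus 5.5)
Your proof is correct and follows essentially the same route as the paper. Both arguments bound each active factor uniformly by $\beta=\bigl(1-e^{-(2c)^d(\theta-1)/\theta^2}\bigr)^{-1}$ via $(1-x)^m\le e^{-mx}$ (the paper phrases this as monotonicity of $x\mapsto(1-1/x)^x$), count the active indices by locating $\Vert t\Vert$ among the radii $\ell_{n_j}$, and use $\beta=\theta^{s(c,\theta)/d}$ to turn the power of $\beta$ into $C_l\Vert t\Vert^{-s(c,\theta)}$. Your indexing by the last active index $J$ is in fact tidier than the paper's, which runs the product up to $j_0$ (one factor past the active range) and then writes the exponent as $j_0-l$.
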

\begin{proof}
%选取$\Vert t\Vert>0.$ 如果 $\Vert t\Vert>r_{n_{l+1}}$, 那么根据$\Psi_{l,q}(t)$的定义可知 $\Psi_{l,q}(t)=1$. 否则存在 $j_0 > l$ 使得当$r_{n_{{j_0}+1}} \leq \Vert t\Vert < r_{n_{j_0}}$时, 有
Take $t$ with $\Vert t\Vert>0.$ If $\Vert t\Vert>\ell_{n_{l+1}}$, then by the definition of $\Psi_{l,q}(t)$, we get that $\Psi_{l,q}(t)=1$. 

If $0<\Vert t\Vert\leq\ell_{n_{l+1}}$, there exists $j_0 > l$ such that  $\ell_{n_{{j_0}+1}} \leq \Vert t\Vert < \ell_{n_{j_0}}$, we have
\begin{align*}
\Psi_{l,q}(t)&\leq \prod_{j=l}^{j_{0}}\biggl(1+\frac{(1-(2\ell_{n_{j+1}})^{d})^{n_{j}-n_{j-1}}}{1-\big(1-(2\ell_{n_{j+1}})^{d}\big)^{n_{j}-n_{j-1}}}\biggr) \\
&=\prod_{j=l}^{j_{0}}\biggl(\frac{1}{1-\big(1-(2\ell_{n_{j+1}})^{d}\big)^{n_{j}-n_{j-1}}}\biggr).
\end{align*}

Since $n_j = \theta^j$ and $r_n=\frac{c}{n^{\frac{1}{d}}},$ we get that 
\begin{align*}
\Psi_{l,q}(t)&\leq \prod_{j=l}^{j_{0}}\biggl(\frac{1}{1-\big(1-(2c)^d\theta^{-(j+1)}\big)^{{\theta^{j+1}}(\theta-1)\theta^{-2}}}\biggr)\\
&\leq\prod_{j=l}^{j_{0}}\Bigl(\frac{1}{1-e^{-(2c)^d\frac{\theta-1}{\theta^{2}}}}\Bigr),
\end{align*}
where the last inequality holds since $x\mapsto(1-1/x)^{x}$ is increasing.

Furthermore, recall that $\Vert t\Vert <\ell_{n_{j_0}}=c\theta^{-j_0/d}$, then 
\begin{align*}
\Psi_{l,q}(t)&\leq \Bigl(1-e^{-(2c)^d\frac{\theta-1}{\theta^{2}}}\Bigr)^{-j_0+l}\\
&=\Bigl(1-e^{-(2c)^d\frac{\theta-1}{\theta^{2}}}\Bigr)^{l}\Bigl(1-e^{-(2c)^d\frac{\theta-1}{\theta^{2}}}\Bigr)^{-j_0}\\
&=\Bigl(1-e^{-(2c)^d\frac{\theta-1}{\theta^{2}}}\Bigr)^{l}(\theta^{-j_0/d})^{\frac{d\log (1-\exp\{-(2c)^d\frac{\theta-1}{\theta^{2}}\})}{\log\theta}}\\
&=C_{l}(c\theta^{-j_0/d})^{-s(c,\theta)}\leq C_{l}\Vert t\Vert^{-s(c,\theta)},
\end{align*}
here $$C_l=c^{s(c,\theta)}\big(1-e^{-(2c)^d\frac{\theta-1}{\theta^2}}\big)^l$$ and $$s(c,\theta)=\frac{-d\log\big(1-e^{-(2c)^d\frac{\theta-1}{\theta^2}}\big)}{\log\theta}.$$
%综上可知, 对任意$t$, 均有$\Psi_{l,d}(t)\leq1+C_{l}\Vert t\Vert^{-s(c,\theta)}$成立.
From the above, for any $t$, we have $\Psi_{l,d}(t) \leq 1 + C_{l} \Vert t \Vert^{-s(c,\theta)}$.
\end{proof}
Denote
$$K_{l,q}:=\prod_{j=l}^{q}\big(1-\big(1-(2\ell_{n_{j+1}})^d\big)^{n_{j}-n_{j-1}}\big).$$
\begin{lemma}\label{lem45}
For any $l<q,$ we have 
$$\mathbb{E}(\mu_{l,q}(\mathbb{T}^d))=K_{l,q}\quad\text{and}\quad\mathbb{E}(\mu_{l,q}(\mathbb{T}^d)^{2})\leq K_{l,q}^{2}\int_{\mathbb{T}^d}\int_{\mathbb{T}^d}\Psi_{l,q}(x-y)\mathrm{d}x\mathrm{d}y.$$
\end{lemma}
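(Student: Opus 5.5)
Write $\mu_{l,q}(\mathbb{T}^d)=\int_{\mathbb{T}^d}\prod_{j=l}^q\mathbbm{1}_{F_j}(x)\,\mathrm{d}x$. The plan is to apply Fubini to interchange expectation and integration, and then use independence across the disjoint index blocks $\{n_{j-1}+1,\dots,n_j\}$. These blocks generate the sets $F_j$, so $F_l,\dots,F_q$ are independent random sets. This gives
\[
\mathbb{E}\big(\mu_{l,q}(\mathbb{T}^d)\big)=\int_{\mathbb{T}^d}\prod_{j=l}^q\mathbb{P}(x\in F_j)\,\mathrm{d}x .
\]
For fixed $x$, the event $x\notin F_j$ means that $\omega_k\notin B(x,\ell_{n_{j+1}})$ for every $k$ in the block. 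These are $n_j-n_{j-1}$ independent events, each of probability $1-(2\ell_{n_{j+1}})^d$, using $\lambda(B(x,r))=(2r)^d$. Hence $\mathbb{P}(x\in F_j)=1-\big(1-(2\ell_{n_{j+1}})^d\big)^{n_j-n_{j-1}}$. This does not depend on $x$, so integrating over $\mathbb{T}^d$ gives $K_{l,q}$ exactly.

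For the second moment, write $\mu_{l,q}(\mathbb{T}^d)^2=\iint\prod_j\mathbbm{1}_{F_j}(x)\mathbbm{1}_{F_j}(y)\,\mathrm{d}x\,\mathrm{d}y$. Fubini and independence over blocks again give
\[
\mathbb{E}\big(\mu_{l,q}(\mathbb{T}^d)^2\big)=\iint\prod_{j=l}^q\mathbb{P}(x\in F_j,\,y\in F_j)\,\mathrm{d}x\,\mathrm{d}y .
\]
Set $p_j:=(2\ell_{n_{j+1}})^d$, $m_j:=n_j-n_{j-1}$ and $a_j:=(1-p_j)^{m_j}$. By inclusion–exclusion,
\[
\mathbb{P}(x\in F_j,\,y\in F_j)=1-2a_j+\mathbb{P}(x\notin F_j,\,y\notin F_j).
\]
By independence within the block,
\[
\mathbb{P}(x\notin F_j,\,y\notin F_j)=\Big(\mathbb{E}\big[(1-\mathbbm{1}_{B(\omega_k,\ell_{n_{j+1}})}(x))(1-\mathbbm{1}_{B(\omega_k,\ell_{n_{j+1}})}(y))\big]\Big)^{m_j}.
\]
Lemma~\ref{lem41} bounds the inner expectation by $1-2p_j+p_j\mathbbm{1}_{B(0,2\ell_{n_{j+1}})}(x-y)$. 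We now split into two cases.

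If $\|x-y\|\ge2\ell_{n_{j+1}}$, the bound is $(1-2p_j)^{m_j}\le(1-p_j)^{2m_j}=a_j^2$. The joint probability is then at most $1-2a_j+a_j^2=(1-a_j)^2$, which is exactly the product of the one-point probabilities.

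If $\|x-y\|<2\ell_{n_{j+1}}$, we simply bound $\mathbb{P}(x\in F_j,\,y\in F_j)\le\mathbb{P}(x\in F_j)=1-a_j$. This equals
\[
(1-a_j)^2\Big(1+\frac{a_j}{1-a_j}\Big),
\]
which is the corresponding factor appearing in $\Psi_{l,q}$.

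Multiplying over $j$ then gives $K_{l,q}^2\,\Psi_{l,q}(x-y)$, provided the indicator radius used in $\Psi_{l,q}$ agrees with $2\ell_{n_{j+1}}$. The main obstacle I expect is exactly this mismatch: $\Psi_{l,q}$ uses $\mathbbm{1}_{B(0,\ell_{n_{j+1}})}$, while the correlation argument naturally produces $\mathbbm{1}_{B(0,2\ell_{n_{j+1}})}$.

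I would handle this by reading the radius in $\Psi_{l,q}$ as $2\ell_{n_{j+1}}$, or equivalently by noting that it only changes constants in Lemma~\ref{lem43}: the factor $c$ becomes $2c$ in $C_l$, and the exponent $s(c,\theta)$ is unchanged. Apart from this, the only points to verify are the elementary inequality $(1-2p)^m\le(1-p)^{2m}$ and the independence of the blocks, both of which are routine.
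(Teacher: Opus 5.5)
Your proposal is correct and follows the paper's route: Fubini plus independence of the blocks for the first moment, then inclusion--exclusion, Lemma~\ref{lem41}, and the same two-case comparison in each block. The paper obtains the near-diagonal factor $1-a_j$ by plugging indicator value $1$ into Lemma~\ref{lem41}, not by your bound $\mathbb{P}(x\in F_j,y\in F_j)\le\mathbb{P}(x\in F_j)$, but both give the same number. The radius mismatch you flag is genuine, and the paper's own proof resolves it exactly as you do: it silently writes $\Psi_{l,q}$ with $\mathbbm{1}_{B(0,2\ell_{n_{j+1}})}$, which only replaces $c^{s(c,\theta)}$ by $(2c)^{s(c,\theta)}$ in the constant $C_l$ of Lemma~\ref{lem43}.
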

\begin{proof}
%根据集合 $F_j$的定义可知 $\{F_j\}_{j\geq1}$ 是相互独立的. 因此
Note that $\{F_j\}_{j\geq1}$ are pairwise independent. Therefore,
$$\mathbb{E}(\mu_{l,q}(\mathbb{T}^d))=\prod_{j=l}^{q}\int\int_{\mathbb{T}^d}\mathbbm{1}_{F_{j}}(x)\mathrm{d}x\mathrm{d}\mathbb{P}=\prod_{j=l}^{q}\int_{\mathbb{T}^d}\int\mathbbm{1}_{F_{j}}(x)\mathrm{d}\mathbb{P}\mathrm{d}x.$$
%记$\complement_{F}$ 为$F$的补集. 根据$\{F_j\}_{j\geq1}$的独立性有
Let $\complement_{F}$ be the complement of $F$. It follows from the independence of $\{F_j\}_{j\geq1}$ that for $x\in\mathbb{T}^d$,
\begin{align*}
\int\mathbbm{1}_{\complement_{F_{j}}}(x)\mathrm{d}\mathbb{P}
& =\int\prod_{k=n_{j-1}+1}^{n_{j}}(1-\mathbbm{1}_{B(\omega_k, \ell_{j+1})}(x))\mathrm{d}\mathbb{P}\\
&=\prod_{k=n_{j-1}+1}^{n_{j}}\int(1-\mathbbm{1}_{B(\omega_k, \ell_{j+1})}(x))\mathrm{d}\mathbb{P}\\
&=\prod_{k=n_{j-1}+1}^{n_{j}}(1-(2\ell_{n_{j+1}})^d)=\big(1-(2\ell_{n_{j+1}})^d\big)^{n_{j}-n_{j-1}}.
\end{align*}
Then
$$\int\mathbbm{1}_{F_{j}}(x)\mathrm{d}\mathbb{P}=1-\big(1-(2\ell_{n_{j+1}})^d\big)^{n_{j}-n_{j-1}},$$
which gives that
$$\mathbb{E}\big(\mu_{l,q}(\mathbb{T}^d)\big)=\prod_{j=l}^{q}\big(1-(1-(2\ell_{n_{j+1}})^d)^{n_{j}-n_{j-1}}\big)=K_{l,q}.$$

It follows from Lemma~\ref{lem41}~that
\begin{align*}
&\int\mathbbm{1}_{\complement_{F_{j}}}(x)\mathbbm{1}_{\complement_{F_{j}}}(y)\mathrm{d}\mathbb{P}\\
& =\prod_{k=n_{j-1}+1}^{n_{j}}\int\big(1-\mathbbm{1}_{B(\omega_k, \ell_{j+1})}(x)\big)\big(1-\mathbbm{1}_{B(\omega_k, \ell_{j+1})}(y)\big)\mathrm{d}\mathbb P \\
&\leq\left(1-2(2\ell_{n_{j+1}})^d+(2\ell_{n_{j+1}})^d\mathbbm{1}_{{B(0,2\ell_{n_{j+1}})}}(x-y)\right)^{n_{j}-n_{j-1}} \\
&=:\Phi_{j}(x-y).
\end{align*}

Therefore
\begin{align*}
\mathbb{E}\big(\mu_{l,q} (\mathbb{T}^d)^{2}\big)
&=\int\left(\int_{\mathbb{T}^d}\int_{\mathbb{T}^d}\prod_{j=l}^{q}\mathbbm{1}_{F_{j}}(x)\mathbbm{1}_{F_{j}}(y)\mathrm{d}x\mathrm{d}y\right)\mathrm{d}\mathbb{P} \\
&=\int_{\mathbb{T}^d}\int_{\mathbb{T}^d}\prod_{j=l}^{q}\int\big(1-\mathbbm{1}_{\complement_{F_{j}}}(x)\big)\big(1-\mathbbm{1}_{\complement_{F_{j}}}(y)\big)\mathrm{d}\mathbb{P}\mathrm{d}x\mathrm{d}y \\
&=\int_{\mathbb{T}^d}\int_{\mathbb{T}^d}\prod_{j=l}^{q}\int\big(1-\mathbbm{1}_{\complement_{F_{j}}}(x)-\mathbbm{1}_{\complement_{F_{j}}}(y)+\mathbbm{1}_{\complement_{F_{j}}}(x)\mathbbm{1}_{\complement_{F_{j}}}(y)\big)\mathrm{d}\mathbb{P}\mathrm{d}x\mathrm{d}y \\
&\leq\int_{\mathbb{T}^d}\int_{\mathbb{T}^d}\prod_{j=l}^{q}\big(1-2\big(1-(2\ell_{n_{j+1}})^d\big)^{n_{j}-n_{j-1}}+\Phi_{j}(x-y)\big)\mathrm{d}x\mathrm{d}y.
\end{align*}
%对于 $1-2\big(1-(2r_{n_{j+1}})^d\big)^{n_{j}-n_{j-1}}+\Phi_{j}(x-y)$, 如果$\mathbbm{1}_{B_{\mathbb{T}^d}(0,2r_{n_{j+1}})}(x-y)=0\,,$ 那么
If $\mathbbm{1}_{B(0, 2\ell_{n_{j+1}})}(x-y) = 0\,,$ then
\begin{multline*}
\text{1} -2\big(1-(2\ell_{n_{j+1}})^d\big)^{n_{j}-n_{j-1}}+\Phi_{j}(x-y) \\
=1-2\big(1-(2\ell_{n_{j+1}})^d\big)^{n_{j}-n_{j-1}}+\big(1-2(2\ell_{n_{j+1}})^d\big)^{n_{j}-n_{j-1}} \\
\leq1-2\big(1-(2\ell_{n_{j+1}})^d\big)^{n_{j}-n_{j-1}}+\big(1-(2\ell_{n_{j+1}})^d\big)^{2({n_{j}-n_{j-1}})} \\
=\left(1-\big(1-(2\ell_{n_{j+1}})^d\big)^{n_{j}-n_{j-1}}\right)^{2}.
\end{multline*}
If $\mathbbm{1}_{B(0,2\ell_{n_{j+1}})}(x-y)=1\,,$ then
\begin{multline*}
\text{1} -2\big(1-(2\ell_{n_{j+1}})^d\big)^{n_{j}-n_{j-1}}+\Phi_{j}(x-y) \\
=1-2\big(1-(2\ell_{n_{j+1}})^d\big)^{n_{j}-n_{j-1}}+\big(1-(2\ell_{n_{j+1}})^d\big)^{n_{j}-n_{j-1}} \\
=1-\big(1-(2\ell_{n_{j+1}})^d\big)^{n_{j}-n_{j-1}}.
\end{multline*}
Therefore
\begin{align*}
&\frac{1-2\big(1-(2\ell_{n_{j+1}})^d\big)^{n_{j}-n_{j-1}}+\Phi_{j}(x-y)}{\left(1-(1-(2\ell_{n_{j+1}})^d)^{n_{j}-n_{j-1}}\right)^{2}} \\
&\hspace{5ex}~\hspace{5ex}=\biggl(1+\frac{\big(1-(2\ell_{n_{j+1}})^d\big)^{n_{j}-n_{j-1}}}{1-\big(1-(2\ell_{n_{j+1}})^d\big)^{n_{j}-n_{j-1}}}\mathbbm{1}_{B(0,2\ell_{n_{j+1}})}(x-y)\biggr).
\end{align*}

Since 
\begin{align*}
\Psi_{l,q}(x-y)&=\prod_{j=l}^{q}\biggl(1+\frac{\big(1-(2\ell_{n_{j+1}})^d\big)^{n_{j}-n_{j-1}}}{1-\big(1-(2\ell_{n_{j+1}})^d\big)^{n_{j}-n_{j-1}}}\mathbbm{1}_{B(0,2\ell_{n_{j+1}})}(x-y)\biggr)\\
&=\frac{1}{K_{l,q}^2}\prod_{j=l}^{q}\biggl(1-2\big(1-(2\ell_{n_{j+1}})^d\big)^{n_{j}-n_{j-1}}+\Phi_{j}(x-y)\biggr),
\end{align*}
then
$$\mathbb{E}(\mu_{l,q}(\mathbb{T}^d)^{2})\leq K_{l,q}^{2}\int_{\mathbb{T}^d}\int_{\mathbb{T}^d}\Psi_{l,q}(x-y)\mathrm{d}x\mathrm{d}y.$$
\end{proof}

%对于$0 < s < d$, 定义$\mu$的$s$能量积分为
For $0 < s < d$,  recall that the $s$-potential at a point $x$ due to the measure  $\mu$ is defined by
$$\phi_{s,\mu}(x):=\int\Vert x-y\Vert^{-s}\mathrm{d}\mu(y),$$
and the $s$-energy of $\mu$ is given by
$$I_{s}(\mu):=\int\int\Vert x-y\Vert^{-s}\mathrm{d}\mu(x)\mathrm{d}\mu(y).$$
If $\mu$ is the Lebesgue measure $\lambda$, then 
\begin{equation}\label{estimateofenergy}
I_{s}:=I_{s}(\lambda)=\int_{\mathbb{T}^d}\int_{\mathbb{T}^d}\Vert x-y\Vert^{-s}\mathrm{d}x\mathrm{d}y=\frac{d\cdot 2^s}{d-s}<+\infty.
\end{equation}

By adapting the proof of Lemma 1.1 in Persson \cite{Persson19}, we obtain the following lemma.
\begin{lemma}\label{prop41}
Let $\mu$ be a finite Borel measure with $\emptyset\ne {\rm supp}\mu\subset \T^d$. Suppose that $0<s<d$, and $I_s(\mu)$  is finite. Define a Borel measure by
\[\eta(A)=\int_A\left(\phi_{s,\mu}(x)\right)^{-1}\mathrm{d}\mu(x)\]
for any Borel set $A$. Then the measure $\eta$ satisfies that $\eta(U)\le \Vert U\Vert^s$ for any Borel set $U$.
\end{lemma}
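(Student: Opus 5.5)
The plan is the standard mass-distribution argument behind Lemma 1.1 of Persson \cite{Persson19}. We read $\Vert U\Vert$ as the diameter of $U$ in the metric induced by $\Vert\cdot\Vert$. The idea is that on a set $U$ of small diameter the potential $\phi_{s,\mu}$ is at least $\Vert U\Vert^{-s}\mu(U)$. Integrating its reciprocal over $U$ against $\mu$ therefore costs at most $\Vert U\Vert^{s}$.

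\emph{Step 1: $\eta$ is well defined.} Every pair $x,y\in\mathbb{T}^d$ satisfies $\Vert x-y\Vert\le 1/2$. Hence $\phi_{s,\mu}(x)\ge 2^{s}\mu(\mathbb{T}^d)>0$ for all $x$, because $\mu$ is nonzero. Since $\int\phi_{s,\mu}\,\mathrm{d}\mu=I_s(\mu)<\infty$, we also have $\phi_{s,\mu}<\infty$ for $\mu$-a.e.\ $x$. On the remaining $\mu$-null set we use the convention $(\phi_{s,\mu})^{-1}=0$. Consequently $(\phi_{s,\mu})^{-1}$ is a nonnegative bounded Borel function, and $\eta$ is a finite Borel measure with $\eta\ll\mu$.

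\emph{Step 2: the trivial cases.} If $\mu(U)=0$, then $\eta(U)=0$ and there is nothing to prove. If $\Vert U\Vert=0$, then $U$ is at most a single point $\{x\}$. Suppose $\mu(\{x\})>0$. Then $\phi_{s,\mu}=\infty$ at $x$, and the pair $(x,x)$ contributes $\infty\cdot\mu(\{x\})^2$ to $I_s(\mu)$, which contradicts $I_s(\mu)<\infty$. Therefore $\mu(U)=0$ and we are back in the previous case. So we may assume $r:=\Vert U\Vert>0$ and $\mu(U)>0$.

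\emph{Step 3: the main estimate.} Take $x\in U$. Every $z\in U$ satisfies $\Vert x-z\Vert\le r$, so
\[
\phi_{s,\mu}(x)\ \ge\ \int_U \Vert x-z\Vert^{-s}\,\mathrm{d}\mu(z)\ \ge\ r^{-s}\mu(U).
\]
This gives $(\phi_{s,\mu}(x))^{-1}\le r^{s}/\mu(U)$ on $U$, which also holds trivially where $\phi_{s,\mu}=\infty$. Integrating over $U$,
\[
\eta(U)=\int_U(\phi_{s,\mu}(x))^{-1}\,\mathrm{d}\mu(x)\ \le\ \frac{r^{s}}{\mu(U)}\,\mu(U)=\Vert U\Vert^{s}.
\]

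No step is a genuine obstacle; the argument is essentially one inequality. The only care needed is in the measure-theoretic bookkeeping of Step 1, namely the finiteness and positivity of the potential, and in the degenerate cases of Step 2. The positivity of $\phi_{s,\mu}$ is what ensures that $\eta$ is not identically zero, and this is the property used later when $\eta$ is fed into the mass distribution principle.
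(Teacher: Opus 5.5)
Your proof is correct and is the same argument the paper relies on: the paper gives no proof of its own and cites Lemma~1.1 of Persson, whose proof is the estimate $\phi_{s,\mu}(x)\ge \Vert U\Vert^{-s}\mu(U)$ for $x\in U$, followed by integrating the reciprocal over $U$ to get $\eta(U)\le\Vert U\Vert^{s}$. One small slip, in your closing remark only: $\eta\not\equiv 0$ comes from $\phi_{s,\mu}<\infty$ $\mu$-a.e. (a consequence of $I_s(\mu)<\infty$), whereas the positivity of $\phi_{s,\mu}$ only ensures that $(\phi_{s,\mu})^{-1}$ is bounded, so that $\eta$ is finite.
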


\begin{lemma}\label{prop42}
Let $\epsilon> 0,$ $0 <\delta< 1,$ $\theta > 1$ and $n_j=\theta^j$. If $\ell_n=\frac{c}{n^{\frac{1}{d}}},$ and

$$c>\frac{1}{2}\big(-\frac{\theta^2}{\theta-1}\log(1-\theta^{-1})\big)^\frac{1}{d},$$
then for $l$ large enough, %不等式
\begin{align}\label{with1}
\delta\mathbb{E}\mu_{l,q}(\mathbb{T}^d)<\mu_{l,q}(\mathbb{T}^d)<(2-\delta)\mathbb{E}\mu_{l,q}(\mathbb{T}^d)
\end{align}
holds with probability  at least $1-\epsilon$.
%成立的概率至少为$1-\epsilon$.
\end{lemma}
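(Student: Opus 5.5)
The approach is a second-moment (Chebyshev) argument built on Lemma~\ref{lem45}. We have
\[
\mathbb{P}\bigl(|\mu_{l,q}(\mathbb{T}^d)-\mathbb{E}\mu_{l,q}(\mathbb{T}^d)|\ge(1-\delta)\mathbb{E}\mu_{l,q}(\mathbb{T}^d)\bigr)\le \frac{\mathrm{Var}\,\mu_{l,q}(\mathbb{T}^d)}{(1-\delta)^2K_{l,q}^2}.
\]
By Lemma~\ref{lem45} the right-hand side is at most
\[
(1-\delta)^{-2}\Bigl(\int_{\mathbb{T}^d}\int_{\mathbb{T}^d}\Psi_{l,q}(x-y)\,\mathrm{d}x\,\mathrm{d}y-1\Bigr).
\]
So it suffices to show that $\int\int(\Psi_{l,q}(x-y)-1)\,\mathrm{d}x\,\mathrm{d}y\to0$ as $l\to\infty$, uniformly in $q$. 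Then for $l$ large this quantity is below $\epsilon(1-\delta)^2$, and \eqref{with1} holds with probability at least $1-\epsilon$.

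To control $\Psi_{l,q}-1$, I will use Lemma~\ref{lem43} together with \eqref{estimateofenergy}. Set $p:=1-e^{-(2c)^d\frac{\theta-1}{\theta^2}}$, so that $s(c,\theta)=-d\log p/\log\theta$. The hypothesis on $c$ is exactly
\[
(2c)^d\frac{\theta-1}{\theta^2}>-\log(1-\theta^{-1}),
\]
that is, $e^{-(2c)^d(\theta-1)/\theta^2}<1-\theta^{-1}$, which gives $p>\theta^{-1}$ and hence $s(c,\theta)<d$. By Lemma~\ref{lem43},
\[
\Psi_{l,q}(t)-1\le C_l\Vert t\Vert^{-s},\qquad C_l=c^{s}p^{l}.
\]
Since $s<d$, integrating with \eqref{estimateofenergy} gives
\[
\int\int(\Psi_{l,q}(x-y)-1)\,\mathrm{d}x\,\mathrm{d}y\le C_lI_s=c^sp^l\frac{d2^s}{d-s},
\]
which tends to $0$ as $l\to\infty$ because $p<1$. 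The bound is uniform in $q$ because Lemma~\ref{lem43} does not depend on $q$.

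I expect the main obstacle to be the bookkeeping in Lemma~\ref{lem43}, in two places. First, $\Psi_{l,q}$ is $1$ outside a ball. Lemma~\ref{lem45} in fact produces the indicator of $B(0,2\ell_{n_{j+1}})$ rather than $B(0,\ell_{n_{j+1}})$, and this only changes the constant $C_l$ by a factor like $2^s$, so the decay $p^l\to0$ survives. Second, the bound $C_l\Vert t\Vert^{-s}$ must cover all small $t$, including $t=0$; this set has measure zero, so it does not affect the integral. I also need $\theta^j$ to be an integer or to replace it by $\lfloor\theta^j\rfloor$; this perturbs $n_j-n_{j-1}$ only by lower-order terms, which the strict inequality $p>\theta^{-1}$ absorbs for $l$ large. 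With these points checked, Chebyshev's inequality finishes the proof.
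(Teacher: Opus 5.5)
Your proposal is correct and is essentially the paper's own argument. It uses Chebyshev's inequality with $a=(1-\delta)\mathbb{E}\mu_{l,q}(\mathbb{T}^d)=(1-\delta)K_{l,q}$, the second-moment bound $\mathbb{E}(\mu_{l,q}(\mathbb{T}^d)^2)\le K_{l,q}^2\int\int\Psi_{l,q}(x-y)\,\mathrm{d}x\,\mathrm{d}y$, the estimate $\Psi_{l,q}(t)-1\le C_l\Vert t\Vert^{-s(c,\theta)}$, finiteness of $I_{s(c,\theta)}$ because the hypothesis on $c$ is equivalent to $s(c,\theta)<d$, and $C_l\to0$ uniformly in $q$. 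Your side remarks also correctly patch two points the paper passes over silently: the radius $\ell_{n_{j+1}}$ versus $2\ell_{n_{j+1}}$ mismatch between the definition of $\Psi_{l,q}$ and the second-moment computation only costs a factor $2^{s(c,\theta)}$ in $C_l$, and $\theta^j$ must be rounded to integers.
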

\begin{proof}
%对$c$的假设意味着 $s(c, \theta) < d\,$.根据引理~\ref{lem43}~和命题~\ref{prop41}~,可得

By  Lemma~\ref{lem43}~ and Lemma~\ref{prop41}~, we obtain that 
\begin{align*}
&\quad\,\, \mathbb{P}\big(|\mu_{l,q}(\mathbb{T}^d)-\mathbb{E}\mu_{l,q}(\mathbb{T}^d)|\geq a\big) \leq\frac{\mathbb{E}\big(\mu_{l,q}(\mathbb{T}^d)-\mathbb{E}\mu_{l,q}(\mathbb{T}^d)\big)^{2}}{a^{2}} \\
&=\frac{\mathbb{E}\big(\mu_{l,q}(\mathbb{T}^d)^{2}\big)-(\mathbb{E}\mu_{l,q}\big(\mathbb{T}^d)\big)^{2}}{a^{2}}\le \frac{K_{l,q}^2}{a^{2}}\left(\int\int\left(\Psi_{l,q}(x-y)-1\right)\mathrm{d}x\mathrm{d}y\right) \\
&\leq \frac{C_lK_{l,q}^2}{a^{2}}\left(\int\int \Vert x-y\Vert^{-s(c,\theta)}\mathrm{d}x\mathrm{d}y\right)=C_lI_{s(c,\theta)}\frac{(\mathbb{E}\mu_{l,q}\big(\mathbb{T}^d)\big)^{2}}{a^{2}}.
\end{align*}
%where
%$$D_{l}:=C_{l}\int_{\mathbb{T}^d}\int_{\mathbb{T}^d}\Vert x-y\Vert^{-s(c,\theta)}\mathrm{d}x\mathrm{d}y= C_{l} J_{s(c,\theta)}.$$
The assumption on $c$ implies that $s(c, \theta) < d\,$, then by inequality~\eqref{estimateofenergy}, there exists $M>0$ such that 
$$C_lI_{s(c,\theta)}<MC_l .$$

Take $a=(1-\delta){\mathbb{E}\mu_{l,q}(\mathbb{T}^d)}$, then
$$\mathbb{P}\Big(\delta<\frac{\mu_{l,q}(\mathbb{T}^d)}{\mathbb{E}\mu_{l,q}(\mathbb{T}^d)}<(2-\delta)\Big)\geq1-\frac{MC_l}{(1-\delta)^{2}}.$$
It follows from Lemma~\ref{lem43}~that $\lim\limits_{l \rightarrow \infty}C_l =0$, which gives this lemma. 
\end{proof}

\begin{lemma}\label{prop43}
Let $\theta > 1$ and $n_j=\theta^j$. If $\ell_n=\frac{c}{n^{\frac{1}{d}}},$  and
$$c>\frac{1}{2}\big(-\frac{\theta^2}{\theta-1}\log(1-\theta^{-1})\big)^\frac{1}{d},$$
Then for $0<s<d-s(c,\theta),$ 
$$\mathbb{E}\big(I_{s}(\mu_{l,q})\big)\leq K_{l,q}^{2}\left(I_s+C_{l}I_{s+s(c,\theta)}\right)<\infty.$$
\end{lemma}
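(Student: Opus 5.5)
We expand the energy and interchange expectation with the spatial integrals. By definition of $\mu_{l,q}$ as Lebesgue measure restricted to $\bigcap_{j=l}^qF_j$,
\[
I_s(\mu_{l,q})=\int_{\mathbb{T}^d}\int_{\mathbb{T}^d}\Vert x-y\Vert^{-s}\prod_{j=l}^q\mathbbm{1}_{F_j}(x)\mathbbm{1}_{F_j}(y)\,\mathrm{d}x\,\mathrm{d}y .
\]
Taking expectation and using Tonelli's theorem (everything is nonnegative), we get
\[
\mathbb{E}\big(I_s(\mu_{l,q})\big)=\int\int\Vert x-y\Vert^{-s}\,\mathbb{E}\Big(\prod_{j=l}^q\mathbbm{1}_{F_j}(x)\mathbbm{1}_{F_j}(y)\Big)\mathrm{d}x\,\mathrm{d}y .
\]
The inner expectation is exactly the pointwise quantity already estimated in the proof of Lemma~\ref{lem45}. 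By independence of the $F_j$ for distinct $j$, together with Lemma~\ref{lem41} applied to each $\omega_k$, it is bounded by $\prod_{j}\big(1-2(1-(2\ell_{n_{j+1}})^d)^{n_j-n_{j-1}}+\Phi_j(x-y)\big)=K_{l,q}^2\Psi_{l,q}(x-y)$. So we will simply reuse that pointwise bound, now integrated against the weight $\Vert x-y\Vert^{-s}$ instead of against $1$.

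Next we insert Lemma~\ref{lem43}, which gives $\Psi_{l,q}(x-y)\le 1+C_l\Vert x-y\Vert^{-s(c,\theta)}$. One small point needs care: Lemma~\ref{lem45} uses indicators of balls of radius $2\ell_{n_{j+1}}$, while $\Psi_{l,q}$ in Lemma~\ref{lem43} is written with radius $\ell_{n_{j+1}}$. We will check that the argument of Lemma~\ref{lem43} goes through with the doubled radius; this only changes the constant $C_l$ by a factor like $2^{s(c,\theta)}$, which is harmless. This yields
\[
\mathbb{E}\big(I_s(\mu_{l,q})\big)\le K_{l,q}^2\Big(\int\int\Vert x-y\Vert^{-s}\,\mathrm{d}x\,\mathrm{d}y+C_l\int\int\Vert x-y\Vert^{-(s+s(c,\theta))}\,\mathrm{d}x\,\mathrm{d}y\Big)=K_{l,q}^2\big(I_s+C_lI_{s+s(c,\theta)}\big).
\]

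Finiteness follows from \eqref{estimateofenergy}, which says $I_t=\frac{d\,2^t}{d-t}<\infty$ for every $0\le t<d$. The assumption on $c$ gives $s(c,\theta)<d$, because
\[
c>\tfrac12\big(-\tfrac{\theta^2}{\theta-1}\log(1-\theta^{-1})\big)^{1/d}
\iff e^{-(2c)^d\frac{\theta-1}{\theta^2}}<1-\theta^{-1}
\iff -\log\big(1-e^{-(2c)^d\frac{\theta-1}{\theta^2}}\big)<\log\theta .
\]
Hence the range $0<s<d-s(c,\theta)$ is nonempty, and on it both exponents $s$ and $s+s(c,\theta)$ lie strictly below $d$, so both energies are finite.

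The only delicate step is making sure the pointwise bound from Lemma~\ref{lem45}, including the ball radius convention, matches the hypothesis of Lemma~\ref{lem43}. Everything else is Tonelli's theorem and the explicit formula \eqref{estimateofenergy}.
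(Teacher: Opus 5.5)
Your proposal is correct and follows the paper's proof essentially line by line: Tonelli, the pointwise bound $K_{l,q}^2\Psi_{l,q}(x-y)$ taken from the proof of Lemma~\ref{lem45}, then Lemma~\ref{lem43}, then finiteness from \eqref{estimateofenergy}. The radius mismatch you flag is really in the paper, which silently identifies the radius-$2\ell_{n_{j+1}}$ product with $\Psi_{l,q}$. Your fix is right: rerunning Lemma~\ref{lem43} with radius $2\ell_{n_{j+1}}$ turns $C_l$ into $2^{s(c,\theta)}C_l$. That constant is still bounded uniformly in $q$ and tends to $0$ as $l\to\infty$, which is all the later argument uses, so your final display should carry $2^{s(c,\theta)}C_l$ rather than $C_l$ itself.
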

\begin{proof}
Since $c>\frac{1}{2}\big(-\frac{\theta^2}{\theta-1}\log(1-\theta^{-1})\big)^\frac{1}{d},$ we have  $s+s(c,\theta)<d$. It follows from Lemma \ref{lem43} and the proof of Lemma \ref{lem45} that 
\begin{align*}
\mathbb{E}\big(I_{s}(\mu_{l,q})\big)&=\mathbb{E}\left(\int\int\Vert x-y\Vert^{-s}\mathrm{d}\mu_{l,q}(x)\mathrm{d}\mu_{l,q}(y)\right)\\
&=\mathbb{E}\left(\int\int\Vert x-y\Vert^{-s}\prod_{j=l}^q1_{F_j}(x)1_{F_j}(y)\mathrm{d}x\mathrm{d}y\right)\\
& \leq K_{l,q}^{2}\int_{\mathbb{T}^d}\int_{\mathbb{T}^d}\Vert x-y\Vert^{-s}\Psi_{l,q}(x-y)\mathrm{d}x\mathrm{d}y \\
&\leq K_{l,q}^{2}\int_{\mathbb{T}^d}\int_{\mathbb{T}^d}\Vert x-y\Vert^{-s}\left(1+C_{l}\Vert x-y\Vert^{-s(c,\theta)}\right)\mathrm{d}x\mathrm{d}y\\
&=K_{l,q}^{2}(I_{s}+C_{l}I_{s+s(c,\theta)})< \infty.
%&=K_{l,q}^{2}C_{l}\frac{d\cdot 2^{\big(s+s(c,\theta)\big)}}{d-\big(s+s(c,\theta)\big)}< \infty.
\end{align*}
\end{proof}

There follows the proof of the lower bound on the Hausdorff dimension of $\mathcal{U}(\omega,\ell)$ in Theorem~\ref{thm41}.%下面是关于定理~\ref{thm41}~中一致随机覆盖集 Hausdorff 维数下界的证明.
\begin{proof}[Proof of the lower bound in Theorem~{\normalfont\ref{thm41}}]

Since $$\inf_{\theta>1}\frac{1}{2}{\big(-\frac{\theta^2}{\theta-1}\log(1-\theta^{-1})\big)^\frac{1}{d}}=\frac{1}{2},$$ the condition that $c>\frac{1}{2}$ is equivalent to 
\begin{equation}\label{condition:c}
c>\inf_{\theta>1}\frac{1}{2}{\big(-\frac{\theta^2}{\theta-1}\log(1-\theta^{-1})\big)^\frac{1}{d}},
\end{equation}
which is also  equivalent to
$$\sup_{\theta>1}\left(d-s(c, \theta)\right)=\sup_{\theta>1}\left(d+\frac{d\log\big(1-\exp{(-(2c)^d\frac{\theta-1}{\theta^2})}\big)}{\log\theta}\right)>0\,.$$
%{\color{red}Assume that  $\theta$ satisfies 
%\begin{align}\label{to1}
 % c>\frac{\big(-\frac{\theta^2}{\theta-1}\log(1-\theta^{-1})\big)^\frac{1}{d}}{2},
%\end{align}
% equivalent to
%$$d-\frac{-d\log\big(1-e^{(-(2c)^d\frac{\theta-1}{\theta^2})}\big)}{\log\theta}>0\,.$$}
%不失一般性成立.
%由~\eqref{to1}~式, 可以推出 $s(c, \theta) < d$ ,取$s \in \big(0, d -s(c, \theta)\big)$. 设$0 <\epsilon< \frac{1}{2}$ 且 $\delta > 0$.令 $n_j = \theta^j$. 由于$s+s(c,\theta)<d,$ 这意味着 $J_{s+s(c,\theta)}$ 是有限的. 根据Markov不等式和命题~\ref{prop43}~, 对于任意的$a>0$有
Note that $s(c, \theta)$ decreases as $\theta$ increases, then we  deduce that there exists $\theta_0>1$ such that for all $\theta>\theta_0$,
\[d-s(c, \theta)>0.\]
Take $\theta>\theta_0$. Put $n_j = \theta^j$. Let $s \in \big(0, d -s(c, \theta)\big)$, which  implies that $I_{s}$ and $I_{s + s(c, \theta)}$ are finite.
 By the Markov inequality and Lemma ~\ref{prop43}, for any $a > 0$, we have
$$\mathbbm{P}\big(I_{s}(\mu_{l,q})\geq a(\mathbb{E}\mu_{l,q})^{2}\big)\leq\frac{\mathbb{E}I_{s}(\mu_{l,q})}{a(\mathbb{E}\mu_{l,q})^{2}}\leq\frac{I_s+C_{l}I_{s+s(c,\theta)}}{a}.$$

 Let $0 <\varepsilon< \frac{1}{2}$ and $\delta > 0$. Take $a > 0$ with $\frac{I_s+C_{l}I_{s+s(c,\theta)}}{a}<\varepsilon$. Then
$$\mathbb{P}\big(I_{s}(\mu_{l,q})<a(\mathbb{E}\mu_{l,q})^{2}\big)\geq1-\varepsilon.$$
Since inequality \eqref{condition:c} holds, for $l$ large enough, it follows from Lemma~\ref{prop42}~that
\begin{align}\label{to2}
I_{s}(\mu_{l,q})<a(\mathbb{E}\mu_{l,q})^{2}\quad\text{and}\quad\delta<\frac{\mu_{l,q}(\mathbb{T}^d)}{\mathbb{E}\mu_{l,q}(\mathbb{T}^d)}<2-\delta.
\end{align}
holds with probability at least $1-2\varepsilon$.
Therefore for any $q > l$, 
\begin{align}\label{with2}
  \mathbb{P}\left(I_{s}(\mu_{l,q})<\frac{a}{\delta^{2}}\big(\mu_{l,q}(\mathbb{T}^d)\big)^{2}\right)\geq1 -2\varepsilon.
\end{align}
Take $l$ such that ~\eqref{with2}~holds, then
\begin{align*}
  &\mathbb{P}\left(\bigcap_{N=l+1}^{\infty}\bigcup_{q=N}^{\infty}\left(I_{s}(\mu_{l,q})<\frac{a}{\delta^{2}}\big(\mu_{l,q}(\mathbb{T}^d)\big)^{2}\right)\right)\\
  &=\lim_{N\rightarrow\infty}\mathbb{P}\left(\bigcup_{q=N}^{\infty}\left(I_{s}(\mu_{l,q})<\frac{a}{\delta^{2}}\big(\mu_{l,q}(\mathbb{T}^d)\big)^{2}\right)\right)\\
  &\geq1 -2\varepsilon.
\end{align*}
It means that with probability at least $1 -2\varepsilon$, there is a subsequence $(q_i)_i\subset \mathbb{N}$, such that%there exists $\Omega_1\subset \Omega$ such that $\mathbb{P}(\Omega_1)\geq 1-2\epsilon$, and  for all $(\omega_k)_{k\geq 1}\subset\Omega_1$, there exists a sequence $\{q_j\}_{j\geq1}$ such that 
\begin{align} \label{estimate:ismu}
I_{s}(\mu_{l,q_{i}})<\frac{a}{\delta^{2}}\big(\mu_{l,q_{i}}(\mathbb{T}^d)\big)^{2},\quad\forall\, j\geq 1.
\end{align}
%假设 $\{\omega_k\}_{k\geq 1}$ 使得存在序列 $\{q_i\}_{i\geq 1}$. 标准化 $\mu_{l,q_i}$ 定义
Then we normalize the measure $\mu_{l,q_i}$ as 
$$\nu_{l,q_{i}}=\frac{\mu_{l,q_{i}}}{\mu_{l,q_{i}}(\mathbb{T}^d)},$$
which is a probability measure. And we define a measure $\eta_{l,q_{i}}$  by
$$\eta_{l,q_{i}}(U)=\int_U(\phi_{s,\nu_{l,q_{i}}}(x))^{-1}\mathrm{d}(x)$$
for any Borel set $U$. Applying Lemma~\ref{prop41}, we have%是Riesz位势. 根据定理~\ref{thm02}~有
$$\eta_{l,q_{i}}(U)\leq\Vert U\Vert^{s}.$$

It follows from Jensen inequality and inequality \eqref{estimate:ismu} that 
\begin{equation*}
\begin{split}
\eta_{l,q_{i}}(\mathbb{T}^d)&=\int_{\mathbb{T}^d}(\phi_{s,\nu_{l,q_{i}}}(x))^{-1}\mathrm{d}\nu_{l,q_{i}}(x)\\
&\geq \left(\int_{\mathbb{T}^d}\int_{\mathbb{T}^d}\Vert x-y\Vert^{-s}\mathrm{d}\nu_{l,q_{i}}(y)\mathrm{d}\nu_{l,q_{i}}(x)\right)^{-1}\\
&=(\mu_{l,q_{i}}(\mathbb{T}^d))^2\left(\int_{\mathbb{T}^d}\int_{\mathbb{T}^d}\Vert x-y\Vert^{-s}\mathrm{d}\mu_{l,q_{i}}(y)\mathrm{d}\mu_{l,q_{i}}(x)\right)^{-1}\\
&=\frac{(\mu_{l,q_{i}}(\mathbb{T}^d))^2}{I_{s}(\mu_{l,q_{i}})}\geq\frac{\delta^{2}}{a}.
\end{split}
\end{equation*}

Recall that  $\{U_k\}_{k\geq1}$ is an open covering of $F$ defined in \eqref{define:F}, 
%那么对于任意$l$, 特别是足够大被选择的 $l$使得$\{U_k\}_{k\geq1}$覆盖$$\bigcap_{j=l}^{\infty}F_{j},$$ 因为 $F_j$ 是紧集, 存在 $i$ 使得$$\bigcup_{k}U_{k}\supset\bigcap_{j=l}^{q_{i}}F_{j}.$$
then for any $l$, $\{U_k\}_{k\geq1}$ also covers $\bigcap_{j=l}^{\infty}F_{j}.$
Since $\{F_j\}_j$ are  compact, there exists an $i$ such that
$$\bigcap_{j=l}^{q_{i}}F_{j}\subset \bigcup_{k}U_{k}.$$

Since  $\{U_k\}_{k\geq1}$ covers the support of $\eta_{l,q_{i}}$, we deduce that
$$\sum_{k}\Vert U_{k}\Vert^{s}\geq\sum_{k}\eta_{l,q_{i}}(U_{k})=\eta_{l,q_{i}}(\mathbb{T}^d)\geq\frac{\delta^{2}}{a},$$
which implies that $\mathcal{H}^s\left(\bigcap_{j=l}^{q_{i}}F_{j}\right)\geq \frac{\delta^{2}}{a}$. Thus with  probability at least $1-2\varepsilon$, $\mathrm{dim}_{\mathrm{H}}F \geq s$ holds. 
Let $s$ tend to $d-s(c, \theta)$ , then with  probability at least $1-2\varepsilon$, 
$$\mathrm{dim}_{\mathrm{H}}F \geq d-s(c, \theta).$$

Recall that $F\subset \mathcal{U}\left(\omega,\left(\frac{c}{n^{\frac{1}{d}}}\right)\right)$. By the arbitrariness of $\varepsilon$, it follows that almost surely 
%$\epsilon $ 的任意性以及$\mathcal{U}\left(\omega,\left\{\frac{c}{n^{\frac{1}{d}}}\right\}\right)\supset F$, 可以推出几乎必然有
$$\dim_{\mathrm{H}}\mathcal{U}\left(\omega,\left(\frac{c}{n^{\frac{1}{d}}}\right)\right)\geq\sup\limits_{\theta>1}(d-s(c,\theta))=\sup\limits_{\theta>1}\left(d-\frac{-d\log\big(1-e^{(-(2c)^d\frac{\theta-1}{\theta^2})}\big)}{\log\theta}\right).$$

\end{proof}

\subsection*{Acknowledgements}We thank JunJie Huang for useful discussions.

\bibliographystyle{amsplain}

\end{document}